\numberwithin{equation}{section}
\newtheorem{theorem}{Theorem}[section]
\newtheorem{lemma}[theorem]{Lemma}
\newtheorem{corollary}[theorem]{Corollary}
\newtheorem{proposition}[theorem]{Proposition}
\theoremstyle{definition}
\theoremstyle{remark}
\newtheorem{remark}[theorem]{Remark}
\newtheorem{example}[theorem]{Example}
\newcommand{\calO}{\mathcal{O}}
\newcommand{\PP}{\mathbb{P}}
\newcommand{\Z}{\mathbb{Z}}
\newcommand{\Jac}{\mathrm{Jac}}
\newcommand{\bbZ}{\mathbb{Z}}
\DeclareMathOperator{\Aut}{Aut}
\DeclareMathOperator{\Pic}{Pic}
\DeclareMathOperator{\Num}{Num}
\def\I{{\mathrm{I}}}
\def\II{{\mathrm{II}}}
\def\III{{\mathrm{III}}}
\def\IV{{\mathrm{IV}}}
\setlist[enumerate,1]{label={\rm(\roman*)}, ref={\rm\roman*}} 
\newcounter{foo}
\newcommand{\otherlabel}[2]{\protected@edef\@currentlabel{#2}\label{#1}}
\def\lra{\longrightarrow}
\newcommand{\supth}[1]{\ensuremath{#1^{\mathrm{th}}}}
\title{Normal forms for quasi-elliptic Enriques surfaces and applications}
\author{Toshiyuki Katsura}
\address{Graduate School of Mathematical Sciences, The University of Tokyo, 
Meguro-ku, Tokyo 153-8914, Japan}
\email{tkatsura@g.ecc.u-tokyo.ac.jp}
\author{Matthias Sch\"utt}
\address{Institut f\"ur Algebraische Geometrie, 
Leibniz Universit\"at  Hannover, Welfengarten 1, 30167 Hannover, Germany\\
Riemann Center for Geometry and Physics,
Leibniz Universit\"at Hannover, Appelstrasse 2, 30167 Hannover, Germany}
\email{schuett@math.uni-hannover.de}
\begin{document}


\maketitle

\begin{prelims}

\DisplayAbstractInEnglish

\bigskip

\DisplayKeyWords

\medskip

\DisplayMSCclass

\end{prelims}


\newpage

\setcounter{tocdepth}{1}

\tableofcontents


\section{Introduction}

Enriques surfaces are an important class of algebraic surfaces.  Their historical fame is due to the property that outside characteristic $2$, their irregularity and geometric genus both vanish although they are not rational, which subsequently motivated Castelnuovo's rationality criterion.

Characteristic $2$ is special in this context because it features different types of Enriques surfaces; see \cite{BM}.  This is our motivation to focus onto a yet more special class of Enriques surfaces, namely those admitting a quasi-elliptic fibration; here the general fibre is a cuspidal cubic.

Despite their special nature, quasi-elliptic Enriques surfaces are central for our understanding of several aspects of the theory of Enriques surfaces over an algebraically closed field $k$ of characteristic $2$.  We are therefore convinced that our main result, stated below, will have many interesting consequences, a few of which we will work out in this paper as well.

\begin{theorem}
\label{thm}
Any quasi-elliptic Enriques surface is given by an affine equation of the following form, where each polynomial $a_i\in k[t]$ is of degree at most $i$:
\begin{enumerate}
\item\label{thm-1}
If $S$ is classical, then 
\begin{eqnarray}
\label{eq:classical}
S: \;\; y^2 + t^2 a_1 y = tx^4 + t^3 a_0 x^2 + t^3 a_2 x + t^3 (1+t)^4.
\end{eqnarray}
\item\label{thm-2}
If $S$ is supersingular, then  
\begin{eqnarray}
\label{eq:supersingular}
S: \;\;y^2  + t^4 a_1 y = tx^4 + t^5 a_0 x^2 + t^6 a_2 x + t^3.
\end{eqnarray}
\end{enumerate}
In each case, we only require that $(a_1, a_2)\not\equiv (0,0)$.
\end{theorem}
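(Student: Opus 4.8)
The plan is to start from the quasi-elliptic genus-one fibration carried by $S$, reduce its generic fibre to an explicit Artin--Schreier model over $k(t)$, and then pin down the coefficients using the numerical invariants of an Enriques surface together with the structure of its multiple fibres; the split into \eqref{eq:classical} and \eqref{eq:supersingular} will come from the two possible multiple-fibre configurations.

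First I would record that $S$ carries a quasi-elliptic fibration $\pi\colon S\to\bbP^1=\bbP^1_t$ whose generic fibre $C$ is a regular curve of arithmetic genus one over $K=k(t)$ that is geometrically a cuspidal cubic. The section of cusps provides a $K$-rational point $P$, and by Riemann--Roch the pencil $|2P|$ gives a degree-two map $C\to\bbP^1_x$, which in characteristic $2$ is an Artin--Schreier cover
\[
C:\quad y^2 + c(t)\,y = \sum_{i=0}^{4} b_i(t)\,x^i,\qquad c,\,b_i\in K .
\]
The decisive point is that quasi-ellipticity forces $C$ to be regular but geometrically non-smooth: the affine cover is everywhere smooth since $\partial_y=c\neq0$, so the cusp must sit at $x=\infty$, and computing the different shows it becomes rational only over the purely inseparable extension $k(t^{1/2})$. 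Equivalently, the leading coefficient $b_4$ must be a non-square in $K$, which up to the scaling $x\mapsto u^2x$ I would normalize to $b_4=t$; this is the origin of the term $tx^4$ common to both cases, while $x\mapsto x+\lambda$ removes the $x^3$-term.

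Next I would read off the admissible degrees. Spreading the model out over $\bbP^1_t$ and passing to the relatively minimal model, the requirement that $S$ be an Enriques surface ($\chi(\calO_S)=1$, $b_2=10$, $K_S\equiv0$) bounds the pole orders of $c$ and the $b_i$ through the canonical bundle formula
\[
K_S \sim \pi^*\!\big(K_{\bbP^1}+L\big)+\sum_j (m_j-1+\delta_j)\,F_j,\qquad \deg L=\chi(\calO_S)=1,
\]
and yields the normalization $\deg a_i\le i$. Here the two cases separate by the multiple-fibre structure, which for quasi-elliptic Enriques surfaces reflects the difference between classical and supersingular: a classical $S$ has two tame half-fibres ($\delta_j=0$), whereas a supersingular $S$ has a single wild half-fibre ($\delta\neq0$) at which all the vanishing is concentrated. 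This dichotomy is exactly what produces the constant term $t^3(1+t)^4$ (several distinguished fibres) in \eqref{eq:classical} against the concentrated constant term $t^3$ in \eqref{eq:supersingular}, and it dictates the different $t$-powers multiplying $a_0,a_1,a_2$.

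Finally I would carry out the normalization: using $\mathrm{PGL}_2$ on the base I fix the distinguished fibres at $t=0,1,\infty$, and using the characteristic-$2$ Weierstrass-type transformations ($x\mapsto u^2x+r(t)$, $y\mapsto u^3y+s(t)x+g(t)$, with Artin--Schreier shifts absorbing squares into the right-hand side) I bring the remaining coefficients to the stated shape, checking that no admissible transformation kills $a_1$ and $a_2$ simultaneously; the latter gives the condition $(a_1,a_2)\not\equiv(0,0)$, whose failure degenerates $S$ to a rational surface. I expect the main obstacle to lie in the supersingular case: the single wild half-fibre requires a careful determination of the conductor $\delta$ and of which monomials survive the Artin--Schreier shifts, and one must verify that the resulting minimal model is genuinely an Enriques surface with $\Pic^\tau=\alpha_2$ rather than a rational or K3-like degeneration. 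Controlling these characteristic-$2$ coordinate changes, where most classical simplifications are unavailable, and confirming the form is canonical up to the residual finite symmetry will be the most delicate part.
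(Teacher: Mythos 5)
Your proposal has a genuine gap at its central step, and it sits exactly where the paper has to do its hardest work. You propose to ``read off the admissible degrees'' by bounding pole orders of the coefficients through the canonical bundle formula together with the tame/wild dichotomy for the half-fibres. But the canonical bundle formula applies to the smooth relatively minimal model, while your Artin--Schreier model has, along the candidate multiple fibres, singularities that are \emph{not} rational: resolving them takes several blow-ups along curves followed by contractions of the resulting $(-1)$- and $(-2)$-curves (Sections~\ref{s:res}--\ref{s:canon}), and only after tracking the $2$-form through this explicit resolution (Propositions~\ref{prop:sing} and~\ref{prop:can}) can one relate $K_S\equiv 0$ to the coefficients. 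There is no shortcut from the formula to the divisibilities $t^2a_1$, $t^3a_0$, $t^3a_2$, $t^3(1+t)^4$, resp.\ $t^4a_1$, $t^5a_0$, $t^6a_2$, $t^3$. The paper obtains these from a mechanism your proposal never invokes: by \cite{BM} the relative Jacobian of any genus-$1$ fibration on an Enriques surface is a \emph{rational} surface; writing down its Weierstrass form, rationality forces the degree-$56$ discriminant to be divisible by $g^{12}$ for a quartic $g$, and a characteristic-$2$ formal-derivative trick converts this into $g^2\mid a_9$, $g\mid a_4$, $g^3\mid a_{14}$ (Lemma~\ref{lem:div}); the count of minimal roots of $g$ (two simple ones vs.\ one double one) is then what separates classical from supersingular. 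You do flag the resolution/conductor analysis as ``the most delicate part'', but that part \emph{is} the proof, not a loose end. Relatedly, you never establish that the coefficients are polynomials of bounded degree at all (the paper gets this from Riemann--Roch on the surface for the linear system $|4C+16F_\infty|$, not from the generic fibre), nor do you prove the converse direction that the normal forms actually define Enriques surfaces of the asserted type.

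There are also three concrete characteristic-$2$ errors. First, the curve of cusps is not a section: it is an inseparable bisection, so $P$ is a degree-$2$ point with residue field $k(t^{1/2})$; indeed, if the fibration had a $K$-rational point it would have no multiple fibres and $S$ could not be an Enriques surface (the paper uses exactly this to eliminate a case in Section~\ref{ss:separable}), and your own later remark that the cusp becomes rational only over $k(t^{1/2})$ contradicts your claim that $P$ is $K$-rational. Second, in characteristic $2$ the substitution $x\mapsto x+\lambda$ leaves the coefficient of $x^3$ unchanged (all relevant binomial coefficients are even), so the $x^3$-term cannot be removed by translation; its vanishing must instead be \emph{derived} from the requirement that the unique non-smooth point sits at $x=\infty$. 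Third, your smoothness argument ``$\partial_y=c\neq 0$'' presupposes the separable case $c\not\equiv 0$; the inseparable case $c\equiv 0$ (i.e.\ $a_1\equiv 0$, permitted by the theorem whenever $a_2\not\equiv 0$) genuinely occurs --- for instance the family \eqref{eq:ct} --- and requires a separate regularity analysis.
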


One can also give a combined equation covering both cases to view the supersingular case as a specialization of the classical one; see \eqref{eq:general'} below or \eqref{eq:general} in Theorem~\ref{rem:both}.  These equations are very useful for explicit computations, similarly to the Weierstrass form of an elliptic surface with section.  We shall demonstrate this with three major applications.

Our first application concerns the Enriques torsors above a given rational quasi-elliptic surface $X$. 

\begin{theorem}
\label{thm2}
A general rational quasi-elliptic surface $X$ with section admits an irreducible $4$-dimensional family of torsors of classical Enriques surfaces and an irreducible $3$-dimensional family of torsors of supersingular Enriques surfaces.

More precisely, given any $X$, the families of Enriques torsors have dimension $1$ $($resp.\ $2)$ less than in the general case if and only if $X$ has only two $($resp.\ one$)$ reducible fibre, except for the supersingular Enriques surfaces with a single reducible fibre of type $\I_4^*$, which depend on two moduli.

Explicitly, if $X$ is given by the Weierstrass form
\[
X: \;\; y^2 = x^3 + t(a_1^2+ta_0^2) \, x + t a_2^2, \quad a_0\in k[t], \;\; \deg(a_i)\leq i,
\]
then the Enriques torsors are given by 
\begin{eqnarray}
\label{eq:general'}
y^2 + g_2^2 a_1 y = tx^4 + t g_2^2 a_0 x^2 + g_2^3 a_2 x + t^3 c_1^4,
\end{eqnarray}
where $c_1, g_2\in k[t]\setminus\{0\}$ have degrees $\deg(c_1)\leq 1$ and $\deg(g_2)\leq 2$ and $c_1\nmid g_2$ $($and $\deg(g_2)=2$ if $\deg(c_1)=0)$.
\end{theorem}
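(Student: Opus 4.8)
The plan is to derive Theorem~\ref{thm2} from the normal forms of Theorem~\ref{thm} in three stages: recognizing \eqref{eq:general'} as a common generalization, computing its Jacobian, and counting moduli. First I would check that \eqref{eq:general'} specializes to the two normal forms: putting $(c_1,g_2)=(1+t,t)$ recovers the classical equation \eqref{eq:classical}, while $(c_1,g_2)=(1,t^2)$ recovers the supersingular equation \eqref{eq:supersingular}. Conversely, the degree bounds $\deg c_1\le 1$ and $\deg g_2\le 2$ are precisely what keep every coefficient of \eqref{eq:general'} within the bounds $\deg a_i\le i$ and give the right pole orders at $t=0$ and $t=\infty$ for the total space to be a minimal Enriques surface; I would record $c_1\nmid g_2$, together with $\deg g_2=2$ when $c_1$ is constant, as the admissibility conditions separating the classical from the supersingular regime.

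The technical heart is to show that the Jacobian of the genus-one generic fibre of \eqref{eq:general'} is the fixed Weierstrass fibration $X$, independently of $c_1$ and $g_2$. Over $K=k(t)$ that fibre is the quartic $y^2+g_2^2a_1\,y=tx^4+tg_2^2a_0\,x^2+g_2^3a_2\,x+t^3c_1^4$; according to whether $a_1$ vanishes, this is an Artin--Schreier or a purely inseparable double cover of the $x$-line, and in either case I would run the characteristic-$2$ reduction to Weierstrass form (the generalized-Jacobian construction) and verify that $c_1$ and $g_2$ drop out of the resulting invariants, leaving exactly $y^2=x^3+t(a_1^2+ta_0^2)x+ta_2^2$. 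Granting this, each surface \eqref{eq:general'} is a torsor over $X$; combined with the first stage, and with the fact that admissible Weierstrass transformations of $X$ can be used to normalize the $a_i$, Theorem~\ref{thm} then shows that every Enriques torsor over a fixed $X$ is isomorphic to some \eqref{eq:general'}.

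Next I would pin down the identifications and count. Rescaling the fibre coordinates by $(x,y)\mapsto(rx,r^2y)$ fixes $X$ and sends $(c_1,g_2)\mapsto(c_1/r,g_2/r)$, so the torsors over a fixed $X$ are parametrized by $(c_1,g_2)$ up to the common scaling $(c_1,g_2)\sim\lambda(c_1,g_2)$. In the classical case this gives $2+3-1=4$ moduli, and in the supersingular case (where $c_1$ is constant, hence normalizable to $1$, and $\deg g_2=2$) exactly $3$, matching the general-position statement. It remains to match the dimension drops to the reducible fibres of $X$: I would read these off from the factorizations of $t(a_1^2+ta_0^2)$ and $ta_2^2$ together with the behavior at $t=\infty$, obtaining the generic configuration of three reducible fibres and its degenerations to two and to one. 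Since a surface \eqref{eq:general'} is a genuine Enriques surface only when its half-fibres are compatibly supported at the reducible fibres of $X$, fewer reducible fibres impose additional constraints on $(c_1,g_2)$ (or enlarge the identifications), and tracking how many parameters survive should yield the drop by $1$ for two reducible fibres and by $2$ for one.

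The main obstacle I anticipate is the characteristic-$2$ Jacobian computation of the middle stage: completing the Artin--Schreier square and extracting a Weierstrass model is delicate in characteristic $2$, and it is exactly there that the coefficients $t(a_1^2+ta_0^2)$ and $ta_2^2$ must appear on the nose with $c_1,g_2$ cancelling. A secondary difficulty is the exceptional configuration singled out in the statement: the supersingular surfaces whose Jacobian has a single reducible fibre of type $\I_4^*$ would have to be examined directly to see that two moduli persist rather than the single modulus the naive count would predict.
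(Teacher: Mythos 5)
Your three-stage skeleton (recognize \eqref{eq:general'} via the uniform normal form, show the Jacobian is $X$ with $c_1,g_2$ cancelling, then count parameters) is the same as the paper's, and the first two stages are sound. In fact the step you single out as the ``technical heart'' is less delicate than you fear: by Queen's theorem the relative Jacobian of \eqref{eq:general'} is obtained by simply deleting the constant term $t^3c_1^4$, so $c_1$ disappears at once, and the computation of Lemma~\ref{lem:jac} followed by minimalizing at the roots of $g_2$ (equation \eqref{eq:Jac-minimal}) cancels $g_2$, leaving exactly $Y^2=X^3+(a_1^2t+a_0^2t^2)\,X+a_2^2t$.

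The genuine gap is in your third stage, \textit{i.e.}~precisely the ``more precisely'' part of the theorem. Your primary mechanism --- that \eqref{eq:general'} ``is a genuine Enriques surface only when its half-fibres are compatibly supported at the reducible fibres of $X$'', so that fewer reducible fibres would cut down the admissible $(c_1,g_2)$ --- is false: the half-fibres sit at the roots of $g_2$, which are completely unconstrained by where $X$ degenerates, and by Theorem~\ref{rem:both} \emph{every} admissible pair $(c_1,g_2)$ yields an Enriques surface, for every $X$. The correct mechanism is the one you relegate to a parenthesis: enlarged identifications. One needs a rigidity statement (Lemmas~\ref{lem:iso} and~\ref{lem:iso2}) saying that, beyond finitely many symmetries, the only identifications among the surfaces \eqref{eq:general'} over a fixed $X$ are the common scaling of $(c_1,g_2)$ and those induced by M\"obius transformations of the base preserving $X$; without this even your general-case counts $4$ and $3$ are unjustified overcount candidates. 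Since such M\"obius transformations must permute the reducible fibres of $X$, they form a finite set when there are at least three reducible fibres, while with two (resp.\ one) reducible fibre they generically contribute a $1$- (resp.\ $2$-) parameter group of identifications --- whence the drops. This also shows why your approach cannot produce the $\I_4^*$ exception: there the drop is only $1$ because $t\mapsto\lambda t$ admits no compensating rescaling of $x,y$ preserving Ito's equation $y^2=x^3+t^2x+t^5$ (equivalently, in the torsor $y^2=tx^4+tx^2+t^2x+t^3c_1^4$ all four normalized summands force $\lambda=1$), so no continuous identifications exist despite there being a single reducible fibre. Finally, your characterization of the supersingular locus as ``$c_1$ constant'' is wrong --- it is ``$g_2$ a square''; the count $(2+2)-1=3$ still comes out, but not by the route you describe.
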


In comparison, in characteristic zero, there are $2$-dimensional families of torsors of Enriques surfaces above a given rational elliptic surface (but they are not necessarily irreducible), and the same holds true for very general rational elliptic surfaces in any characteristic for moduli-dimension reasons.  Theorem~\ref{thm2} thus highlights once again how special quasi-elliptic Enriques surfaces are.

Theorem~\ref{thm2} enables us to translate Ito's classification of rational quasi-elliptic surfaces into an explicit classification of the Enriques torsors; see Table~\ref{tab2}.  As a key application, this will also put us in the position to complete the classification of Enriques surfaces with finite automorphism groups, our second application.  After the work of Kond\=o \cite{Kondo-aut}, Nikulin \cite{Nikulin} and Martin \cite{Martin},  only the cases of classical and supersingular Enriques surfaces are left, where the possible graphs $\Gamma$ of smooth rational curves have been computed in \cite{KKM}, but the automorphism groups and the moduli involved have not been determined completely yet.  Using Theorem~\ref{thm}, we can remedy this with our second main result. 

\begin{theorem}
\label{thm3}
Let $S$ be an Enriques surface with finite automorphism group.  Then $S$ appears in \cite{Martin}, for singular Enriques surfaces, or in \cite{KKM}, or, for type $\Gamma=\tilde E_6+\tilde A_2$, in the family \eqref{c4} from Theorem~\ref{thm:c-eqn}, for classical and supersingular Enriques surfaces.

In particular, classical or supersingular Enriques surfaces with finite automorphism group form irreducible families, uniquely depending on $\Gamma$, of the dimension and with the automorphism group stated in \cite{KKM}.
\end{theorem}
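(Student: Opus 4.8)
The plan is to reduce everything to an analysis of a single quasi-elliptic fibration put into the normal form of Theorem~\ref{thm}. By the work of \cite{KKM}, every classical or supersingular Enriques surface $S$ with finite automorphism group has a dual graph $\Gamma$ of smooth rational curves belonging to an explicit finite list, and each such $\Gamma$ contains an extended Dynkin subdiagram realizing a genus-one fibration; in the cases relevant to classical and supersingular surfaces this fibration can be taken to be quasi-elliptic. First I would fix, for each admissible $\Gamma$, such a quasi-elliptic fibration and invoke Theorem~\ref{thm} to write $S$ in the normal form \eqref{eq:classical} or \eqref{eq:supersingular}. This replaces the lattice-theoretic data of $\Gamma$ by concrete divisibility and vanishing conditions on the coefficient polynomials $a_i$ (equivalently, on $c_1,g_2$ via Theorem~\ref{thm2}), so that the classification becomes a bookkeeping of the degenerations of the reducible fibres.

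Next, for every type other than $\tilde E_6+\tilde A_2$, I would impose the reducible-fibre configuration prescribed by $\Gamma$ on the normal form. Matching the required fibre types against the degenerations of \eqref{eq:general'} pins the polynomials down, up to the coordinate changes preserving the form, yielding an explicit parametrization whose irreducibility and dimension can be read off directly. Comparing this parametrization with the surfaces constructed in \cite{KKM} (and, for the singular case, \cite{Martin}) then shows that $S$ already occurs there, and that the dimension and the automorphism group agree with the values recorded in \cite{KKM}. The point is that the normal form makes the moduli count mechanical and rules out extra components or parameters.

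It remains to treat the single outstanding type $\Gamma=\tilde E_6+\tilde A_2$, which is the genuinely new contribution. Here I would show that the fibre conditions force $S$ into precisely the family \eqref{c4} of Theorem~\ref{thm:c-eqn}, verify that this family is irreducible of the stated dimension, and compute $\Aut(S)$ by combining the action on $\Num(S)$ with the explicit geometry, using the descriptions of torsors and of numerically trivial automorphisms obtained earlier from the normal form. I expect the main obstacle to lie exactly here: proving that every surface with this dual graph really has \emph{finite} automorphism group equal to the prediction of \cite{KKM}, and that the family does not break into several components or acquire unexpected moduli. Controlling the numerically trivial part of $\Aut(S)$ and ruling out $(-2)$-curves beyond those already in $\Gamma$ are the delicate points; the normal form is what makes both of these tractable.
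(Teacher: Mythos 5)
Your overall skeleton is the same as the paper's: for each admissible graph $\Gamma$ from \cite{KKM}, pick a genus-one fibration, translate it into the normal form via Theorem~\ref{thm2} and Table~\ref{tab2}, and compare the resulting families with \cite{KKM} and \cite{Martin}. However, there are two concrete gaps. First, your premise that the fibration ``can be taken to be quasi-elliptic'' in all classical and supersingular cases is false for type $\Gamma=$ VII: this graph is singled out precisely by the property that it induces \emph{only elliptic} fibrations, so the quasi-elliptic normal form machinery does not apply to it at all. The paper handles this case by an entirely different argument: the classification of fibrations in \cite[Appendix~A.1]{KKM} shows the universal cover is the supersingular K3 surface of Artin invariant $1$, and one then invokes Kond\=o's classification \cite{Kondo} of its Enriques quotients. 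Your proposal has no mechanism for this case.

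Second, your claim that for every type other than $\tilde E_6+\tilde A_2$ matching the reducible-fibre configuration ``pins the polynomials down'' and ``rules out extra components or parameters'' fails for type VIII. There, the prescribed quasi-elliptic configuration (two multiple fibres of type $\III$ and a simple $\I_2^*$) only yields the two-parameter family \eqref{eq:VIII}, whose \emph{general} member has infinite automorphism group: one must build a second, elliptic fibration $f'$ from a divisor of Kodaira type $\I_1^*$ inside the curve configuration and observe that $\mathrm{MW}(\Jac(f'))$ has positive rank---hence induces automorphisms of infinite order---unless $a=b$. The same phenomenon governs $\tilde E_6+\tilde A_2$ (which you do flag as delicate, but misstate: the fibre conditions do \emph{not} force $S$ into \eqref{c4}; they give the two-parameter family \eqref{eq:E6A2c}, and only the auxiliary elliptic fibration coming from a $\IV^*$ divisor, with its fibre at $w=c^2b^2$ degenerating from $\I_2$ to $\I_3$, cuts this down to the one-parameter family \eqref{c4}, resp.\ to a single supersingular surface at $c=1$). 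So in three of the cases the ``mechanical bookkeeping'' of one fibration is genuinely insufficient; the second-fibration/Mordell--Weil argument is an essential missing ingredient of the proof, not a refinement of it.
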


Explicit equations are given in Theorems~\ref{thm:c-eqn} and~\ref{thm:s-eqn}.  It also follows from Theorem~\ref{thm3} that the subgroups of cohomologically or numerically trivial automorphisms are as stated in \cite{KKM}.  By \cite{DM}, this leaves open only the existence of Enriques surfaces with a cohomologically trivial automorphism of order $3$.  Using Theorems~\ref{thm} and~\ref{thm2}, we can also solve this. 

\begin{theorem}
\label{thm:ct}
Let $S$ be an Enriques surface (in any characteristic) with a cohomologically trivial automorphism of order $3$.  Then $S$ is a supersingular Enriques surface in characteristic $2$ and belongs to the following family:
\begin{eqnarray}
\label{eq:ct}
S:\;\; y^2 = tx^4 + \alpha t^5x^2 + t^7 x + t^3 \quad (\alpha\in k); 
\end{eqnarray}
the cohomologically trivial automorphism of order $3$ is given by $(x,y,t) \mapsto (\zeta^2 x, y, \zeta t)$, where $\zeta$ denotes a primitive third root of unity.
\end{theorem}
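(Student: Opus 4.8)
The plan is to reduce to an explicit quasi-elliptic surface in characteristic $2$, impose the order-$3$ symmetry on the normal form of Theorem~\ref{thm}, and read off the family. First I would fix the characteristic and the geometry: by \cite{DM}, combined with \cite{Martin}, \cite{KKM} and Theorem~\ref{thm3}, an Enriques surface whose group of cohomologically trivial automorphisms contains an element of order $3$ can be neither of characteristic $\neq 2$ nor a singular (ordinary) Enriques surface. This forces $p=2$ and leaves $S$ classical or supersingular, hence quasi-elliptic. Writing $\sigma$ for the given automorphism, note that $\sigma$ is cohomologically, and in particular numerically, trivial, so it fixes the class of every genus-one pencil and therefore preserves every quasi-elliptic fibration $\pi\colon S\to\mathbb{P}^1$; since $\sigma$ has order $3$, prime to $p$, it is tame.

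Next I would analyse the induced action on the base of $\pi$. A fibre-preserving automorphism either translates by a section or scales the additive structure $\mathbb{G}_a$ of the cuspidal fibres; the former is excluded because the Mordell--Weil group of a quasi-elliptic fibration is a $2$-group, and the latter $(\mu_3\subset\mathbb{G}_m$ acting fibrewise$)$ I would rule out by a fixed-point analysis, a scaling fixing only the zero section and the curve of cusps. Hence $\sigma$ acts nontrivially on $\mathbb{P}^1$ as an order-$3$ automorphism with exactly two fixed points. Being numerically trivial, $\sigma$ fixes the class of each component of each reducible fibre, hence fixes each such component as a curve, so that every reducible fibre lies over one of the two fixed base points, which I normalise to $t=0$ and $t=\infty$; the base action is then $t\mapsto\zeta t$. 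At this stage the classical case is excluded: in \eqref{eq:classical} the constant term $t^3(1+t)^4=t^3+t^7$ contains monomials of two distinct weights modulo $3$, so the equation cannot be invariant under $t\mapsto\zeta t$ once the $y^2$-term fixes the weight of $y$. By the rigidity of the normal form this rules out a classical $S$, leaving the supersingular equation \eqref{eq:supersingular}.

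It then remains to impose invariance of \eqref{eq:supersingular} under $t\mapsto\zeta t$. After absorbing the admissible coordinate changes preserving the normal form, the induced action is a pure scaling $x\mapsto\zeta^{a}x$, $y\mapsto\zeta^{b}y$, and matching the weights of $y^2$, $tx^4$ and $t^3$ forces $b\equiv 0$ and $a\equiv 2\pmod 3$; the remaining monomials then force $a_1=0$, $a_0$ constant, and $a_2$ a scalar multiple of $t$, which is exactly the family \eqref{eq:ct} together with $\sigma\colon(x,y,t)\mapsto(\zeta^2x,y,\zeta t)$. Finally one must confirm that this $\sigma$ is cohomologically, not merely numerically, trivial. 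Here numerical triviality already yields triviality on $\ell$-adic cohomology for $\ell\neq 2$ $($because $b_1=0$ and $H^2\otimes\mathbb{Q}_\ell\cong\Num(S)\otimes\mathbb{Q}_\ell)$, so the whole point is the action on the $p$-part, that is, on $\Pic^\tau\cong\alpha_2$.

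The main obstacle is precisely this last point. Since $\Aut(\alpha_2)=\mathbb{G}_m$ contains $\mu_3$, an order-$3$ automorphism of a supersingular Enriques surface may a priori act nontrivially on $\Pic^\tau$, and it is exactly this possibility that \cite{DM} left undecided; showing that the explicit $\sigma$ above acts trivially there $($for instance through its action on global regular differentials, or via the associated torsor structure of Theorem~\ref{thm2}$)$ is the crux of the argument. The reduction to the quasi-elliptic normal form, and in particular the clean exclusion of fibrewise scalings, is the second, more technical, ingredient.
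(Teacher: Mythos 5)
Your proposal follows the same skeleton as the paper's proof (reduce to characteristic $2$ and order $3$ via \cite{DM}, \cite{Martin}, \cite{KKM} and Theorem~\ref{thm3}, then impose the order-$3$ symmetry on the quasi-elliptic normal form and extract \eqref{eq:ct} by a weight count), but it has genuine gaps. First, ``classical or supersingular, hence quasi-elliptic'' is a false implication: most classical and supersingular Enriques surfaces admit no quasi-elliptic fibration at all (the type $\mathrm{VII}$ surfaces in this paper carry only elliptic fibrations, and quasi-elliptic Enriques surfaces form families of dimension well below $10$). The paper instead invokes \cite[Proposition~7.9]{DM}, which produces on $S$ a quasi-elliptic fibration with two fibres of type $\I_0^*$, exactly one of them multiple; this citation simultaneously yields the fibration, supersingularity, and the Table~\ref{tab2} form \eqref{eq:2D4-3} with $g_2=t^2$, so your separate weight-argument exclusion of the classical case, while correct in substance, is not the missing ingredient. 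Second, you cannot simply ``absorb admissible coordinate changes'' to make the action a pure scaling: Lemma~\ref{lem:aut2} only gives $(x,y,t)\mapsto(\beta x+b_2,\delta y+d_5,\zeta t)$, and eliminating $b_2$ and $d_5$ is exactly the coefficient computation in the paper's proof. This is not cosmetic: the equation $\beta'(\beta'^3+\alpha\zeta\beta'+1)=0$ arising there has nonzero roots corresponding to genuine automorphisms covering $t\mapsto\zeta t$ which are \emph{not} scalings; they are discarded only because they have even order, so the order-$3$ hypothesis enters at precisely the point your sketch glosses over. (Your argument for nontriviality of the base action also leans on a ``zero section'', which does not exist on $S$; the paper cites \cite[Lemma~7.5]{DM} for this step.)

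The most serious problem is the final step, which you label ``the crux'' and leave open. Your derivation only proves the implication ``if a cohomologically trivial automorphism of order $3$ exists, then $(S,\sigma)$ has the stated form''; without the converse verification the theorem is potentially vacuous and cannot feed into Corollary~\ref{cor:ct}. The paper settles this by tracking the action of the exhibited automorphism through the explicit resolution of singularities of Sections~\ref{s:ade} and~\ref{s:res}, which shows triviality on $\Num(S)$ -- and for a supersingular Enriques surface this suffices: since $\Pic^\tau(S)=\alpha_2$ is infinitesimal, the \'etale fundamental group is trivial, $H^2_{\mathrm{\acute{e}t}}(S,\Z_\ell)$ is torsion free of rank $10=\rho(S)$ and coincides with $\Num(S)\otimes\Z_\ell$, so numerical and cohomological triviality agree. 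Your proposed obstruction -- a possible nontrivial action on $\Pic^\tau\cong\alpha_2$ via $\mu_3\subset\mathbb{G}_m$ -- plays no role in the ($\ell$-adic) notion of cohomological triviality used in \cite{DM} and here; what \cite{DM} actually left open was the existence of such a pair $(S,\sigma)$, not an $\alpha_2$-subtlety. So the check that is genuinely needed (numerical triviality of the explicit $\sigma$, via the resolution) is never performed in your proposal, while the difficulty you flag in its place is a red herring.
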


As a consequence, the full picture of possible groups of numerically trivial automorphisms of  Enriques surfaces follows from \cite{DM,DM2,KKM} combined with Theorems~\ref{thm3} and~\ref{thm:ct} (and similarly for cohomologically trivial automorphisms). 

\begin{corollary}
\label{cor:ct}
A group $G$ appears as  group of numerically trivial automorphisms of some Enriques surface over $k$ if and only if
\begin{enumerate}
\item\label{c:ct-1}
$G\in\{\{1\}, \Z/2\Z, \Z/4\Z\}$ if char$(k)\neq 2$;
\item\label{c:ct-2}
$G\in\{\{1\},\Z/2\Z\}$ if char$(k)=2$ and the Enriques surface is singular;
\item\label{c:ct-3}
$G\in\{\{1\}, \Z/2\Z, (\Z/2\Z)^2\}$ if char$(k)=2$ and the Enriques surface is classical;
\item\label{c:ct-4}
$G\in \{\{1\}, \bbZ/2\bbZ, \bbZ/3\bbZ, \bbZ/5\bbZ, \bbZ/7\bbZ, \bbZ/11\bbZ, Q_8 \}$ if char$(k)=2$ and the Enriques surface is supersingular.
\end{enumerate}
\end{corollary}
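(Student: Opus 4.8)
The plan is to treat Corollary~\ref{cor:ct} as the consolidation of several classifications into one statement, so the first task is to fix the framework that lets the pieces be compared. I would begin by recording the inclusion $\Aut_{\mathrm{ct}}(S)\subseteq\Aut_{\mathrm{nt}}(S)$ of the cohomologically trivial inside the numerically trivial automorphisms, together with the fact that the discrepancy between the two is controlled by the induced action on $\Pic^\tau(S)$ and on the canonical class $K_S$. This is exactly the datum that separates the singular, classical and supersingular cases in characteristic $2$, and it is what allows the numerically trivial classification below to carry over verbatim to the cohomologically trivial one, as the parenthetical in the statement claims.

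The bulk of the four lists \eqref{c:ct-1}--\eqref{c:ct-4} is already contained in the work of Dolgachev--Martin \cite{DM,DM2}, which determines $\Aut_{\mathrm{nt}}(S)$ in every characteristic and, in characteristic $2$, for each of the three types, subject to one open point: whether an automorphism of order $3$ can occur. The more exotic entries --- the group $(\bbZ/2\bbZ)^2$ in the classical list and the groups $\bbZ/5\bbZ$, $\bbZ/7\bbZ$, $\bbZ/11\bbZ$ and $Q_8$ in the supersingular list --- are attached to the very special Enriques surfaces with finite automorphism group, and for these $\Aut_{\mathrm{nt}}(S)$ is recorded in \cite{KKM} (and, for the singular type, in \cite{Martin}). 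Here Theorem~\ref{thm3} is what makes the list exhaustive: it guarantees that the classification of \cite{KKM} is complete, closing the type $\tilde E_6+\tilde A_2$ via the family \eqref{c4} and confirming that this newly settled family contributes no group beyond those already listed.

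It then remains to realize an automorphism of order $3$, and this is the genuinely new input supplied by Theorem~\ref{thm:ct}: the one-parameter family \eqref{eq:ct} is supersingular and carries the cohomologically trivial automorphism $(x,y,t)\mapsto(\zeta^2x,y,\zeta t)$ of order $3$. This simultaneously inserts $\bbZ/3\bbZ$ into \eqref{c:ct-4} and, because the family is supersingular, certifies that order $3$ occurs in no other characteristic and no other type, thereby disposing of the one point left open by \cite{DM}. All remaining realizations --- the cyclic groups of orders $2,4,5,7,11$ and the quaternion group $Q_8$ --- are furnished by the explicit models already present in \cite{DM,DM2,KKM}.

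The main obstacle is not a single computation but the completeness bookkeeping that underlies the whole assembly: one has to be sure that Theorem~\ref{thm3} really accounts for every Enriques surface with finite automorphism group, so that no case slips through and the $\tilde E_6+\tilde A_2$ family introduces nothing new, and that the order-$3$ surfaces of Theorem~\ref{thm:ct} have numerically trivial group exactly $\bbZ/3\bbZ$ rather than something larger. Both are precisely the points at which the normal forms of Theorem~\ref{thm} do the decisive work, converting an otherwise open-ended search into a finite and explicit verification.
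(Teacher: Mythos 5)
Your proposal is correct and follows essentially the same route as the paper: the corollary is deduced from the classifications in \cite{DM,DM2,KKM}, with Theorem~\ref{thm3} supplying the completeness of the \cite{KKM} lists needed for parts \eqref{c:ct-3} and \eqref{c:ct-4}, and Theorem~\ref{thm:ct} supplying the missing realization of $\Z/3\Z$. The paper's own proof is just a terser statement of exactly these two inputs, so no further comparison is needed.
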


One can also apply our results to the study of maximal root types supported on Enriques surfaces, \textit{i.e.}, rank $9$ root lattices whose vertices correspond to smooth rational curves.  In \cite{S-hom2} there is given a complete classification of the maximal root types for singular and classical Enriques surfaces.  This can now be complemented for many types on supersingular Enriques surfaces.  For another direction of applications of our methods, see Remark~\ref{rem:appl}.

\subsection*{Organization of the paper}

After reviewing, in Section~\ref{eq:def}, the basics of Enriques surfaces needed for our work, we start by developing an equation for nodal Enriques surfaces valid in any characteristic using Riemann--Roch (Section~\ref{eq:def}).  Then we specialize to the quasi-elliptic setting.  We start out by reviewing Queen's equations in Section~\ref{s:Queen}.  Applying the results from Section~\ref{eq:def}, we obtain a first working version of the desired normal form in \eqref{eq:unique} which already resembles the Weierstrass form of an elliptic curve to some extent.

The ensuing Sections~\ref{s:rJ}--\ref{s:pf} are devoted to a thorough analysis of the impact of the condition of $S$ being an Enriques surface on the polynomials occurring as coefficients \eqref{eq:unique}.  We first introduce the relative Jacobian in Section~\ref{s:rJ}; this is a rational quasi-elliptic surface sharing the same reducible fibres with the quasi-elliptic Enriques surface $S$.  Notably this implies high divisibility conditions for the discriminant and subsequently also for the coefficients (see Lemma~\ref{lem:div}).  In turn, these translate into singularities of the resulting normal form which will crucially impact the canonical divisor (see \eqref{eq:model}).  In Section~\ref{s:ade}, we concentrate on ADE singularities and show that they exclusively occur on simple fibres (see Corollary~\ref{lem:simple}).  Section~\ref{s:res} covers more complicated singularities; it develops an explicit resolution of the singularities (similar in spirit to Tate's algorithm for elliptic curves) which in particular shows that the underlying fibres are multiple unless the degree of \eqref{eq:model} can be reduced.  The explicit nature of the resolution is used in Section~\ref{s:canon} to detect the impact on the canonical divisor.  Section~\ref{s:pf} then collects all geometric and topological information needed to prove Theorem~\ref{thm}.

The final five sections are concerned with the applications given above, namely to torsors, numerically or cohomologically trivial automorphisms and finite automorphism groups.

\subsection*{Acknowledgements}
We are very grateful to the referee for the thorough comments which greatly helped us improve the paper.  We thank S.\ Kond\=o for helpful discussions and I.\ Dolgachev and G.\ Martin for interesting comments.

\section{Basics on Enriques surfaces}
\label{s:basics}

Let $S$ be an Enriques surface, \textit{i.e.}~a smooth algebraic surface with
\[
b_2(S) = 10, \quad K_S\equiv 0
\]
regardless of the characteristic.  Outside characteristic $2$, Enriques surfaces form an irreducible $10$-dimensional family, but in characteristic $2$ there are three classes of Enriques surfaces by \cite{BM}, which depend on the Picard scheme $\Pic^\tau(S)$ as follows:
$$
\begin{array}{lcl}
\text{classical:} && \Pic^\tau(S) = \Z/2\Z,\\
\text{singular:} && \Pic^\tau(S) = \mu_2,\\
\text{supersingular:} && \Pic^\tau(S) = \alpha_2. 
\end{array}
$$
Both classical and singular Enriques surfaces form irreducible $10$-dimensional families; their closures intersect in the $9$-dimensional supersingular locus (\textit{cf.} \cite{Liedtke}).  Singular Enriques surfaces behave as in characteristic zero in the sense that they are quotients of K3 surfaces by free involutions.  Hence many ideas and results carry over; for instance, the classification of singular Enriques surfaces with finite automorphism group is completely known thanks to \cite{Martin}, while there are a few open questions for the other types of Enriques surfaces which we will answer in this paper.

By \cite{Illusie}, we know that $\rho(S)=10$ and $\Num(S)\cong U \oplus E_8$.  In particular, $\Num(S)$ represents zero non-trivially, and by Riemann--Roch, $S$ admits a genus $1$ fibration
\begin{eqnarray}
\label{eq:fibr}
f\colon S \lra \PP^1.
\end{eqnarray}
Outside characteristic $2$, this comes with two multiple fibres (thus with no section), but in characteristic $2$, the class of the Enriques surface $S$ is given by the multiple fibre(s) (of multiplicity $2$) as follows by \cite[Theorems~5.7.5 and 5.7.6]{CD}:
\begin{table}[ht!]
\begin{tabular}{lcl}
classical: && two multiple fibres, both  ordinary or  additive, \\
singular: && one multiple fibre,  ordinary or multiplicative, \\
supersingular: && one multiple fibre,  supersingular or additive.  
\end{tabular}
\end{table}

\noindent
Note that any genus~$1$ fibration on a singular Enriques surface is elliptic.

Many properties of an Enriques surface are governed by the question of whether it contains a smooth rational curve $C$ (often called a nodal curve or $(-2)$-curve, for $C^2=-2$).  Nodal Enriques surfaces have codimension $1$ in moduli. By \cite{Cossec, Lang}, on every nodal Enriques surface, there exists a $(-2)$-curve $C$ that appears as a bisection of some genus~$1$ fibration \eqref{eq:fibr}.  In the next section, we will start by briefly considering nodal Enriques surfaces in full generality.  These considerations will apply subsequently to quasi-elliptic Enriques surfaces in characteristic $2$ because these always contain a nodal curve, namely the curve of cusps.

\section{The defining equation} 
\label{eq:def}

In this section we let $k$ denote an arbitrary algebraically closed field.  Let $S$ be a nodal Enriques surface, and fix a genus~$1$ fibration \eqref{eq:fibr} with nodal bisection $C$.  Let $t$ be a coordinate of ${\bf A}^1 \subset {\bf P}^1$. Then, $t$ has a pole of order~$1$ at the point $P_{\infty}$ at infinity.  We may assume that $f \colon S \rightarrow \PP^1$ has a multiple fibre at $P_{\infty}$.  We set $f^{-1}(P_{\infty}) = 2F_{\infty}$. Then, we have
$$
     C^2 = -2, \quad (C\cdot F_{\infty})= 1,\quad F_{\infty}^2 = 0.
$$
By a suitable M\"obius translation, we may assume that the fibre defined by $t=0$ is simple and irreducible.

We have the following vanishing theorem.

\begin{theorem}[Cossec, Dolgachev and Liedtke, \textit{cf.} \protect{\cite[Theorem 2.1.15]{CDL}}]\label{vanishing}
Let $S$ be an Enriques surface defined over $k$ and $D$ be a big and nef divisor on $S$. Then we have ${\rm H}^{1}(S, \calO_{S}(D)) = 0$.
\end{theorem}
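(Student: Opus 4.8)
The plan is to exploit the fact that $K_S\equiv 0$ together with Serre duality to turn the statement into a Ramanujam--Mumford type vanishing, and then to isolate the genuinely positive-characteristic difficulty. By Serre duality $\mathrm{H}^1(S,\calO_S(D))\cong \mathrm{H}^1(S,\calO_S(K_S-D))^\vee$, and since $K_S\equiv 0$ the divisor $L:=D-K_S$ is numerically equivalent to $D$, hence again big and nef, while $K_S-D=-L$. Thus it suffices to prove
\[
\mathrm{H}^1(S,\calO_S(-L))=0\quad\text{for every big and nef }L.
\]
Riemann--Roch together with $K_S\equiv 0$ and $\chi(\calO_S)=1$ gives $\chi(\calO_S(L))=1+\tfrac12 L^2>0$; moreover $\mathrm{H}^2(S,\calO_S(L))=\mathrm{H}^0(S,\calO_S(K_S-L))^\vee=0$, because $K_S-L$ has negative intersection with an ample class and so is not effective. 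Hence $|L|\neq\varnothing$ and I may fix an effective member $Z\in|L|$.

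The main tool is the structure sequence $0\to\calO_S(-Z)\to\calO_S\to\calO_Z\to 0$. Its cohomology sequence, using $\mathrm{H}^0(\calO_S)=k$, yields an injection $\mathrm{H}^1(S,\calO_S(-Z))\hookrightarrow \mathrm{H}^1(S,\calO_S)$ as soon as $\mathrm{H}^0(Z,\calO_Z)=k$. The latter follows from $Z$ being numerically $1$-connected, which I would verify for \emph{every} effective member of $|L|$ by a Hodge-index computation on $\Num(S)=U\oplus E_8$: for a splitting $Z=A+B$ into nonzero effective divisors one has $A\cdot B\ge (A\cdot L)(B\cdot L)/L^2$, which is positive when both factors meet $L$, while if $A\cdot L=0$ then $A^2<0$ (as $A$ lies in the negative definite orthogonal complement of $L$) and $A\cdot B=-A^2>0$. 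This step is characteristic-free and yields $h^1(\calO_S(-L))\le h^1(\calO_S)$.

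This already finishes the proof whenever $h^1(\calO_S)=0$, that is, in every characteristic $\neq 2$ and for classical Enriques surfaces in characteristic $2$. The hard part is the remaining case of singular and supersingular Enriques surfaces in characteristic $2$, where $h^1(\calO_S)=1$ and the argument only gives $h^1(\calO_S(-L))\le 1$; one must still exclude the one-dimensional contribution, equivalently show that the restriction $\mathrm{H}^1(S,\calO_S)\to \mathrm{H}^1(Z,\calO_Z)$ is injective. I expect this to be the real obstacle, since it reflects that Kodaira-type vanishing genuinely fails for some surfaces in positive characteristic (Raynaud), so neither Mumford's vanishing nor lifting to characteristic $0$ with semicontinuity settles it directly (the former is simply false for Raynaud-type surfaces, and semicontinuity bounds $h^1$ in the wrong direction). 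Resolving it should require the characteristic-$2$ structure theory developed earlier---for instance the Frobenius action on $\mathrm{H}^1(\calO_S)$, which is bijective in the singular case and zero in the supersingular case, the explicit geometry of the genus-one fibration with its multiple fibre, or a passage to the K3-like canonical cover---to prove that the nonzero class in $\mathrm{H}^1(\calO_S)$ restricts nontrivially to the big curve $Z$.
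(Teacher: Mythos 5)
Your reduction is sound as far as it goes, and it is the standard one: the Serre-duality switch from $D$ to $-L$ with $L\equiv D$, the Riemann--Roch argument producing an effective member $Z\in|L|$, the Hodge-index computation showing every such $Z$ is numerically $1$-connected, and Ramanujam's connectedness lemma giving $h^0(\mathcal{O}_Z)=k$ (indeed characteristic-free) together yield $h^1(S,\mathcal{O}_S(-L))\le h^1(S,\mathcal{O}_S)$. This disposes of all characteristics $\neq 2$ and of classical Enriques surfaces in characteristic $2$.

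However, what remains is not a deferrable loose end but the actual content of Theorem~\ref{vanishing} in the setting of this paper, and you have not proved it. For singular and supersingular Enriques surfaces in characteristic $2$ one has $h^1(\mathcal{O}_S)=1$, your bound only gives $h^1(\mathcal{O}_S(-L))\le 1$, and the needed injectivity of $H^1(S,\mathcal{O}_S)\to H^1(Z,\mathcal{O}_Z)$ is merely flagged as an expected obstacle, with candidate tools named but no argument. Worse, the one concrete mechanism you hint at (iterating Frobenius) can at best work in the singular case, where $F$ is bijective on $H^1(\mathcal{O}_S)$; it collapses precisely in the supersingular case, where $F=0$ --- and supersingular surfaces are central to this paper, since quasi-elliptic Enriques surfaces are classical or supersingular and the vanishing is applied to them through Lemma~\ref{dimension}. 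Note also that the paper itself offers no proof to compare against: it quotes the statement as a black box from \cite[Theorem~2.1.15]{CDL}, whose proof supplies exactly the step you are missing. So your proposal reconstructs the routine reduction but leaves the genuinely hard, characteristic-$2$ core unproved.
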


\begin{lemma}\label{nef-and-big}
Let $a,b\in\Z$. Then $D=aC+bF_\infty$ is big and nef if and only if $a>0$ and $b\geq 2a$.
\end{lemma}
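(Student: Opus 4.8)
The plan is to work entirely with the intersection numbers $C^2=-2$, $(C\cdot F_\infty)=1$, $F_\infty^2=0$, from which
\[
D^2 = 2a(b-a), \qquad D\cdot C = b-2a, \qquad D\cdot F_\infty = a.
\]
For the ``only if'' direction I would argue as follows. Both $C$ and $F_\infty$ are effective (the latter being half of the fibre $f^{-1}(P_\infty)=2F_\infty$), so nefness of $D$ forces $D\cdot C=b-2a\ge 0$ and $D\cdot F_\infty=a\ge 0$, that is $b\ge 2a$ and $a\ge 0$. Bigness of a nef divisor on a surface is equivalent to $D^2>0$, and in the range $a\ge 0,\ b\ge 2a$ the quantity $D^2=2a(b-a)$ vanishes precisely when $a=0$; hence $a=0$ is excluded and we obtain $a>0$.

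For the ``if'' direction, assume $a>0$ and $b\ge 2a$. Then $b-a\ge a>0$, so $D^2=2a(b-a)>0$ is immediate, and once nefness is established bigness follows from the same surface criterion. To prove nefness I would check $D\cdot\Gamma\ge 0$ for every irreducible curve $\Gamma$, exploiting two structural inputs: that $2F_\infty$ is numerically a fibre of $f$, hence nef with $F_\infty\cdot\Gamma=0$ exactly when $\Gamma$ is vertical, and that $C$, being a bisection, is horizontal. The case analysis then runs as follows. If $\Gamma=C$ then $D\cdot C=b-2a\ge 0$. If $\Gamma\ne C$ is vertical then $F_\infty\cdot\Gamma=0$, so $D\cdot\Gamma=a\,(C\cdot\Gamma)\ge 0$ because $a>0$ and two distinct irreducible curves meet non-negatively. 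If $\Gamma\ne C$ is horizontal then $C\cdot\Gamma\ge 0$ and $F_\infty\cdot\Gamma\ge 0$, whence $D\cdot\Gamma\ge 0$ since $a,b>0$.

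The only genuinely delicate point is the curve $C$ itself, where $C^2=-2<0$ could a priori violate nefness; this is exactly what the hypothesis $b\ge 2a$ is tailored to control, cutting out the wall $D\cdot C=0$ that bounds the nef cone inside the hyperbolic plane $\langle C, F_\infty\rangle$. Everything else reduces to the nefness of the fibre class together with the fact that a bisection meets all vertical and all other horizontal curves non-negatively, so no finer information about the (possibly infinitely many) remaining curves on $S$ is required.
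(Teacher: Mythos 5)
Your proof is correct and follows essentially the same route as the paper's: nefness is checked on $C$, on vertical curves (where $F_\infty\cdot\Gamma=0$), and on all remaining irreducible curves via non-negativity of intersections of distinct irreducible curves, while bigness comes from $D^2=2a(b-a)>0$ once nefness is known. If anything, you are slightly more careful in the ``only if'' direction, where nefness alone yields only $a\ge 0$ and bigness is needed to rule out $a=0$, a point the paper glosses over.
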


\begin{proof}
If $D$ is nef, then intersecting with $F_\infty$ gives $a>0$ and intersecting with $C$ implies $b\geq 2a$.
 
Conversely, $D^2=2a(b-a)>0$ directly shows that $D$ is big.  For any irreducible component $\Theta$ of $F_{\infty}$, we have $(F_{\infty}\cdot \Theta) = 0$. Therefore, we have $(D\cdot \Theta) \geq 0$. Moreover, $(D\cdot C) = -2a + b \geq 0$ by assumption.  But then, this gives $(D\cdot C') \geq 0$ for any irreducible curve $C'\subset S$.  Hence $D$ is nef, as claimed.
\end{proof}

\begin{lemma}\label{dimension}
Let $a,b\in\Z_{>0}$ with $b\geq 2a$. Then $D=aC+bF$ satisfies $\dim L(D)=1+ab-a^2$.
\end{lemma}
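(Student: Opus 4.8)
The plan is to combine Riemann--Roch on the surface $S$ with the vanishing theorem, once I have verified that $D$ is big and nef. Since the hypotheses give $a,b\in\Z_{>0}$ with $b\geq 2a$, Lemma~\ref{nef-and-big} applies and shows that $D=aC+bF$ is big and nef. Theorem~\ref{vanishing} then yields at once that $\mathrm{H}^1(S,\calO_S(D))=0$, so the only remaining cohomology to control is $\mathrm{H}^2$.

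To handle $\mathrm{H}^2$ I would use Serre duality, $h^2(S,\calO_S(D))=h^0(S,\calO_S(K_S-D))$, and argue that the right-hand side vanishes. Here one must be a little careful, since on a classical or supersingular Enriques surface $K_S$ is only numerically, not linearly, trivial; nonetheless $K_S-D$ is numerically equivalent to $-D$. If $K_S-D$ were effective, linearly equivalent to some effective divisor $E\geq 0$, then nefness of $D$ would force $0\leq (D\cdot E)=(D\cdot(K_S-D))=(D\cdot K_S)-D^2=-D^2$, which is impossible because $D^2>0$ as $D$ is big. Hence $K_S-D$ is not effective and $h^2(S,\calO_S(D))=0$.

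It then only remains to compute the Euler characteristic. Since $\chi(\calO_S)=1$ and $K_S\equiv 0$, Riemann--Roch gives $\chi(\calO_S(D))=1+\tfrac12 D^2$, and using $C^2=-2$, $(C\cdot F)=1$ and $F^2=0$ one finds $D^2=-2a^2+2ab=2(ab-a^2)$, so $\chi(\calO_S(D))=1+ab-a^2$. Combining this with the two vanishing statements,
\[
\dim L(D)=h^0(S,\calO_S(D))=\chi(\calO_S(D))=1+ab-a^2,
\]
as claimed.

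The computation itself is entirely routine; the one place deserving attention is the vanishing of $\mathrm{H}^2$, precisely because $K_S$ is not a trivial divisor class in general in characteristic $2$. The nefness argument above is what lets me bypass this issue cleanly, reducing the whole statement to the big-and-nef criterion of Lemma~\ref{nef-and-big} together with Theorems~\ref{vanishing} and Riemann--Roch.
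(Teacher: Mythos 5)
Your proposal is correct and follows essentially the same route as the paper: verify bigness and nefness via Lemma~\ref{nef-and-big}, kill $\mathrm{H}^1$ with Theorem~\ref{vanishing}, kill $\mathrm{H}^2$ by Serre duality, and conclude with Riemann--Roch. The only difference is that you spell out the Serre-duality step (using nefness of $D$ and $D^2>0$ to rule out effectivity of $K_S-D$, which matters since $K_S$ is only numerically trivial), whereas the paper states this vanishing tersely; this is a welcome clarification, not a deviation.
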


\begin{proof}
Since we have $\chi ({\calO}_S) = 1$, by the Riemann--Roch theorem, we have
$$
\chi ({\calO}_S(D)) 
= (D^2 - D \cdot K_S)/2 + 1 = a(b-a)+1.
$$
From Theorem~\ref{vanishing}, Lemma~\ref{nef-and-big}  and the Serre duality theorem, we deduce
$$
{\rm H}^i(S, {\calO}_S(D)) = 0\quad  (i = 1, 2) \quad \mbox{for}~ n \geq 2.
$$
Therefore, we obtain $\dim L(D) = ab-a^2+1$, as claimed.
\end{proof}

We will use Lemma~\ref{dimension} to calculate a defining equation of $S$.
It follows from Lemma~\ref{dimension} that there exist
\[
x \in L(C+3F_\infty), \quad y \in L(2C+4F_\infty)
\]
such that
$\{ 1, t, t^2 , x, y\}$ is a basis of $L(2C + 4F_{\infty})$.

Consider the vector space $L(4C + 16 F_{\infty})$.  It follows from the choice of the functions $x,y,t$ that the following 50 functions are contained in $L(4C + 16 F_{\infty})$:
$$
\begin{array}{l}
t^{i}~(0\leq i\leq 8),~t^ix~(0\leq i\leq 6),~t^ix^2~(0\leq i\leq 5),\\
t^ix^3~(0\leq i\leq 3), ~t^ix^4~(0\leq i\leq 2),\\
t^iy~(0\leq i\leq 6),~t^iy^2~(0\leq i\leq 4),~t^ixy~(0\leq i\leq 4),~t^ix^2y~(0\leq i\leq 3). 
\end{array}
$$
On the other hand, by Lemma~\ref{dimension}, we have $\dim L(4C + 16 F_{\infty}) = 49$.  Therefore, these 50 functions are linearly dependent over $k$.  A non-trivial linear relation between these functions is expressed as
\begin{equation}\label{dependent}
h_4(t)y^2 + k_3(t)x^2y + k_4(t)xy +h_6(t)y 
 = g_2(t)x^4 + g_3(t)x^3 + g_5(t)x^2 + g_{6}(t)x + g_{8}(t)
\end{equation}
with indices indicating the degree of the respective polynomial in $k[t]$.

\begin{theorem}\label{equation}
Any nodal Enriques surface  is birationally expressed
by  \eqref{dependent}.
\end{theorem}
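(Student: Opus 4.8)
The plan is to prove that the three functions $t,x,y$ generate $k(S)$ and that \eqref{dependent} is, up to scaling, the only relation among them, so that $(t,x,y)$ defines a birational map from $S$ onto the affine surface $V\subset\mathbb{A}^3$ cut out by \eqref{dependent}. Throughout I identify $k(S)$ with the function field of the generic fibre $E$ of $f$, a curve of arithmetic genus $1$ over $K:=k(t)$; since the class of a fibre is $2F_\infty$, the bisection $C$ restricts to a divisor $D:=C|_E$ of degree $C\cdot 2F_\infty=2$.

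First I would compute $[k(S):k(t,x)]$. The coordinate $t$ is transcendental over $k$, and $x$ is not a function of $t$ alone (the only elements of $k(t)$ inside $L(C+3F_\infty)$ are $\langle 1,t\rangle$, from which $x$ is linearly independent), so $x|_E$ is non-constant. Because $x\in L(C+3F_\infty)$, the polar divisor of $x|_E$ is bounded by $D$, of degree $2$; as a curve of arithmetic genus $1$ carries no function with a single simple pole, this polar divisor equals $D$ and $x|_E\colon E\to\mathbb{P}^1_K$ has degree $2$. Hence $t,x$ are algebraically independent and $[k(S):k(t,x)]\le 2$. It is exactly $2$, because an Enriques surface is not rational whereas $k(t,x)$ is a rational function field, so $k(S)\neq k(t,x)$.

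The decisive step is to show $y\notin k(t,x)$; then $k(t,x)\subsetneq k(t,x,y)\subseteq k(S)$ forces $k(t,x,y)=k(S)$. Any element of $k(t,x)\cap L(2C+4F_\infty)$ is a polynomial $a(t)+b(t)x+c(t)x^2$ of $x$-degree at most $2$ (denominators or higher powers of $x$ would violate the bound $2C$). Reading off orders along the multiple fibre, $v_{F_\infty}(t)=-2$ from $f^{-1}(P_\infty)=2F_\infty$, while $x\in L(C+3F_\infty)\setminus L(C+2F_\infty)$ with $L(C+2F_\infty)=\langle 1,t\rangle$ (Lemma~\ref{dimension}) gives $v_{F_\infty}(x)=-3$. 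The constraint $v_{F_\infty}\ge -4$ then forces $c\equiv 0$ and $\deg a\le 2$, $b\in k$: the term $c(t)x^2$ has a pole of order at least $6$ along $F_\infty$ whose leading coefficient is a non-constant rational function on the curve $F_\infty$, and such a function cannot be cancelled by the constant-coefficient contributions of $a(t)$ and $b(t)x$. Thus $k(t,x)\cap L(2C+4F_\infty)=\langle 1,t,t^2,x\rangle$, and the fifth basis vector $y$ lies outside $k(t,x)$.

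Finally I would combine the two facts. Since $t,x$ are algebraically independent, \eqref{dependent} cannot reduce to a relation in $t,x$ alone, so it truly involves $y$; and since $y\notin k(t,x)$ has degree $2$ over $k(t,x)$, a relation that is linear in $y$ is impossible, whence $h_4\not\equiv 0$ and \eqref{dependent} is the quadratic minimal polynomial of $y$ over $k(t,x)$. It is therefore irreducible over $k(t,x)$, hence irreducible in $k[t,x,y]$ by Gauss, and the surface $V$ it defines has function field $k(t,x)[y]$ modulo \eqref{dependent}, which equals $k(t,x,y)=k(S)$. Consequently $(t,x,y)$ is a dominant rational map $S\to V$ of degree $1$, i.e.\ birational, as claimed. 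I expect the third step to be the main obstacle: ruling out a surviving $x^2$-term in $k(t,x)\cap L(2C+4F_\infty)$ (equivalently, proving $y\notin k(t,x)$) is the one point that genuinely uses that $F_\infty$ is a curve rather than a point, and cannot be settled by dimension counting alone.
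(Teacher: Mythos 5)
Your strategy is genuinely different from the paper's: the paper stays on the generic fibre $E$, invokes very ampleness of the degree-$4$ system $|2P|$ to embed $E$ into $\mathbb{P}^3_K$ by $[1:x:x^2:y]$, notes the image lies on the quadric $\{X_0X_2=X_1^2\}$ and eliminates $X_2$ to get \eqref{dependent}; you instead argue field-theoretically that $[k(S):k(t,x)]=2$ and $y\notin k(t,x)$. Your steps 1, 2 and 4 are correct (in fact $[k(S):k(t,x)]=2$ already follows from the polar divisor of $x|_E$ having degree $2$, so the appeal to non-rationality is not needed). But the decisive step 3 -- the one you yourself flagged as the main obstacle -- contains a genuine gap, and it fails precisely in cases central to this paper.

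The problem is your non-cancellation claim: that the leading coefficient of $c(t)x^2$ along $F_\infty$ is a \emph{non-constant} rational function on $F_\infty$. That leading coefficient is the residue of $x^2/t^3$ along $F_\infty$, and its divisor on $F_\infty$ is $2\,(D_x\cdot F_\infty)-2\,(C\cdot F_\infty)$, where $D_x\in|C+3F_\infty|$ is the zero divisor of $x$. Restricting $D_x\sim C+3F_\infty$ to $F_\infty$ gives $\mathcal{O}_{F_\infty}(D_x|_{F_\infty})\cong\mathcal{O}_{F_\infty}(C|_{F_\infty})\otimes N$, where $N=\mathcal{O}_{F_\infty}(F_\infty)$ and $N^{\otimes 2}\cong\mathcal{O}_{F_\infty}$. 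If $N$ has order $2$ (the tame case: characteristic $\neq 2$, or classical Enriques surfaces in characteristic $2$), then $D_x|_{F_\infty}\neq C|_{F_\infty}$ and your argument works. But for singular and supersingular Enriques surfaces in characteristic $2$ one has $\omega_S\cong\mathcal{O}_S$, and since $\omega_{F_\infty}\cong\mathcal{O}_{F_\infty}$ (indecomposable curve of canonical type), adjunction gives $N\cong\omega_{F_\infty}\otimes K_S^{-1}|_{F_\infty}\cong\mathcal{O}_{F_\infty}$: the half-fibre is wild. Then $D_x|_{F_\infty}\sim C|_{F_\infty}$, and both being effective of degree $1$ on an integral curve of arithmetic genus $1$ (where $h^0$ of a degree-$1$ bundle is $1$), they are \emph{equal}; hence the residue of $x^2/t^3$ is a non-zero \emph{constant}. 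So when $\deg a=\deg c+3$ and the leading coefficients are tuned, the cancellation you claim impossible does occur, your constraint only yields $v_{F_\infty}\bigl(a(t)+c(t)x^2\bigr)\geq -2\deg a+1$, and to conclude $c\equiv 0$ one must rule out that the valuation climbs two steps rather than one -- a finer statement (essentially controlling the next term of the expansion along the wild fibre, where $h^0(\mathcal{O}_{2F_\infty})=2$) that the proof does not address. Since quasi-elliptic Enriques surfaces are exactly the classical and \emph{supersingular} ones, this gap sits squarely inside the theorem's intended scope. (Two smaller points of the same nature: your parenthetical reason that $a,b,c$ are polynomials ignores possible cancellation of $t$-poles at fibres on which $x$ restricts to a constant, and the whole discussion of $v_{F_\infty}$ tacitly assumes $F_\infty$ irreducible, which nodal Enriques surfaces need not satisfy.)
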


\begin{proof}
Consider the generic fibre $E$ of $f \colon S \rightarrow {\bf P}^1$.  This is a curve of genus $1$ over $k(t)$, and the bisection $C$ gives a point $P$ of degree $2$ on the curve $E$. By the Riemann--Roch theorem for curves, we have $\dim L(nP) = 2n$ for $n \geq 1$.  By the consideration above, we see that $\{ 1, x\}$ is a basis of $L(P)$, and $\{ 1, x, x^2, y\}$ is a basis of $L(2P)$.  It is easy to see that $1, x, x^2, x^3, x^4, y, y^2, xy, x^2y$ are elements of $L(4P)$.  Since $\dim L(4P) = 8$, these nine elements are linearly dependent over $k(t)$. Therefore, Equation \eqref{dependent} is nothing but the desired linear relation over $k(t)$.

We continue to argue with $2P$.  Since this is very ample on $E$, $E$ is embedded into ${\mathbb P}^3$ via $[X_0,X_1,X_2,X_3] = [1,x,x^2,y]$.  But then the image lies on the conic $\{X_0X_2=X_1^2\}$, so we can eliminate $X_2$ and obtain \eqref{dependent} as affine equation over $k(t)$.  This means that our Enriques surface is birationally equivalent to the surface defined by Equation \eqref{dependent}.  This proves the theorem.
\end{proof}

In the next section, we will specialize to the quasi-elliptic setting in characteristic $2$ to derive a much more convenient equation (as ultimately displayed in Theorem~\ref{thm}) which will also lend itself to several applications.

\section{Queen-type equations}
\label{s:Queen}

We now specialize to the situation in characteristic $2$ where the genus~$1$ fibration is quasi-elliptic and the nodal curve $C$ is the curve of cusps.  We will use this setting to simplify  Equation \eqref{dependent}.  Our results should be compared to those of Queen; see \cite{Q1, Q2}.  The main difference is that Queen works over a field, so he can simplify further, while we prefer to preserve some polynomial shape with good control over the degrees involved.
 
We let $K = k(t)$ and consider the generic fibre $E$ of the quasi-elliptic fibration.  This is endowed with the degree $2$ point $P$ at infinity corresponding to the curve of cusps, so we can define functions $x, y$ in complete analogy with Section~\ref{eq:def}.  Consider the degree $2$ extension $K(E)/K(x)$.  In what follows, we distinguish whether this extension is separable or not.

\subsection{Inseparable case}
 
As above, $\{ 1, x, x^2, y\}$ is a basis of $L(2P)$, and we have $K(E) = K(x, y)$.  Since $K(E)/K(x)$ is a purely inseparable extension of degree $2$, we see that $y^2 \in K(x)$.  On the other hand, we have $y^2 \in L(4P)$ and
$$
K + Kx + Kx^2 + Kx^3 + Kx^4 = K(x) \cap L(4P).
$$
Therefore, there exist elements $a, b, c, d, e \in K$ such that
$$
y^2 = ax^4 + bx^3 + cx^2 + dx + e.
$$
Suppose that $b$ is not zero.  Then, differentiating this equation with respect to $x$, we have the singular locus of $E$ defined by $bx^2 + d = 0$, which contradicts our assumption that the infinite point $P$ is the cusp. Thus $b=0$ and
\begin{equation}\label{caseI}
h_4(t)y^2 = g_2(t)x^4 + g_5(t)x^2 + g_{6}(t)x + g_{8}(t).
\end{equation}

\subsection{Separable case}
\label{ss:separable}

We consider $L(4P)$. Then, by the Riemann--Roch theorem, we have $\dim L(4P) = 8$. Since $1$, $x$, $x^2$, $x^3$, $x^4$, $y$, $xy$, $x^2y$ and $y^2$ are elements of $L(4P)$, we have a linear relation
\begin{equation}\label{-}
y^2 + (ax^2 + bx + c)y =  dx^4 + ex^3 + fx^2 + gx + h
\end{equation}
with $a, b, c, d, e, f, g, h \in K$. Note that by considering the pole order at $P$, we see that the coefficients of $y^2$ and of $x^4$ are non-zero, so we can take the coefficient of $y^{2}$ to be $1$ and assume $d \neq 0$.  By the change of coordinates $X = 1/x$ and $Y = y/x^2$, we have
\begin{equation}\label{+}
  Y^2 + (a+ bX + cX^2)Y =  d + eX + fX^2 + gX^3 + hX^4.
\end{equation}
By our assumption, the point $P$ of degree $2$ defined by $X = 0$ is the cusp singularity.  Therefore, differentiating with respect to $X$ and $Y$, we infer that $X = 0$ must be a solution of the equations
$$
a + bX + cX^2 = 0,
\quad
 bY = e + gX^2.
$$
Therefore, we have $a = 0$ and $bY = e$.  Suppose $b \neq 0$. Then, we have $Y = e/b$. Substituting these results in Equation \eqref{+}, we have $(e/b)^2 = d$, and Equation \eqref{-} becomes
$$
     y^2 + (bx + c)y =  (e/b)^2 x^4 + ex^3 + fx^2 + gx + h.
$$
By the change of coordinates $\tilde{y} = y + (e/b)x^2$, this equation is converted to a cubic in $x$ and $y$.  By inspection, we see that it attains a section at infinity.  In particular, this quasi-elliptic surface cannot have multiple fibres, which contradicts our assumption.  Hence, we see that $b = 0$ and $e = 0$, and our equation becomes
$$
 {y}^2 + c{y} =  dx^4 + fx^2 + gx + h.
$$
Since $K(x, y)/K(x)$ is separable, we have $c \neq 0$.  Applying this calculation to Equation \eqref{dependent}, we obtain
\begin{equation}\label{caseII}
h_4(t)y^2 + h_6(t)y  = g_2(t)x^4  + g_5(t)x^2 + g_{6}(t)x + g_{8}(t).
\end{equation}
Note that this contains \eqref{caseI} as a subfamily, though in what follows the two equations will sometimes display quite different behaviour.

\section{General normal form}
\label{ss:charts}

We aim to convert Equations \eqref{caseI} and \eqref{caseII} alike to a general normal form. To this end, we multiply both sides of \eqref{caseII} with $h_{4}$.  Replacing $h_{4}y$ with $y$, we obtain the equation
\begin{equation*}\label{h'4}
y^2 + h_6y  = h_4g_2x^4  + h_4g_5x^2 + h_4g_{6}x + h_4g_{8}.
\end{equation*}
Writing $h_4g_2 = h_3^2+th_2^2$, we can translate $y$ by $h_3x^2$ to get
\[
y^2 + h_6y  = th_2^2x^4  + (h_4g_5+h_3h_6)x^2 + h_4g_{6}x + h_4g_{8}.
\]
Dividing $x$ and $y$ by $h_2$, this leads to
\begin{eqnarray}
\label{eq:normal}
y^2 + h_2h_6y  = tx^4  + (h_4g_5+h_3h_6)x^2 + h_2h_4g_{6}x + h_2^2h_4g_{8}.
\end{eqnarray}
We could continue by analysing this equation (for instance the special fibres at the zeroes of $h_2$, or the purported multiple fibre at $\infty$), but for the sake of a unified treatment, we will content ourselves with the overall shape of a complete model.  To this end, we attach the weights $9$ to $y$ and $4$ to $x$ and homogenize \eqref{eq:normal} as an equation of degree $18$ in $\PP[1,1,4,9]$:
\begin{eqnarray}
\label{eq:homog}
y^2 + a_9y  = stx^4  + a_{10}x^2 + a_{14}x + a_{18}.
\end{eqnarray}
Here and in what follows, the $a_i$ will be regarded as homogeneous polynomials in $k[s,t]$ of degree given exactly by the index, though we will take the liberty to suppress $s$ from notation for ease of presentation.  If necessary, a complete model of the surface can be described by four affine charts, namely
\begin{enumerate}[(1)]
\item
the chart in \eqref{eq:normal} and those obtained from it as follows:
\item
the chart with affine coordinates $X=1/x, Y=y/x^2, t$ as in Section~\ref{ss:separable};
\item
the standard chart at $t=\infty$ with affine coordinates $s=1/t, u=x/t^4, v=y/t^9$;
\item
the chart analogous to the second one with coordinates $U=1/u, V=v/u^2, s$.
\end{enumerate}
Note that the shape of \eqref{eq:homog}, including the coefficient of $x^4$, is preserved by the admissible coordinate transformations
\begin{eqnarray}
\label{eq:admissible}
(x,y) \longmapsto (x+b_4, y +b_5x+b_9), 
\end{eqnarray}
where $b_i\in k[t]$ is of degree $i$.  (Additionally, we could also rescale coordinates $(x,y)\mapsto (ux,u^2y)$ for $u\in k^\times$, which will appear later in Lemma~\ref{lem:iso}).

\begin{lemma}
\label{lem:unique}
There is an admissible transformation converting \eqref{eq:homog} to
\begin{eqnarray}
\label{eq:unique}
S: \;\; y^2 + a_9y  = tx^4  + ta_{4}^2x^2 + a_{14}x + t^3a_{3}^4.
\end{eqnarray}
\end{lemma}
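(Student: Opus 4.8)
The plan is to use only the three elementary moves making up \eqref{eq:admissible} and to normalize the coefficients one block at a time, using repeatedly that $k$ is perfect so that squares and fourth powers can be extracted coefficientwise. First I would record the effect of the general substitution $(x,y)\mapsto(x+b_4,\,y+b_5x+b_9)$ on \eqref{eq:homog}. In characteristic $2$ all cross terms drop out, and a direct substitution shows that the coefficient $st$ of $x^4$ is preserved while
\[
a_{10}\longmapsto a_{10}+b_5^2,\qquad a_{14}\longmapsto a_{14}+a_9b_5,
\]
\[
a_{18}\longmapsto a_{18}+b_9^2+a_9b_9+st\,b_4^4+a_{10}b_4^2+a_{14}b_4 .
\]
Thus $b_5$ governs the $x^2$-coefficient while $(b_9,b_4)$ jointly govern the constant term; the $x$-coefficient is never prescribed in \eqref{eq:unique}, so I let it absorb whatever changes occur.

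The $x^2$-coefficient is the easy, model case. Writing $a_{10}$ as the sum of its even- and odd-degree parts in $t$, the even part is a perfect square $b_5^2$ with $\deg b_5\le 5$ (here perfectness of $k$ is used) and the odd part is $t$ times a square, i.e.\ $st\,a_4^2$ with $\deg a_4\le 4$. For this choice of $b_5$ the new $x^2$-coefficient is $a_{10}+b_5^2=st\,a_4^2$, as required, and the accompanying change of $a_{14}$ is irrelevant. After this step I may apply a further admissible transformation with $b_5=0$, so that treating the constant term (using $a_{10}=st\,a_4^2$) disturbs neither the coefficient of $x^4$, nor $x^2$, nor $x$.

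The constant term is the crux. Homogeneously, $s^3t^3a_3^4$ is exactly the span of the monomials $s^{18-j}t^j$ with $j\equiv 3\pmod 4$, and over the perfect field $k$ every element of that span is automatically of the shape $s^3t^3a_3^4$; hence it suffices to annihilate all monomials of the constant term whose $t$-degree $j$ satisfies $j\not\equiv 3\pmod 4$. There are exactly fifteen such ``bad'' degrees, matching the $10+5$ coefficients of $(b_9,b_4)$. The difficulty --- the step I expect to be the main obstacle --- is that $b_9$ and $b_4$ do not contribute only the clean terms $b_9^2$ (supported in even degrees) and $st\,b_4^4$ (supported in degrees $\equiv 1\pmod 4$): they also drag along the mixed terms $a_9b_9$, $a_{14}b_4$ and $st\,a_4^2b_4^2$, which spread over all degrees and couple the fifteen equations nonlinearly.

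I would resolve this by a finiteness argument. For fixed $a_9,a_{14},a_4$ the total correction
\[
\Psi\colon (b_9,b_4)\longmapsto b_9^2+a_9b_9+st\,b_4^4+st\,a_4^2b_4^2+a_{14}b_4
\]
is additive (a sum of maps that are additive in $b_9$ and in $b_4$ separately), so, composed with the projection onto the bad-degree coordinates, it is a homomorphism $\bbG_a^{15}\to\bbG_a^{15}$; its image is therefore a closed, connected subgroup. Writing $b_9=\sum\beta_it^i$ and $b_4=\sum\gamma_jt^j$, the coefficient of $t^{2i}$ receives the summand $\beta_i^2$ from $b_9^2$ and the coefficient of $t^{4j+1}$ receives $\gamma_j^4$ from $st\,b_4^4$; hence each $\beta_i$ (resp.\ each $\gamma_j$) satisfies a monic equation of degree $2$ (resp.\ $4$) over the algebra generated by the bad-degree outputs and the remaining coordinates. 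Thus $\Psi$ is a finite morphism, its image has full dimension $15$, and a full-dimensional connected subgroup of $\bbG_a^{15}$ is all of it, so $\Psi$ is surjective. I may then cancel the bad part of $a_{18}$, the surviving degrees $\equiv 3\pmod 4$ assemble into $s^3t^3a_3^4$, and composing the $b_5$-step with this one yields a single admissible transformation carrying \eqref{eq:homog} to \eqref{eq:unique}. Throughout, the two features doing the real work are the perfectness of $k$ and the collapse of cross terms in characteristic $2$.
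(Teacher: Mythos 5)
Your bookkeeping is correct and your route is genuinely different from the paper's: there are indeed no $b_4b_5$ cross terms in characteristic $2$, the $b_5$-step normalizes the $x^2$-coefficient exactly as you say, the admissible constant terms are exactly the span of the monomials $t^js^{18-j}$ with $j\equiv 3\pmod 4$, and performing the $b_5$-step \emph{first} is a real simplification: it makes the new $a_{10}=st\,a_4^2$ purely odd, so the term $a_{10}b_4^2$ contributes nothing to the even-degree equations, and the leading forms of your $15$ equations are just $\beta_i^2$ and $\gamma_j^4$. (The paper instead solves one $21\times 21$ system for $(b_5,b_9,b_4)$ simultaneously, and must square the even-degree equations and subtract multiples of the others precisely to remove the $\beta_{4,l}$-terms that your decoupling kills automatically, before its Lemma~\ref{lem:aux} applies.) The additivity of $\Psi$ and the fact that the image of a homomorphism of algebraic groups is a closed connected subgroup are also fine.

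The genuine gap is the finiteness claim for $\Psi$. Knowing that each $\beta_i$ (resp.\ $\gamma_j$) is integral over the algebra generated by the outputs \emph{and the remaining coordinates} does not imply that $k[\beta,\gamma]$ is finite over the algebra generated by the outputs alone: such statements do not chain, since each one allows all the other variables as coefficients. Concretely, for $R=k[x^2+y]\subset A=k[x,y]$ one has $x$ monic-integral over $R[y]$ and $y\in R[x]$, yet $A$ is not finite over $R$ (dimension $2$ versus $1$); so your reasoning pattern is invalid as stated. The fix is easy and, tellingly, is the same projective argument the paper uses: since $\Psi$ is a homomorphism, surjectivity follows once the kernel is finite, and the kernel is cut out by polynomials whose leading forms $\beta_i^2$ $(0\le i\le 9)$ and $\gamma_j^4$ $(0\le j\le 4)$ have no common zero other than the origin; homogenizing as in the proofs of Lemma~\ref{lem:aux} and Corollary~\ref{cor:rigid}, the kernel has no points at infinity, hence is finite, hence the image is a closed connected $15$-dimensional subgroup of $\bbG_a^{15}$ and therefore everything. (Alternatively, once your equations are in this shape you could skip the group theory entirely and apply Lemma~\ref{lem:aux} directly to the $15$ inhomogeneous equations.) With that one step repaired, your proof is complete, and arguably cleaner than the paper's because the decoupling removes the need for the squaring-and-elimination manipulation.
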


\begin{remark}\label{rem:choice}\leavevmode
\begin{enumerate}
\item\label{r:c-1}
The shape of \eqref{eq:unique} is symmetric in $t$ and the suppressed homogenizing variable $s$.
This will be quite useful later when we locate the multiple fibres (in the classical case) at $0$ and $\infty$; 
\textit{cf.} \eqref{eq:classical}.
\item\label{r:c-2}
Note that we have not made any assumption on the special fibres at $t$ (or $\infty$) yet,
so \eqref{eq:unique} is universally valid locally at any given fibre
(but  with coefficients depending on the choice of fibre).
\end{enumerate}
\end{remark}

\begin{proof}
The proof of Lemma~\ref{lem:unique} relies on the following easy general  result. 

\begin{lemma}
\label{lem:aux}
Let $k$ be an algebraically closed field.  Let $n\in\mathbb N$ and $h_1, \hdots, h_n \in k[z_1,\hdots,z_n]$ be such that for each $i$, 
\[
h_i = z_i^{d_i} + (\text{terms of total degree } < d_i).
\]
Then there is a common zero of all $h_i$ in $k^n$.
\end{lemma}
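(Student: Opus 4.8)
The plan is to projectivize and invoke the elementary fact that $n$ hypersurfaces in $\PP^n$ always meet. First I would dispose of a degenerate point: we may assume every $d_i\geq 1$, since if some $d_i=0$ then $h_i=z_i^0=1$ is a nonzero constant with no zero at all, so the statement would be false; in all intended applications the $d_i$ are positive. Introduce a homogenizing coordinate $z_0$ and let $H_i\in k[z_0,z_1,\dots,z_n]$ be the degree-$d_i$ homogenization of $h_i$, so that each $H_i$ cuts out a hypersurface $V(H_i)\subseteq\PP^n$ with coordinates $[z_0:z_1:\cdots:z_n]$, and the standard affine chart $\{z_0\neq 0\}$ recovers the original $k^n$ on which $h_i=0$.

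Next I would record the single nontrivial input: over an algebraically closed field, any $n$ hypersurfaces in $\PP^n$ have nonempty intersection. This follows from the projective dimension theorem. Choosing an irreducible component $Y_1$ of $V(H_1)$, one has $\dim Y_1=n-1$; as long as the current dimension is $\geq 1$, intersecting with the next hypersurface $V(H_{j+1})$ stays nonempty and drops the dimension by at most $1$, so picking a component $Y_{j+1}$ of $Y_j\cap V(H_{j+1})$ at each step produces a nonempty $Y_n\subseteq V(H_1)\cap\cdots\cap V(H_n)$ of dimension $\geq 0$. (For $n=1$ this is just the fact that the nonconstant homogeneous polynomial $H_1$ has a zero in $\PP^1$.) Hence there exists a point $Q=[q_0:q_1:\cdots:q_n]\in\PP^n$ with $H_i(Q)=0$ for all $i$.

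Then I would rule out that $Q$ lies on the hyperplane at infinity $\{z_0=0\}$. Restricting $H_i$ to $z_0=0$ leaves precisely the top-degree part of $h_i$, which by hypothesis equals $z_i^{d_i}$. So if $q_0=0$, the relations $H_i(Q)=0$ force $q_i^{d_i}=0$, hence $q_i=0$, for every $i=1,\dots,n$; together with $q_0=0$ this would make all coordinates vanish, contradicting that $Q$ is a point of $\PP^n$. Therefore $q_0\neq 0$, and dehomogenizing yields $(q_1/q_0,\dots,q_n/q_0)\in k^n$, a common zero of all the $h_i$.

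The only genuine ingredient is the nonemptiness of an $n$-fold intersection of hypersurfaces in $\PP^n$ (equivalently, the projective dimension theorem); the remaining steps — homogenizing and checking that the prescribed leading monomials $z_i^{d_i}$ leave no common zero at infinity — are routine bookkeeping. Accordingly I do not expect a real obstacle here, only the need to state the projective input cleanly and to keep track of the degenerate $d_i=0$ case.
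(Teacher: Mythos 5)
Your proof is correct and follows essentially the same route as the paper: homogenize with an extra variable $z_0$, deduce a common zero in $\PP^n(k)$ from the nonemptiness of an $n$-fold intersection of hypersurfaces, and use the leading monomials $z_i^{d_i}$ to rule out solutions on the hyperplane $z_0=0$. The paper leaves the projective input and the dehomogenization implicit, whereas you spell them out (and also flag the degenerate $d_i=0$ case), but the argument is the same.
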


\begin{proof}[Proof of Lemma~\ref{lem:aux}]
Homogenizing the equations by an additional variable $z_0$, we deduce that there is a solution in $\PP^n(k)$.  The degree assumptions directly imply that there is no solution in the hyperplane $z_0=0$, so the claim follows.
\end{proof}

To continue the proof of Lemma~\ref{lem:unique}, we spell out the coefficients of \eqref{eq:homog} and \eqref{eq:admissible} as
\[
a_i = \sum_{j=0}^i \alpha_{i,j}t^i,\quad b_i =   \sum_{j=0}^i \beta_{i,j}t^i
\]
Converting \eqref{eq:homog} to \eqref{eq:unique} by way of the admissible transformation \eqref{eq:admissible} amounts to solving the following system of 21 equations:
\begin{align*}
0  = \alpha_{10,2j} & =   \beta_{5,j}^2, \\ 
0  = \alpha_{18,2j} & =    \beta_{9,j}^2 + \sum_{l=0}^{2j} \alpha_{9,l}\beta_{9,2j-l} + \sum_{l=0}^{4} \alpha_{10,2j-2l}\beta^2_{4,l},\\
& \hphantom{= \beta_{9,j}^2}\;
+\sum_{l=0}^4 \alpha_{14,2j-l}\beta_{4,l}
 \quad (j=0,\hdots,9)\\
 0  = \alpha_{18,4j+1} & = \beta_{4,j}^4 + \sum_{l=0}^{2j} \alpha_{10,4j+1-2l} \beta_{4,l}^2 + \sum_{l=0}^{4j+1} \alpha_{14,l}\beta_{4,4j+1-l}\\
& \hphantom{= \beta_{4,j}^4}\;  + \sum_{l=0}^{4j+1} \alpha_{9,l} \beta_{9,4j+1-l} \quad (j=0,\hdots,4). 
\end{align*}
Considering the $\beta_{i,j}$ as variables (21 in number), we would like to apply Lemma~\ref{lem:aux}.  To this end, we square the middle equations and subtract appropriate multiples of the lower ones.  This does not alter the zero set but eliminates the highest powers of $\beta_{4,l}$ in the middle expressions.  Therefore, the conditions of Lemma~\ref{lem:aux} are satisfied by the new system of equations, and the claim follows.
\end{proof}

\begin{corollary}
\label{cor:rigid}
The set of admissible transformations \eqref{eq:admissible} converting \eqref{eq:homog} to \eqref{eq:unique} is finite.
\end{corollary}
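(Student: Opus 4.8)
The plan is to reduce Corollary~\ref{cor:rigid} to a statement about the affine zero set of the very system of $21$ equations written down in the proof of Lemma~\ref{lem:unique}. An admissible transformation \eqref{eq:admissible} is uniquely determined by the tuple of its $21$ coefficients $\beta_{4,j},\beta_{5,j},\beta_{9,j}$, so the set of transformations carrying \eqref{eq:homog} into the shape \eqref{eq:unique} is in bijection with the set of solutions in $\bbA^{21}$ (the affine space of the $\beta$'s) of that system. Hence it suffices to prove that this affine zero set is \emph{finite}, i.e.\ to upgrade the existence statement of Lemma~\ref{lem:aux} to a finiteness statement.

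First I would recall from the proof of Lemma~\ref{lem:unique} the modified system $h_1,\dots,h_{21}\in k[z_1,\dots,z_{21}]$, where the $z_i$ relabel the $21$ variables $\beta_{i,j}$: after squaring the middle equations and subtracting suitable multiples of the lower ones, one obtains a system with the \emph{same} zero set in which, under a bijective pairing of variables with equations, each $h_i$ has the form $h_i = z_i^{d_i} + (\text{terms of total degree} < d_i)$ with $d_i\in\{2,4\}$. The decisive structural point, which I would isolate explicitly, is that the top-degree part (leading form) of $h_i$ is then \emph{exactly} the pure power $z_i^{d_i}$; this is precisely what the elimination step achieves, and it is the reason the unmodified middle equations -- whose degree-$2$ parts mix $\beta_{9,j}^2$ with $\beta_{4,l}^2$ terms -- would not suffice.

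I would then carry the homogenization from the proof of Lemma~\ref{lem:aux} one step further. Homogenizing $h_1,\dots,h_{21}$ by an extra variable $z_0$ produces homogeneous $H_1,\dots,H_{21}$ and a projective scheme $V(H_1,\dots,H_{21})\subseteq\PP^{21}$. Its intersection with the hyperplane at infinity $\{z_0=0\}$ is cut out by the leading forms $z_i^{d_i}$, which force $z_1=\dots=z_{21}=0$ and hence define the empty set in $\PP^{20}$. Therefore $V(H_1,\dots,H_{21})$ is a closed, hence complete, subvariety of $\PP^{21}$ that is entirely contained in the affine chart $\{z_0\neq 0\}\cong\bbA^{21}$; being at once complete and affine, it is finite. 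Under this identification it coincides with the affine zero set $V(h_1,\dots,h_{21})$, which is therefore finite (and in fact bounded by $\prod_i d_i$ via B\'ezout), proving the corollary.

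The genuinely new input beyond Lemma~\ref{lem:unique} is mild: the existence argument of Lemma~\ref{lem:aux} already exploits that the system has no zero at infinity, and here one merely adds that the number of equations equals the number of variables, so that emptiness at infinity confines the projective solution scheme to an affine chart and forces dimension zero. The one point requiring genuine care -- and the closest thing to an obstacle -- is ensuring that the relevant leading forms are the \emph{pure} powers $z_i^{d_i}$ rather than the mixed degree-$2$ forms of the unmodified middle equations; this is guaranteed by the squaring-and-subtracting normalization already performed in the proof of Lemma~\ref{lem:unique}, so no further computation is needed.
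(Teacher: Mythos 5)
Your proposal is correct and follows essentially the same route as the paper: both identify the transformations with the affine zero set of the $21$ equations from the proof of Lemma~\ref{lem:unique} and exploit the emptiness at infinity of the homogenized system (guaranteed by the pure-power leading forms after the squaring-and-subtracting step). The only cosmetic difference is the final step, where you invoke ``complete and affine implies finite'' while the paper argues that a positive-dimensional projective closure would have to meet the hyperplane $\{z_0=0\}$ -- two phrasings of the same standard fact.
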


\begin{proof}
This is implicit in the proof of Lemma~\ref{lem:unique}.  Consider the zero set $Z\subset\mathbb A^{21}$ given by the 21 equations above.  If $Z$ were positive-dimensional, then $\bar Z\subset \PP^{21}$ would intersect any hyperplane non-trivially.  However, we argued that $\bar Z\cap Z(z_0)=\emptyset$, so this would lead to a contradiction.
\end{proof}

Philosophically, one should consider \eqref{eq:unique} as a replacement of the Weierstrass equation of a smooth genus~$1$ curve with a rational point.  Indeed, we shall see soon that in the case of (quasi-elliptic) Enriques surfaces, it shares many convenient features with the standard Weierstrass form.  For instance, we will see this in action when working out explicit linear systems in Sections~\ref{ss:62} and~\ref{ss:VIII}.

The analogous equations for general nodal Enriques surfaces (without the assumption of being quasi-elliptic,\and in fact in any characteristic) are to be exploited in future work.

\section{Relative Jacobian}
\label{s:rJ}

In this section we work out the Weierstrass form of the relative Jacobian of the quasi-elliptic fibration \eqref{eq:unique}.  By Queen \cite{Q2}, the relative Jacobian of \eqref{eq:unique} is given by omitting the constant term:
\begin{eqnarray}
\label{eq:Jac}
\Jac(S): \;\; y^2 + a_9y  = tx^4  + ta_{4}^2x^2 + a_{14}x.
\end{eqnarray}

\begin{lemma}
\label{lem:jac}
The relative Jacobian admits the Weierstrass form
\begin{eqnarray}
\label{eq:Jac_WF}
\label{form2}
Y^2 = X^3 + (a_9^2t+a_4^4t^2)\,X
+ a_{14}^2t
\end{eqnarray}
\end{lemma}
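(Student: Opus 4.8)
The plan is to exhibit an explicit birational change of variables over $K=k(t)$ carrying \eqref{eq:Jac} to short Weierstrass form and then read off the coefficients. The first observation is that \eqref{eq:Jac} has no constant term, so $(x,y)=(0,0)$ is a $K$-rational point; moreover the partial derivatives of its defining polynomial are the constants $a_{14}$ and $a_9$ (in characteristic $2$), so the affine curve is smooth and its only singular point, the cusp, is the degree-two point at infinity. Hence \eqref{eq:Jac} is $K$-isomorphic to its Jacobian, and it suffices to put the genus-one curve with marked point $(0,0)$ into the form $Y^2=X^3+AX+B$ and to identify $A$ and $B$.

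The computational engine is the characteristic-two identity $tx^4+ta_4^2x^2=t(x^2+a_4x)^2$, which lets me rewrite the right-hand side of \eqref{eq:Jac} as $tw^2+a_{14}x$ with $w=x^2+a_4x$. Passing to the inseparable extension $K(\sqrt t)$ and completing via $z=y+\sqrt t\,w$ gives $z^2=y^2+tw^2$, and substituting reduces \eqref{eq:Jac} to $z^2+a_9z=a_9\sqrt t\,x^2+(a_{14}+a_9a_4\sqrt t)x$. This exposes the cuspidal (additive) structure and lets me match it with the target: the cusp of $Y^2=X^3+AX+B$ sits at $X_0=\sqrt A$ with $A=t(a_9+\sqrt t\,a_4^2)^2$, i.e. $X_0=\sqrt t\,a_9+ta_4^2$ and $Y_0=\sqrt t\,a_{14}$, after which translating the cusp to the origin brings the Weierstrass curve to $Y'^2=X'^2(X'+X_0)$. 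Identifying the two cuspidal cubics (equivalently, building Weierstrass coordinates $X,Y$ with the prescribed pole behaviour at $(0,0)$) produces the cubic relation, and squaring out the factors of $\sqrt t$ yields $A=t(a_9+\sqrt t\,a_4^2)^2=a_9^2t+a_4^4t^2$ and $B=(\sqrt t\,a_{14})^2=a_{14}^2t$, both in $K$, exactly as claimed. In practice I would guess the resulting substitution from this analysis and then simply verify \eqref{eq:Jac_WF} by direct back-substitution into \eqref{eq:Jac}.

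The main obstacle is that the generic fibre here is geometrically a cuspidal cubic (arithmetic genus one, geometric genus zero), so the naive recipe of constructing $X\in L(2O)$ and $Y\in L(3O)$ on a smooth genus-one curve is unavailable; one must work with the cusp directly, or with the generalized Jacobian in the sense of Queen \cite{Q2}. Consequently every natural intermediate computation runs over the inseparable extension $K(\sqrt t)$, and the real content of the proof is to check that the change of variables descends to $K$ and reorganizes the stray factors of $\sqrt t$ into precisely the squares $a_9^2t+a_4^4t^2$ and $a_{14}^2t$. Keeping track of the admissible normalizations so that no spurious cross terms survive, and confirming that the leading coefficient of $x^4$ becomes the cubic leading coefficient $1$, is where the bookkeeping must be carried out carefully.
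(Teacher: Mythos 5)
Your preparatory computations are all correct: the rewriting $tx^4+ta_4^2x^2=t(x^2+a_4x)^2$, the substitution $z=y+\sqrt{t}\,w$ over $K(\sqrt{t})$ leading to $z^2+a_9z=a_9\sqrt{t}\,x^2+(a_{14}+a_9a_4\sqrt{t})x$, and the location of the cusp $(\sqrt{A},\sqrt{B})=(\sqrt{t}\,a_9+ta_4^2,\ \sqrt{t}\,a_{14})$ of the target curve with the ensuing normal form $Y'^2=X'^2(X'+X_0)$. But the central step is a genuine gap. ``Identifying the two cuspidal cubics'' is not an argument: over $\overline{K}$ all cuspidal cubics are isomorphic, and over $K(\sqrt{t})$ the smooth loci of both your curve and the translated Weierstrass curve are forms of $\mathbb{G}_a$, so \emph{some} identification exists whatever the pair $(A,B)$ is; for instance the rescaled pairs $(u^4A,u^6B)$, $u\in K^\times$, give isomorphic curves with equally rational cusps. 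The coefficients are only pinned down by an identification compatible with the $K$-structures on both sides, i.e.\ one that descends to $K$ --- and since $K(\sqrt{t})/K$ is purely inseparable, there is no Galois descent to invoke: the only way to see that the composite map is defined over $K$ is to exhibit it. Your derivation is moreover circular in how it ``produces'' $A$ and $B$: you take them from the statement of the lemma, compute the cusp from them, and then assert that the identification returns them.

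The missing content is precisely what the paper's proof supplies: an explicit chain of substitutions over $K$ itself. For $a_9\not\equiv 0$, a translation of $y$ kills the $x$- and $x^2$-terms (Queen's second standard form $y_1^2+a_9y_1=hx^4$), the map $x=x_2/y_2$, $y_1=x_2/y_2^2$ converts the quartic into a cubic, and a rescaling followed by the linear substitution $x_3=a_9^2X+a_{14}^2$, $y_3=a_9^3Y+a_{14}a_9^2X+a_{14}a_4^2a_9^2t$ lands exactly on \eqref{eq:Jac_WF}; for $a_9\equiv 0$ one instead uses the chart at infinity, which is immediately a cubic. Your closing sentence --- that you ``would guess the resulting substitution from this analysis and then simply verify by direct back-substitution'' --- concedes that this step is absent; the substitution \emph{is} the proof, and it is never written down. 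As a heuristic, your inseparable base-change picture is a sensible and genuinely different way to guess the answer (and it explains structurally why the coefficients are the squares $a_9^2t+a_4^4t^2$ and $a_{14}^2t$), but as it stands it does not establish the lemma.
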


\begin{proof}
To convert \eqref{eq:Jac} to Weierstrass form, we formally have to distinguish whether $a_9\not\equiv 0$ or not.

Assume that {$a_9\not\equiv 0$}.  We convert to Queen's second standard from \cite{Q1} by the change of coordinates
$$
  y = y_1 + \frac{a_{14}}{a_9}x + 
 \left(\frac{a_4^2t}{a_9} + \frac{a_{14}^2}{a_9^3}\right)x^2, 
$$
as we get
$$
  y_1^2 + a_9y_1 =  \underbrace{\left(t+\frac{a_4^4t^2}{a_9^2} + \frac{a_{14}^4}{a_9^6}\right)}_hx^4.
$$
The change of coordinates $x=x_2/y_2$, $y_1 = x_2/y_2^2$
gives the cubic
$$
  x_2 + a_9y_2^2=
  hx_2^3.
$$
Writing $y_2 = y_3/a_9^5h$, $x_2 = x_3/a_9^3h$, we derive the following equation which is monic in $x_3$ and in $y_3$:
\[
y_3^2 = x_3^3 + (a_4^4a_9^4t^2 + a_9^6t + a_{14}^4)x_3.
\]
This simplifies further by setting 
$$
x_3 = a_9^2 X + a_{14}^2, \quad  y_3 = a_9^3Y + a_{14}a_9^2X + a_{14}a_4^2a_9^2t
$$
and results exactly in the Weierstrass form \eqref{eq:Jac_WF}.

If {$a_9\equiv 0$}, then the affine chart at $\infty$ with coordinates $Y=y/x^2$, $X= 1/x$ readily returns a cubic starting from \eqref{eq:Jac}.  This is easily transformed into Weierstrass form -- and  yields exactly \eqref{form2} with $a_9\equiv 0$.
\end{proof}

\section{Rationality vs minimality}

By \cite{BM}, if $S$ is an Enriques surface, then the relative Jacobian $\Jac(S)$ is a rational surface.  This property will be analysed using the discriminant of \eqref{eq:Jac_WF},
 \[
\Delta =  (a_9^2t+a_4^4t^2)a_9^4+a_{14}^4,
\]
a homogeneous polynomial of degree $56$.  It follows from \cite[Proposition~5.5.3]{CD} that $\Delta$ is unique up to multiplication by $\supth{12}$ powers; this ambiguity is related to non-minimal Weierstrass forms and will play an important role in the proof of the following result.
 
\begin{proposition}
\label{prop:rat-div}
The relative Jacobian of a quasi-elliptic surface with normal form \eqref{eq:unique} is rational if and only if there are polynomials $g$ and $a_i$ $($of degree $i\,)$ such that \eqref{eq:unique} reads
 \begin{eqnarray}
\label{eq:model}
y^2 + a_1 g^2 y = tx^4 + a_0tg^2 x^2 + a_2 g^3 x + t^3 a_3^4
\end{eqnarray}
and $\Delta\not\equiv 0$.
 \end{proposition}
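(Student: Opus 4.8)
The plan is to detect rationality of $\Jac(S)$ from its Weierstrass form \eqref{eq:Jac_WF} by a minimality count, and then convert the resulting divisibilities into the shape \eqref{eq:model}, the essential input being the behaviour of squares and fourth powers in characteristic $2$. By Lemma~\ref{lem:jac} the Jacobian has Weierstrass coefficients $A=a_9^2t+a_4^4t^2$ and $B=a_{14}^2t$, of degrees $20$ and $30$, so \eqref{eq:Jac_WF} is a (highly non-minimal) model with $\chi=5$. A quasi-elliptic surface with section over $\PP^1$ is rational exactly when its minimal Weierstrass model has $\chi=1$, that is $\deg A_{\min}\le 4$ and $\deg B_{\min}\le 6$. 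By \cite{CD} the discriminant is well defined up to $12$th powers, which is precisely the statement that one minimalizes by the scalings $(X,Y)\mapsto(g^2X,g^3Y)$, each lowering $\chi$ by $\deg g$ and being admissible exactly when $g^4\mid A$ and $g^6\mid B$. Starting from $\chi=5$, reaching $\chi=1$ forces a factor $g$ of degree $\geq 4$, while $\deg g>4$ would give $\chi<1$, which is impossible once $\Delta\not\equiv 0$. Hence rationality is equivalent to the existence of $g$ with $\deg g=4$, $g^4\mid A$ and $g^6\mid B$ (and $\Delta\not\equiv 0$), the reduced pair $(A/g^4,B/g^6)$ then being a minimal rational model.

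For the implication ($\Leftarrow$) this is a direct substitution: writing \eqref{eq:unique} in the form \eqref{eq:model} means $a_9=a_1g^2$, $a_4^2=a_0g^2$ and $a_{14}=a_2g^3$, whence $A=g^4t(a_1^2+a_0^2t)$ and $B=a_2^2g^6t$. Scaling by $g$ then produces $Y^2=X^3+t(a_1^2+a_0^2t)X+a_2^2t$, the Weierstrass form of a rational quasi-elliptic surface (compare the equation for $X$ in Theorem~\ref{thm2}), and the hypothesis $\Delta\not\equiv 0$ guarantees this is a genuine surface rather than a degeneration.

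The core is the implication ($\Rightarrow$): given such a $g$, I extract the separate divisibilities $g^2\mid a_9$, $g\mid a_4$ and $g^3\mid a_{14}$. In characteristic $2$ one computes $A'=(a_9^2t+a_4^4t^2)'=a_9^2$, so for a root $\alpha$ of $g$ of multiplicity $\mu$ the relation $(t-\alpha)^{4\mu}\mid A$ yields $(t-\alpha)^{4\mu-1}\mid a_9^2$; as $a_9^2$ is a square and $4\mu-1$ is odd, in fact $(t-\alpha)^{2\mu}\mid a_9$, so that $g^2\mid a_9$ and $a_9=a_1g^2$ with $\deg a_1=1$. Subtracting $a_9^2t=a_1^2g^4t$ from $A$ leaves $g^4\mid a_4^4t^2$; comparing valuations at the roots of $g$ — and, in the case $t\mid g$, using that $a_4^4$ is a fourth power to round the $t$-adic valuation up — gives $g\mid a_4$, hence $a_4=\sqrt{a_0}\,g$ with $a_0\in k$ constant since $\deg a_4=\deg g=4$. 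The identical square-root-of-divisibility argument applied to $g^6\mid a_{14}^2t$ gives $g^3\mid a_{14}$, i.e.\ $a_{14}=a_2g^3$ with $\deg a_2=2$. Collecting these three identities rewrites \eqref{eq:unique} as \eqref{eq:model}.

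I expect the main obstacle to be this ($\Rightarrow$) extraction in characteristic $2$: one must pass from the single divisibilities $g^4\mid A$ and $g^6\mid B$ to the individual factorizations of $a_9$, $a_4$ and $a_{14}$, and do so uniformly in the degenerate cases $a_9\equiv 0$ and $t\mid g$, where the derivative identity $A'=a_9^2$ loses force and the valuation bookkeeping at $t=0$ must be carried out by hand. The secondary delicate point is pinning down that the removable degree is exactly $4$ rather than more, which is precisely where the hypothesis $\Delta\not\equiv 0$ enters to exclude over-reduction to $\chi<1$.
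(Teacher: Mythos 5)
Your proof breaks at the step where you pass from rationality to the coefficient divisibilities $g^4\mid A$ and $g^6\mid B$. For the quasi-elliptic Weierstrass form $Y^2=X^3+AX+B$ in characteristic $2$, the admissible coordinate changes are not only the scalings $(X,Y)\mapsto(g^2X,g^3Y)$: one may also apply $X\mapsto u^2X+s^2$, $Y\mapsto u^3Y+u^2sX+w$, which transforms $A\mapsto u^{-4}(A+s^4)$ and $B\mapsto u^{-6}(B+As^2+s^6+w^2)$. Consequently, non-minimality at a place $v$ does \emph{not} imply $v(A)\geq 4$ and $v(B)\geq 6$; for instance $Y^2=X^3+X+t^7$ is non-minimal at $t=0$ (it becomes $Y^2=X^3+t$ via $X\mapsto t^2X+1$, $Y\mapsto t^3Y+t^2X$) although $v_0(A)=0$. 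The fact you quote from \cite[Proposition~5.5.3]{CD} is the invariance of the discriminant up to $12$th powers under \emph{all} admissible transformations; it does not say that minimalization proceeds by scalings alone. So from rationality you are only entitled to conclude $g^{12}\mid\Delta$ for some degree-$4$ polynomial $g$ -- which is exactly where the paper starts -- and your subsequent valuation bookkeeping, which takes $g^4\mid A$ and $g^6\mid B$ as given, assumes precisely what has to be proved.

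This is not a presentational slip but the mathematical core of the proposition. From $g^{12}\mid\Delta$ the characteristic-$2$ derivative trick (which you use correctly) gives $g^{12}\mid\Delta'=a_9^6$, hence $g^2\mid a_9$, and then, splitting into even and odd parts, $g^3\mid a_{14}$ -- but only $g\mid a_4a_1$, \emph{not} $g\mid a_4$: the missing factor of $a_4$ can a priori be compensated by extra vanishing of $a_1$. Ruling this out is the paper's Lemma~\ref{lem:div}, and its proof is geometric rather than valuation-theoretic: assuming $\ell^m\,\|\,g$ but $\ell^m\nmid a_4$, one gets $\ell^{2m+1}\mid a_9$, minimalizes the Weierstrass form $m-1$ times at $\ell$, and finds that the resulting model is minimal with a fibre of type $\I_n^*$, $n\geq 8$; such a fibre contributes more than $12$ to the Euler--Poincar\'e characteristic, which is incompatible with $\Jac(S)$ being rational. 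Your proposal contains no substitute for this step -- your ``round the valuation up'' argument at $t\mid g$ only works once $g^4\mid A$ is known -- so the forward implication remains unproved. (Your converse direction, by substitution and appeal to Ito's classification, is fine and matches the paper.)
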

 
 \begin{proof}
The main subtlety is that the Weierstrass form \eqref{form2} may (and has to) be non-minimal at certain places of $\PP^1$.  Indeed, after minimalizing, rationality requires by \cite{I} that the resulting discriminant has degree~$8$ as a homogeneous polynomial, \textit{i.e.}~including contributions at $\infty$.  Thus the Weierstrass form \eqref{eq:Jac_WF} is highly non-minimal; in terms of the discriminant, there is a degree $4$ polynomial $g$ such that
\[
g^{12}\mid \Delta.
\]
As one of the special features of characteristic $2$, we have the same divisibility property for the formal derivative:
\[
g^{12} \mid \Delta' = a_9^6 \quad \Longrightarrow \quad g^{2} \mid a_9=g^{2}a_1.
\]
In turn, the shape of $\Delta$ then implies that $g^{2}\mid a_{14}= g^{2} a_6$, and moreover
\[
g^{4} \mid a_4^4t^2a_1^4+a_6^4 \quad \Longrightarrow \quad g^{2} \mid ta_4^2a_1^2+a_6^2.
\]
Since the last sum decomposes into an even and an odd part, we deduce as before, using the formal derivative, that
\begin{eqnarray}
\label{eq:div}
g\mid a_4a_1 \quad \text{and} \quad g\mid a_6 \quad (\text{so}\ g^{3}\mid a_{14}).
\end{eqnarray}
In view of the degrees of the polynomials involved, these divisibility properties are quite restrictive, especially the left-most one.  We will make use of this to prove the following important simplification. 

\begin{lemma}
\label{lem:div}
In the above setting, we have $g\mid a_4$ $($in addition to $g^2\mid a_9$, $g^3\mid a_{14})$.
\end{lemma}

\begin{proof}[Proof of Lemma~\ref{lem:div}]
Assuming the contrary, there is a linear form $\ell$ dividing $g$ with multiplicity $m$ such that $\ell^m\nmid a_4$.  Then \eqref{eq:div} implies that $\ell^{2m+1}\mid a_9$.  By the universality of the normal form \eqref{eq:unique} (\textit{cf.} Remark~\ref{rem:choice}\eqref{r:c-2}), we may as well assume that $\ell=t$.  Thus the Weierstrass form of the relative Jacobian reads
\begin{eqnarray}
\label{eq:WF-proof}
Y^2 = X^3 + (a_9^2t+a_4^4t^2)\,X
+ a_{14}^2t. 
\end{eqnarray}
We first assume that $a_9\not\equiv 0$. Then \eqref{eq:div} predicts the precise vanishing orders of $a_4$ and $a_9$ for degree reasons and by assumption:
\[
a_9 = t^{2m+1}b_9, \quad a_4 = t^{m-1}b_4, \quad \text{where}\ t\nmid b_4b_9.
\]
(Here, for ease of notation, the indices of the $b_i$ cease to indicate the degree, but rather refer back to the coefficients $a_i$ which the $b_i$ originate from.)  Then the Weierstrass form \eqref{eq:WF-proof} can be minimalized $m-1$ times by setting $X = t^{2m-2} X'$, $Y=t^{3m-3}Y'$, but the resulting Weierstrass form
\begin{eqnarray}
\label{eq:this}
Y'^2 = X'^3 + (t^7b_9^2 + t^2 b_4^4) X' + t^7 b_{14}^2
\end{eqnarray}
is minimal since the special fibre has type I$_n^*$ for some $n>0$ by \cite[Table 4.3]{CDL}.  In fact, since \eqref{eq:this} has discriminant still divisible by $t^{12}$ by construction, we infer $n\geq 8$, but then the contribution of this fibre to the Euler--Poincar\'e characteristic already prevents $\Jac(S)$ from being rational.

For $a_9\equiv 0$, the argument is completely analogous, as we have to minimalize at most $m-1$ times to arrive at the same kind of fibre type, so we skip the details.
\end{proof}

To complete the proof of Proposition~\ref{prop:rat-div}, we show that, with the divisibilities of Lemma~\ref{lem:div} in effect, the relative Jacobian $\Jac(S)$ is indeed verified to be rational generally.  To this end, we shall minimalize \eqref{form2} at each zero of $g$ counted with multiplicity.  Indeed, in terms of the factorizations
\[
a_9 = g^2 a_1, \quad a_4 = \sqrt{a_0} g, \quad a_{14} = g^3a_2
\]
implied by Lemma~\ref{lem:div}, the minimal model is given by
\begin{eqnarray}
\label{eq:Jac-minimal}
\Jac(S): \;\; 
Y^2 = X^3 + (a_1^2t+a_0^2t^2)\,X
+ a_{2}^2t.
\end{eqnarray}
Generally, this defines a rational surface by \cite[Theorem~1.1]{I} and has discriminant
\[
  \Delta (t) = (a_1^2t + a_0^2t^2)a_1^4 + a_2^4
\]
of degree $8$, as required.
\end{proof}

For later use, we record the possible configurations of reducible fibres of $\Jac(S)$.

\begin{lemma}
\label{lem:config}
If $\Delta\neq 0$, then the reducible fibres of\, $\Jac(S)$ are determined as follows:
$$
\renewcommand{\arraystretch}{1.2}
\begin{array}{c|c|c|l}
\# \text{ Roots of } \Delta & \text{Multiplicities} & \text{Fibre types} & \text{~~~~~~Conditions} \\
\hline 
8 & 1^8 & 8 \times {\III} & a_1\not\equiv 0, a_1\nmid a_2\\
\hline
5 & 1^4, 4 & 4 \times \III + \I_0^* & a_1\not\equiv 0, a_1\mid a_2, 
a_1^4\mid \Delta \\
3 & 1^2, 6 & 2 \times \III + \I_2^* 
& a_1\not\equiv 0, a_1\mid a_2, 
a_1^6\mid \Delta \\
2 & 1, 7 & \III + \III^* &  a_1\not\equiv 0, a_1\mid a_2, 
a_1^7\mid \Delta \\
\hline
2 & 4^2 & 2 \times \I_0^* & a_1\equiv 0, a_2\not\in k[t]^2\\
1 & 8 & \I_4^* & a_1\equiv 0, a_2\in k[t]^2, a_0\neq 0\\
1 & 8 & \II^* & a_1\equiv 0, a_2\in k[t]^2, a_0= 0
\end{array}
$$
\end{lemma}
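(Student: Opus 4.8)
The plan is to determine the fibre type at every place of $\PP^1$ from the orders of vanishing of the coefficients $A=a_1^2t+a_0^2t^2$, $B=a_2^2t$ and the discriminant $\Delta=a_1^6t+a_0^2a_1^4t^2+a_2^4$ of the minimal Weierstrass form \eqref{eq:Jac-minimal}, exploiting that $\Jac(S)$ is a rational quasi-elliptic surface by Proposition~\ref{prop:rat-div}. All its fibres are additive, and in characteristic $2$ the only reducible types a quasi-elliptic fibration can carry are $\III$, $\I_n^*$, $\III^*$ and $\II^*$ (Dynkin types $\tilde A_1,\tilde D_{n+4},\tilde E_7,\tilde E_8$) by \cite{CDL}; by Tate's algorithm \cite[Table~4.3]{CDL} the local contribution $\mathrm{ord}_v(\Delta)$ of such a fibre equals $1$, $4+n$, $7$ and $8$, respectively. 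Since $\deg\Delta=8$, the reducible configuration is a partition of $8$ into these contributions, and the task reduces to pinning down, place by place, both $\mathrm{ord}_v(\Delta)$ and the exact type.

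The key simplification is a characteristic-$2$ phenomenon: differentiating $\Delta$ in $t$ and using that every square has vanishing derivative (the summands $a_0^2a_1^4t^2=(a_0a_1^2t)^2$ and $a_2^4=(a_2^2)^2$ are squares, and $(a_1^6)'=0$), only the term from $a_1^6t$ survives, so $\Delta'=a_1^6$. Hence $\gcd(\Delta,\Delta')\mid a_1^6$, and every multiple root of $\Delta$ must be a root of $a_1$. This organizes the proof into the case $a_1\not\equiv 0$, where $a_1$ is a single linear form with one root $v_0\in\PP^1$ that is the only possible site of a non-$\III$ fibre, and the case $a_1\equiv 0$, treated separately. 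By the universality of the normal form (Remark~\ref{rem:choice}\eqref{r:c-2}) I may place $v_0$ at $t=0$ for the local computations.

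For $a_1\not\equiv 0$ I would first dispose of $a_1\nmid a_2$: then $\Delta(v_0)=a_2(v_0)^4\neq 0$, so $v_0$ is not even a zero of $\Delta$, whence $\Delta$ is separable and all $8$ zeroes are simple, each carrying a type $\III$ fibre (Row~1). If instead $a_1\mid a_2$, I write $a_2=a_1b$ (with $\deg b\le 1$) and factor $\Delta=a_1^4\bigl(a_1^2t+a_0^2t^2+b^4\bigr)$; a short order computation at $v_0$ shows $\mathrm{ord}_{v_0}(\Delta)\in\{4,6,7\}$ according to whether $a_1^2\nmid a_2$, or $a_1^2\mid a_2$ with $a_0\neq 0$, or $a_1^2\mid a_2$ with $a_0=0$, while the remaining zeroes of $\Delta$ stay simple (they avoid $v_0$) and give $\III$ fibres, accounting for the $4$, $2$, $1$ copies in Rows~2--4. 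The case $a_1\equiv 0$ gives $\Delta=a_2^4$: if $a_2\notin k[t]^2$ it has two distinct roots of multiplicity $4$, forcing $2\times\I_0^*$ (Row~5), while if $a_2\in k[t]^2$ there is a single root of multiplicity $8$ (Rows~6--7).

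What remains, and what I expect to be the real work, is to fix the type at the distinguished place $v_0$ (resp.\ at the single root when $a_1\equiv 0$) by running Tate's algorithm on \eqref{eq:Jac-minimal}, since the discriminant order alone leaves genuine ambiguities. The orders $\mathrm{ord}_{v_0}(\Delta)=4$ and $6$ force $\I_0^*$ and $\I_2^*$ uniquely (no other admissible type has these contributions, as $\tilde E_6=\IV^*$ does not occur in the quasi-elliptic characteristic-$2$ list), but $\mathrm{ord}=7$ could a priori be $\I_3^*$ or $\III^*$ and $\mathrm{ord}=8$ could be $\I_4^*$ or $\II^*$. Resolving these $\tilde D$-versus-$\tilde E$ alternatives is the crux: using the orders of $A$ and $B$ at $v_0$ I would show the $\mathrm{ord}=7$ case always lands on $\III^*$ (Row~4), and that in the case $a_1\equiv 0$ the presence or absence of the term $a_0^2t^2$ in $A$ separates $\I_4^*$ ($a_0\neq 0$) from $\II^*$ ($a_0=0$) (Rows~6--7). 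Finally I would verify, via the Shioda--Tate count on the rational surface $\Jac(S)$, that the total number of components is consistent in each row, confirming that no configuration has been omitted.
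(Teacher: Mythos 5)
Your overall strategy is sound and genuinely different from the paper's: the paper disposes of this lemma in a single line, citing Ito's classification of rational quasi-elliptic surfaces \cite[Proposition~4.2]{I}, which applies because Proposition~\ref{prop:rat-div} has already put $\Jac(S)$ into Ito's normal form $y^2 = x^3 + tb_2x + tc_2^2$ with $b_2 = a_1^2 + ta_0^2$, $c_2 = a_2$. The parts of your argument that are carried out are correct: $\Delta' = a_1^6$ (the same characteristic-$2$ derivative trick the paper itself uses in the proof of Proposition~\ref{prop:rat-div}), hence every multiple root of $\Delta$ is a root of $a_1$; the factorization $\Delta = a_1^4\bigl(a_1^2t + a_0^2t^2 + b^4\bigr)$ when $a_2 = a_1b$; and the resulting order counts $4$, $6$, $7$ at the root of $a_1$, resp.\ $4^2$ or $8$ when $a_1\equiv 0$. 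Combined with the partition-of-$8$ argument, these force rows 1, 2, 3 and 5 of the table outright, since $\III$, $\I_0^*$ and $\I_2^*$ are the unique admissible types with local contribution $1$, $4$ and $6$.

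However, the proposal is incomplete exactly where you flag "the real work," and this is a genuine gap rather than a routine verification: the identifications $\mathrm{ord}=7 \Rightarrow \III^*$ (rather than $\I_3^*$) and $\mathrm{ord}=8 \Rightarrow \I_4^*$ or $\II^*$ according to $a_0\neq 0$ or $a_0=0$ cannot be read off from $\mathrm{ord}_v(\Delta)$ at all; they require a finer local analysis (the characteristic-$2$ quasi-elliptic analogue of Tate's algorithm applied to the orders and shapes of $A=a_1^2t+a_0^2t^2$ and $B=a_2^2t$, or an explicit resolution of the Weierstrass singularity), which you only promise but do not execute. That disambiguation is precisely the content the paper's citation of Ito supplies for free, so as written your proof establishes the multiplicity column of the table but not the fibre-type column in rows 4, 6 and 7. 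Note also that one cannot shortcut this step by appealing to Proposition~\ref{prop:ade}, whose table matches yours: its proof quotes Lemma~\ref{lem:config}, so that route would be circular within the paper's logical structure.
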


\begin{proof}
This follows  from Ito; \textit{cf.} \cite[Proposition~4.2]{I}.
\end{proof}

\section{Singularity analysis I}
\label{s:ade}

We turn to the model of $S$ which we have derived in \eqref{eq:model}.  We start by analysing the singularities outside the fibres above the zeroes of $g$.  For simplicity, we first consider the fibre at $t=0$.  The notation of ADE singularities follows \cite{Artin}.

\begin{proposition}
\label{prop:ade}
Let $g(0)\neq 0$.  Then \eqref{eq:model} has at worst ADE singularities in the fibre above $t=0$, and the surface $S$ has a simple fibre of the following type at $t=0$:
\begin{table}[ht!]
  $$
  \renewcommand{\arraystretch}{1.1}
\begin{array}{c|c|c}
\text{Conditions} & \text{ADE configuration} & \text{Fibre type}\\
\hline
t\nmid a_1 \; \text{ or } \; t\nmid a_2 & - & \II \; \text{ or } \; \III\\
t\mid a_1,\; t\mid a_2, \; t^2\nmid a_2 & 4 \times A_1 & \I_0^*\\
a_0\neq 0, \; t\mid a_1\neq 0,\; t^2\mid a_2 & 2 \times A_3 & \I_2^*\\
a_0\neq 0,\; a_1 = 0,\; t^2\mid a_2 \neq 0 & 2 \times D_4^0 & \I_4^*\\
a_0=0,\; t\mid a_1\neq 0,\; t^2\mid a_2 & A_7 & \III^*\\
a_0 = a_1 = 0, \; t^2\mid a_2\neq 0 & D_8^0 & \II^*
\end{array} 
$$
\caption{ADE configurations and fibre types at $t=0$}
\label{tab:ade}
\end{table}
\end{proposition}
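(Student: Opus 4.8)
The plan is to extract the singularities directly from the affine model \eqref{eq:model}, using the characteristic-$2$ feature that the $x$- and $y$-derivatives of the defining polynomial depend only on $t$. Writing $F$ for the difference of the two sides of \eqref{eq:model}, one computes
\[
F_x=a_2g^3,\qquad F_y=a_1g^2,
\]
because $\partial_x(x^4)=\partial_x(x^2)=0$ and $\partial_y(y^2)=0$. Since $g(0)\neq0$, a point of the fibre over $t=0$ can be singular only when $a_1(0)=a_2(0)=0$, i.e. $t\mid a_1$ and $t\mid a_2$. This settles the first row immediately: if $t\nmid a_1$ or $t\nmid a_2$ the surface is smooth along the fibre, and the reduced fibre is the closure of the affine curve $y^2+a_1(0)g(0)^2y=a_2(0)g(0)^3x$; as a quasi-elliptic fibration is everywhere additive, this is an irreducible cuspidal curve of type $\II$ when $t\nmid a_2$, and two components of type $\III$ when $t\mid a_2$ but $t\nmid a_1$.

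Assuming $t\mid a_1$ and $t\mid a_2$, the equation restricted to the fibre becomes $y^2=0$, so every singular point lies on $L=\{y=0\}$. Substituting $y=0$ into $F_t$ and using that squares have vanishing $t$-derivative in characteristic $2$, I would check that the singular points are exactly the zeroes of
\[
x\bigl(x^3+a_0\,g(0)^2x+\tilde a_2(0)\,g(0)^3\bigr),\qquad a_2=t\tilde a_2 .
\]
The factorization of this quartic produces the three remaining rows. When $t^2\nmid a_2$ its cubic factor has non-zero constant term, so in characteristic $2$ (where $x^3+px+q$ has a repeated root only if $q=0$) the quartic has four distinct roots, giving four singular points; when $t^2\mid a_2$ and $a_0\neq0$ the quartic is $x^2(x+c)^2$ with $c\neq0$, giving two points; and when in addition $a_0=0$ it is $x^4$, leaving the single point $x=0$.

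The remaining and most delicate step is to identify the analytic type at each point and the Kodaira type of the fibre. For this I would expand $F$ at each singular point and match it against Artin's characteristic-$2$ normal forms \cite{Artin}. In the first regime the quadratic part is $y^2+\lambda ty+\mu tx$ with $\mu\neq0$, which the coordinate change $\eta=\mu x+\lambda y+\cdots$ converts to $y^2+t\eta$, so each of the four points is an ordinary node $A_1$; resolving them makes $L$ (which enters the fibre with multiplicity $2$) the central curve of a $\tilde D_4=\I_0^*$ configuration with four multiplicity-one ends. In the other regimes the presence of the linear term $a_1g^2y$ decides the outcome: for $t\mid a_1\neq0$ one gets the milder types $A_3$ (two points, $\I_2^*$) and $A_7$ (one point, $\III^*$), while for $a_1\equiv0$ one gets the even characteristic-$2$ double points $D_4^0$ (two points, $\I_4^*$) and $D_8^0$ (one point, $\II^*$). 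Since the exceptional curves of the minimal resolution always meet $f^{-1}(0)$ with multiplicity one, the fibre is simple; and the Kodaira type can be confirmed independently from the Weierstrass form \eqref{eq:Jac-minimal} of $\Jac(S)$ by Tate's algorithm (equivalently from Lemma~\ref{lem:config}), since $S$ and its relative Jacobian share the same fibre configuration.

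I expect the main obstacle to be precisely this last step in characteristic $2$: the Milnor-number and tangent-cone criteria available in characteristic zero do not apply, so distinguishing $A_3$ from $A_7$ and, above all, confirming that the $D$-type points are the even form $D_n^0$ rather than $D_n^1$ forces one to produce explicit coordinate changes into Artin's normal forms, tracking how $a_1g^2y$, $a_0tg^2x^2$ and $tx^4$ interact and how the strict transform of $L$ attaches to the exceptional divisor. One must also check that the chart at infinity $X=1/x,\,Y=y/x^2$ contributes no further singularities and that the degree of \eqref{eq:model} cannot be reduced, so that no extra component or multiplicity slips in.
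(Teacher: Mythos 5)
Your first half is correct and coincides with the paper's own argument: the characteristic-$2$ Jacobi-criterion computation ($F_x=a_2g^3$, $F_y=a_1g^2$, and $F_t$ restricted to $\{t=y=0\}$ giving the quartic $x\bigl(x^3+a_0g(0)^2x+\tilde a_2(0)g(0)^3\bigr)$) reproduces exactly the paper's auxiliary polynomial $r$; your root count in the three regimes (four simple roots, two double roots, one quadruple root) matches the paper's case division, and your $\II$/$\III$ refinement in the smooth row is a correct sharpening of the first table entry.

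The genuine gap is in the second half, at precisely the point you yourself flag as ``the main obstacle'': both the identification of the ADE types and, above all, the simplicity of the fibre are asserted rather than proved, and the two shortcuts you offer for simplicity fail. First, the principle that ``the exceptional curves of the minimal resolution always meet $f^{-1}(0)$ with multiplicity one'' is false: in the $\III^*$ and $\II^*$ configurations the exceptional curves over the $A_7$ resp.\ $D_8^0$ point carry multiplicities up to $4$ resp.\ $6$, and since the strict transform $\tilde\Theta$ of $\Theta=\{t=y=0\}$ (which has multiplicity exactly $2$) meets the exceptional locus only above the single singular point, intersection theory alone cannot distinguish a simple $\III^*$ fibre (where $\tilde\Theta$ has Kodaira multiplicity $2$) from a multiple fibre $2\times\III^*$ (where $\tilde\Theta$ would be a multiplicity-$1$ end of the $\tilde E_7$ diagram); the same ambiguity occurs for $\II^*$. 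Second, passing to $\Jac(S)$ via Lemma~\ref{lem:config} or Tate's algorithm determines the Kodaira type but is blind to multiplicity: a torsor fibre of the same type can be multiple, and in this very family multiple fibres of types $\I_0^*,\I_2^*,\I_4^*,\III^*,\II^*$ do occur, namely at the roots of $g$ (Proposition~\ref{prop:sing}); that they do \emph{not} occur when $g(0)\neq 0$ is the actual content of the proposition, so it cannot be outsourced to the Jacobian. The paper settles this by carrying out the blow-ups explicitly (along $\{t=y=0\}$ via $y=ty'$ giving \eqref{eq:eqq}, then along $\{t=x=0\}$ via $t=xt'$ giving \eqref{eq:eqqq}) and exhibiting concrete multiplicity-one components, e.g.\ $\{x=y'=0\}$ and $\{x=y'+c_0g^2=0\}$ when $a_1=tc_0\neq 0$, and an extra $A_1$ in the chart $x=tx'$ when $a_1\equiv 0$; the same computations yield the Artin normal forms $A_1$, $A_3$, $D_4^0$, $A_7$, $D_8^0$ from the lowest-order terms. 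Without this (or an equivalent) computation, the two assertions of the proposition --- the ADE list and the word ``simple'' --- remain unproven.
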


\begin{proof}
Starting from the affine chart \eqref{eq:model}, we first check the point at infinity -- the cusp if the fibre is irreducible.  Here the partial derivative with respect to $t$ always returns $1$, so this point is never a surface singularity.

It thus suffices to analyse the affine chart \eqref{eq:model}.  By the Jacobi criterion, the fibre at $t=0$ contains a singularity if and only if $t\mid a_1, a_2$; this verifies the first entry of Table~\ref{tab:ade}.

Assume that $t\mid a_1, a_2$, so we can write $a_1=tc_0$, $a_2=tb_1$.  In this case, Lemma~\ref{lem:config} predicts that the fibre of $S$ is non-reduced, and we will check how the double fibre component $\Theta=\{t=y=0\}$ fits into this fibre.

Recall that $g(0)\neq 0$ by assumption. For ease of notation, we shall assume that $g(0)=1$ by absorbing the non-zero factor $g(0)$ into the $a_i$.  Note that this does not affect the divisibility conditions in Table~\ref{tab:ade}.

The fibre arises by resolving the singularities at the points $(x,y,t)=(\alpha,0,0)$, where $\alpha$ runs through the roots of the auxiliary polynomial
$$
r=x^4+a_0x^2+b_1(0)x,
$$
again by the Jacobi criterion.  We now analyse the resolution of the singularities.  To this end, we blow up along~$\Theta$; in the affine chart $y=ty'$, we obtain the strict transform
\begin{eqnarray}
\label{eq:eqq}
ty'^2 + c_0tg^2y' = x^4 + a_0g^2x^2 + b_1g^3x+t^2a_3^4.
\end{eqnarray}
The exceptional curves of this blow-up, serving as fibre components at $t=0$, are encoded in the zeroes of the auxiliary polynomial $r$.  In particular, if $t\nmid b_1$, then there are four disjoint components, forming a simple $\I_0^*$ configuration together with the strict transform $\tilde\Theta$ of $\Theta$.  Since \eqref{eq:eqq} is smooth at $t=0$, as we see by inspection of the partial derivative with respect to $x$, and the same holds true for the other affine chart of the blow-up, we confirm the second entry of Table~\ref{tab:ade}.  We note that \eqref{eq:model} describes an $A_1$ singularity at each of the four points.

Assume that $b_1=tb_0$ and $a_0\neq 0$ so that $r$ has two double zeroes; in \eqref{eq:eqq} these result in disjoint double fibre components $\Theta'$, $\Theta''$ at $t=0$.  In case $c_0\neq 0$, each component contains two $A_1$ singularities at $y'=0$ and at $y'=c_0$.  Altogether, this results in a simple $\I_2^*$ configuration.  Indeed, \eqref{eq:model} is easily converted to the normal form of an $A_3$ singularity at the two singular points corresponding to the zeroes of $r$. This confirms the third entry of Table~\ref{tab:ade}.

On the other hand, if $b_1=tb_0$ and $a_0\neq 0$ but $c_0=0$, then each $\Theta'$ and $\Theta''$ contains a single singular point given by $y'=0$.  One verifies that the singularity on each, $\Theta'$ and $\Theta''$, has type $A_3$, and the resulting simple configuration corresponds to Kodaira type $\I_4^*$.  In fact, \eqref{eq:model} readily displays a singularity of type $D_4^0$ at $(t,x,y) = (0,0,0)$ in the normal form from \cite{Artin}, and similarly at the other double root of $r$.  This proves the fourth entry of Table~\ref{tab:ade}.

It remains to cover the case $b_1=tb_0$ and $a_0=0$.  Then \eqref{eq:eqq} reveals the 4-fold fibre component $\Theta'=\{t=x=0\}$.  We continue to blow up along $\Theta'$.  In the affine chart $t=xt'$, the strict transform reads
\begin{eqnarray}
\label{eq:eqqq}
t'y'^2 + c_0t'g^2y = x^3+t'b_0g^3x+t'^2a_3^4x.
\end{eqnarray}
At $x=0$ (which describes part of the fibre at $t=xt'=0$), we obtain two simple fibre components given by $y'=0$ and $y'=c_0$ if $c_0\neq 0$.  Thus the fibre itself is simple, as claimed, and two analogous further blow-ups add another two components each, of multiplicity $2$, resp.\ $3$, to make for the fibre of Kodaira type III$^*$.  Accordingly, in \eqref{eq:model}, the lowest-order terms $y^2, ty, tx^4$ encode a singularity of type $A_7$, as claimed.

Meanwhile, if $c_0=0$, then \eqref{eq:eqqq} returns another double fibre component given by $\Theta''=\{x=y'=0\}$.  It contains an $A_5$ singularity at $(0,0,0)$, but more importantly, there is also an $A_1$ singularity in the other chart $x=tx'$.  Its resolution results in a simple fibre component, confirming the claim of the lemma.  The overall configuration of exceptional curves gives a fibre of Kodaira type II$^*$.  In \eqref{eq:model}, the lowest-order terms $y^2, tx^4, t^2x, t^3$ confirm a singularity of type $D_8^0$ in the notation of \cite{Artin}.  This completes the proof of Proposition~\ref{prop:ade}.
\end{proof}

With a view towards our goal of determining when $S$ is an Enriques surface (so that it has exactly one or two multiple fibres), we record the following useful consequence. 

\begin{corollary}
\label{lem:ade}
\label{lem:simple}
Let $t_0\in\PP^1$ be such that $g(t_0)\neq 0$.  Then \eqref{eq:model} has at worst ADE singularities in the fibre above $t_0$, and the surface $S$ has a simple fibre at $t_0$.
\end{corollary}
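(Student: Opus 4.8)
The plan is to reduce the statement, for a fibre above an arbitrary $t_0$, to the already-established case $t_0=0$ of Proposition~\ref{prop:ade}, exploiting that the argument there was purely \emph{local} at the origin. Inspecting that proof, the only global input is the hypothesis $g(0)\neq 0$ (used to normalize $g(0)=1$ and absorb it into the $a_i$), while every singularity computation — the Jacobi criterion at the point at infinity, and the successive blow-ups along $\Theta$, $\Theta'$, $\Theta''$ — depends only on the vanishing orders at $t=0$ of $a_1,a_2$ together with the value of $a_0$ there. My first observation is therefore that these computations carry over verbatim with $t$ replaced by a local uniformizer at any $t_0$ at which $g(t_0)\neq 0$.

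To make this precise for a \emph{finite} $t_0$, I would invoke the universality of the normal form recorded in Remark~\ref{rem:choice}\eqref{r:c-2}: the affine translation $t\mapsto t-t_0$ of the base $\PP^1$ carries $S$ to a surface again of the form \eqref{eq:unique}, and since $S$ is a quasi-elliptic Enriques surface its relative Jacobian is rational, so Proposition~\ref{prop:rat-div} once more furnishes a model \eqref{eq:model}, now with the fibre of interest sitting over the new origin. Here I would stress that the zero locus of $g$ is intrinsic: it marks the places where the Weierstrass form of $\Jac(S)$ is forced to be non-minimal, equivalently (by Lemma~\ref{lem:config}) the reducible fibres of $\Jac(S)$. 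A translation fixes $\infty$ and merely relabels the finite points, so the recentred factor $\tilde g$ satisfies $\tilde g(0)=0$ if and only if $g(t_0)=0$. Thus $g(t_0)\neq 0$ becomes exactly the non-vanishing of $\tilde g$ at the origin, and Proposition~\ref{prop:ade} applies directly, yielding at worst ADE singularities and a simple fibre at $t_0$.

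The remaining case $t_0=\infty$ I would handle by the $s\leftrightarrow t$ symmetry of \eqref{eq:unique} noted in Remark~\ref{rem:choice}\eqref{r:c-1}, which is inherited by \eqref{eq:model} since the distinguished coefficients $st$ and $(st)^3$ are symmetric and the divisibilities $a_9=g^2a_1$, $a_{14}=g^3a_2$, $a_4=\sqrt{a_0}\,g$ of Lemma~\ref{lem:div} are simply replaced by their reversed counterparts. Interchanging $s$ and $t$ swaps the fibres over $0$ and $\infty$ while preserving the overall shape of the equation and sending $g(s,t)$ to $g(t,s)$; under this swap the hypothesis $g(\infty)\neq 0$ turns into non-vanishing of the reversed $g$ at the origin, reducing us once more to Proposition~\ref{prop:ade}.

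I expect the main subtlety to lie not in any new computation but in justifying these two reductions cleanly: namely that recentring genuinely returns a model of the form \eqref{eq:model} (so that Proposition~\ref{prop:ade} is applicable), and that the vanishing locus of $g$, together with its degree and homogeneity constraints, behaves well under the base transformations — in particular that the swap $s\leftrightarrow t$ respects the divisibility structure of Lemma~\ref{lem:div}. Once the intrinsic meaning of the condition $g(t_0)\neq 0$ is pinned down in this way, the corollary is immediate from Proposition~\ref{prop:ade}.
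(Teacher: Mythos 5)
Your overall strategy---move $t_0$ to the origin by a M\"obius transformation, restore the normal form, and quote Proposition~\ref{prop:ade}---is exactly the paper's route (the paper uses a single M\"obius transformation for every $t_0$, including $\infty$, rather than splitting into a translation and the $s\leftrightarrow t$ swap, but that difference is cosmetic). The gap sits precisely in the step you yourself flag as the main subtlety: guaranteeing that the recentred model's polynomial $\tilde g$ does not vanish at the new origin. Your justification is that the zero locus of $g$ is intrinsic, being ``the places where the Weierstrass form of $\Jac(S)$ is forced to be non-minimal, equivalently (by Lemma~\ref{lem:config}) the reducible fibres of $\Jac(S)$''. Both halves of this are false. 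Lemma~\ref{lem:config} describes the reducible fibres of the \emph{minimalized} Jacobian \eqref{eq:Jac-minimal} as the roots of its degree-$8$ discriminant $\Delta(t)=(a_1^2t+a_0^2t^2)a_1^4+a_2^4$, which is a different polynomial from $g$: in the generic case the Jacobian has eight distinct fibres of type $\III$, and eight distinct points cannot be the zero set of the degree-$4$ polynomial $g$; the minimal roots of $g$ instead carry the multiple fibres of $S$, over which the Jacobian fibre is in general an irreducible cuspidal curve. Moreover, the zero locus of $g$ is not intrinsic: besides its minimal roots, $g$ may have non-minimal roots in the sense of Section~\ref{s:res}, and these are artifacts of the chosen model---substituting $x\mapsto x/(t-\beta)$, $y\mapsto y/(t-\beta)^2$ into \eqref{eq:model} and clearing denominators plants such a root at any prescribed point $\beta$, replacing $g$ by $(t-\beta)g$. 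Consequently, re-running Proposition~\ref{prop:rat-div} on the recentred equation produces \emph{some} admissible polynomial $\tilde g$, but nothing in your argument prevents $\tilde g$ from acquiring a spurious root at the new origin, which is exactly what must be excluded before Proposition~\ref{prop:ade} can be applied.

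The repair is to avoid the abstract detour through $\Jac(S)$ and instead track $g$ through the coordinate change, which is what the paper does: after the M\"obius transformation one restores the shape \eqref{eq:model} by an explicit substitution
\[
(x,y)\longmapsto (ux+b_4,\; vy+\alpha x^2+\beta g x + b_9),
\]
where the $\alpha x^2$ term recreates the odd-part normalization of the $x^4$-coefficient and the $\beta g x$ term repairs the $x^2$-coefficient without disturbing divisibility of the $x$-coefficient by $g^3$. Every term such a substitution produces is divisible by the required power of the \emph{transported} $g$, so the restored equation is again of the form \eqref{eq:model} with $\tilde g$ equal to the M\"obius transform of the original $g$; in particular $\tilde g(0)=g(t_0)\neq 0$, and Proposition~\ref{prop:ade} applies. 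Your reduction is correct in outline, but it needs this explicit bookkeeping (or some other argument that the restoration can be performed preserving the divisibilities by the transported $g$) in place of the intrinsicness claim, which as stated is not true.
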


\begin{proof}
In order to reduce the corollary to Proposition~\ref{prop:ade}, we first apply a M\"obius transformation to move the special fibre to $t=0$.  Then a suitable variable transformation
\[
(x,y) \longmapsto (ux+b_4, vy+\alpha x^2 + \beta g x + b_9), \quad u,v,\alpha, \beta\in k, \;\; b_4, b_9\in k[t]
\]
ensures that the special shape of \eqref{eq:model} with the given divisibilities by $g$ is restored.  This can be viewed as a special instance of the universality of \eqref{eq:unique}, as explained in Remark~\ref{rem:choice}\eqref{r:c-1} and extended to \eqref{eq:model}.  The claim of Corollary~\ref{lem:ade} follows directly.
\end{proof}

\section{Singularity analysis II}
\label{s:res}

Let $\alpha$ be a root of $g$.  In analogy with the case of (quasi-)elliptic surfaces with section, we call $\alpha$ \emph{non-minimal} (for the model \eqref{eq:model}) if $a_3(\alpha)=0$.  Indeed, at a non-minimal root, we can apply a change of variables
\[
x= (t-\alpha)x', \quad y=(t-\alpha)^2y',
\]
reducing the degree of \eqref{eq:model} by $4$ to $14$ (and those of all coefficients accordingly) while embedding the surface in $\PP[1,1,3,7]$.  In analogy, the degree of the discriminant drops by $12$ -- exactly as in the proof of Lemma~\ref{lem:div} (or in the last step of Tate's algorithm; \textit{cf.} \cite{Tat}).

\begin{proposition}
\label{prop:sing}
At the roots of $g$, either \eqref{eq:model} is non-minimal, or $S$ has a double fibre.
\end{proposition}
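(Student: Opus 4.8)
The plan is to fix a root $\alpha$ of $g$ and, assuming it is \emph{minimal} (i.e.\ $a_3(\alpha)\neq 0$, the opposite being non-minimality by definition), to show that $S$ has a double fibre over $\alpha$ by resolving \eqref{eq:model} explicitly, in the spirit of the proof of Proposition~\ref{prop:ade}. First I would invoke the universality of the normal form (Remark~\ref{rem:choice}\eqref{r:c-2}, as already used in Corollary~\ref{lem:simple}) to move $\alpha$ to $t=0$, so that $g=t^m g_0$ with $m\geq 1$ and $g_0(0)\neq 0$. Because $g^2$ and $g^3$ are then divisible by $t^2$ and $t^3$, every term on the right-hand side of \eqref{eq:model} vanishes at $t=0$, leaving $y^2=0$; thus the reduced fibre is the line $\Theta=\{t=y=0\}$, and at its generic point the equation yields $t=y^2/x^4+\cdots$, so $\Theta$ occurs in $f^*(0)$ with the even multiplicity $2$. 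By the Jacobi criterion the singularities of \eqref{eq:model} on this fibre reduce to the single point $P=(0,0,0)$ --- the cusp at infinity being smooth, exactly as in the first step of the proof of Proposition~\ref{prop:ade} --- so everything hinges on the resolution over $P$.

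I would resolve $P$ by the same iterated blow-ups as in Proposition~\ref{prop:ade}, starting with the blow-up along $\Theta$. The decisive effect of $g(0)=0$ is that the auxiliary polynomial controlling the new fibre components degenerates: in the chart $y=ty'$ the strict transform is
\[
ty'^2 + a_1 t^2 g_0^2 y' = x^4 + a_0 t^2 g_0^2 x^2 + a_2 t^2 g_0^3 x + t^2 a_3^4,
\]
whose restriction to $t=0$ is $x^4=0$ rather than a product of distinct linear factors. Hence the first exceptional curve is the single component $\{t=x=0\}$, along which $t=x^4/y'^2+\cdots$ shows that $t$ vanishes to order $4$ --- again even. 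The surface now has a new singularity at the origin of this chart whose tangent cone is the double plane $a_3(0)^4t^2=0$, since $a_3(0)\neq 0$, and I would iterate, recording $\mathrm{ord}_{E_i}(t)$ for each exceptional curve $E_i$. The goal is to verify that all these orders are even; as the multiplicity of $\tilde\Theta$ equals $2$, the greatest common divisor of the multiplicities in $f^*(0)$ is then exactly $2$, which is to say $S$ has a double fibre over $\alpha$.

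The main obstacle is precisely establishing this evenness at every blow-up, uniformly in the multiplicity $m$ and in the vanishing patterns of $a_0,a_1,a_2$ that fix the Kodaira type of the fibre. The structural reason I expect to drive the argument is the persistence of the term $t^2a_3^4$ with $a_3(0)\neq 0$: it keeps the tangent cone of each successive singularity a perfect square in $t$ and thereby blocks the creation of any reduced (odd-multiplicity) fibre component. This obstruction can be removed exactly when $a_3(\alpha)=0$, where the substitution $x=t x'$, $y=t^2 y'$ completes the square and lowers the degree of \eqref{eq:model} --- that is, exactly the non-minimal case, in which a simple fibre may appear. Together with the (standard) termination of the resolution in a genuine Kodaira fibre, this dichotomy proves the proposition.
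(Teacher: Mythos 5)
Your overall strategy (explicit resolution in the spirit of Proposition~\ref{prop:ade}, tracking the multiplicity of $t$ along each exceptional curve) is the same as the paper's, and your first steps are correct: the fibre reduces to $\Theta=\{t=y=0\}$ of multiplicity $2$, the only surface singularity on it is $(0,0,0)$, and the first blow-up produces the $4$-fold component $\{t=x=0\}$ (though your displayed chart is only the case $m=1$; in general the exponents are $t^{2m}$ and $t^{3m-1}$). The genuine gap is that the entire content of the proposition --- evenness of \emph{every} multiplicity through the \emph{whole} resolution, uniformly in $m$ and in the vanishing patterns of $a_0,a_1,a_2$ --- is deferred to the heuristic that the persistence of $t^2a_3^4$ keeps each successive tangent cone a perfect square and thereby blocks reduced components. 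That is not a valid mechanism. Within the paper itself, the strict transform \eqref{eq:eqq} in the proof of Proposition~\ref{prop:ade} contains exactly the same term $t^2a_3^4$ with $a_3(0)\neq 0$ allowed, yet when $t^2\nmid a_2$ it produces four \emph{reduced} exceptional components and a simple fibre of type $\I_0^*$; likewise the $D_8^0$ point (last entry of Table~\ref{tab:ade}) has perfect-square tangent cone $\{y^2=0\}$ and resolves to the \emph{simple} fibre $\II^*$, full of odd-multiplicity components. So what blocks reduced components is not that term but the divisibility of all the other coefficients by powers of $t$ imposed by $g$, and turning this into a proof is precisely the hard part. The paper does it not by tangent-cone bookkeeping but by a structural reduction: after the third blow-up one reaches a double cuspidal cubic in \eqref{eq3}, a completion-of-the-square substitution brings the local equation into the Weierstrass-like form \eqref{eq4}, and this is minimalized $m-1$ times via \eqref{eq:tilde} --- a step you cannot avoid, since $m=2$ is exactly the supersingular case --- after which the residual singularity is a rational double point and the fibre in \eqref{eq5} is manifestly twice a Kodaira fibre.

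Second, your witness for ``gcd exactly $2$'' is the wrong curve. The Enriques surface $S$ is the \emph{relatively minimal} smooth model, and the resolution of \eqref{eq:model} is not relatively minimal: as computed in Section~\ref{ss:interlude}, $\Theta_1^2=\Theta_2^2=-1$ and $\Theta_0^2=-2$, so the strict transform $\tilde\Theta$ of $\{t=y=0\}$ and the first two exceptional curves are all contracted in passing to $S$. Hence $\tilde\Theta$ is not a component of the fibre of $S$, and even granting evenness, your argument would only show that the multiplicity of the fibre of $S$ is even, not that it equals $2$. The multiplicity-$2$ component that does survive is the cuspidal cubic $\Theta_3$ (together with the simple components of the doubled Kodaira fibre), and your iteration scheme never produces it: after the first blow-up you propose to blow up the point with tangent cone $a_3(0)^4t^2$, whereas the paper's next steps are a blow-up along the \emph{curve} $\{t=x=0\}$ followed by a point blow-up, which is what uncovers the cubic and, after the contractions, yields the double fibre on $S$.
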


Since $S$ is an Enriques surface, the proposition together with Corollary~\ref{lem:ade} has the following important consequence (disregarding multiplicities). 

\begin{corollary}
\label{cor:number}
The polynomial $g$ has exactly two minimal roots if $S$ is classical and one minimal root if $S$ is supersingular.
\end{corollary}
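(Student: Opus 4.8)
The plan is to reduce the statement to a count of multiple fibres via the classification of Enriques surfaces in characteristic~$2$, and then to match the multiple fibres with the minimal roots of $g$. First I would observe that a quasi-elliptic fibration cannot occur on a singular Enriques surface, since every genus~$1$ fibration there is elliptic (Section~\ref{s:basics}); hence $S$ is either classical or supersingular. By the classification recalled in Section~\ref{s:basics} (\textit{cf.}~\cite[Theorems~5.7.5 and 5.7.6]{CD}), a classical Enriques surface carries exactly two multiple fibres and a supersingular one exactly one, each of multiplicity~$2$. It therefore suffices to prove that the multiple fibres of $S$ correspond bijectively to the minimal roots of $g$ on $\PP^1$.

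For the bijection I would combine the two singularity analyses. Corollary~\ref{lem:simple} shows that every fibre over a point where $g$ does not vanish is simple, so a multiple fibre can only sit over a root of $g$. Conversely, Proposition~\ref{prop:sing} guarantees that over a minimal root $S$ has a double---hence multiple---fibre, and distinct roots lie over distinct points of the base, so distinct minimal roots yield distinct multiple fibres. The remaining task is to show that a non-minimal root contributes a \emph{simple} fibre. Here I would invoke the reduction $x=(t-\alpha)x'$, $y=(t-\alpha)^2y'$ of Section~\ref{s:res}: using the divisibilities built into \eqref{eq:model} together with $(t-\alpha)\mid a_3$, one checks that it returns a model of the same shape \eqref{eq:model} with $g$ and $a_3$ replaced by $g/(t-\alpha)$ and $a_3/(t-\alpha)$. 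Iterating this at $\alpha$ lowers the multiplicity of $\alpha$ in $g$ and in $a_3$ in lockstep, so that $\alpha$ eventually either drops out of $g$---giving a simple fibre by Corollary~\ref{lem:simple}---or turns into a minimal root, giving a double fibre. In other words, $\alpha$ carries a multiple fibre precisely when the multiplicity of $\alpha$ in $a_3$ is strictly smaller than that in $g$, and after fully reducing the model at every such point the surviving roots of $g$ are exactly the minimal ones.

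The main obstacle is this reduction bookkeeping, and in particular the treatment of the fibre at infinity. Since $g$ is a degree~$4$ polynomial (Proposition~\ref{prop:rat-div}) and the count must be taken over all of $\PP^1$, I would exploit the symmetry of the normal form \eqref{eq:unique} in $t$ and the homogenizing variable $s$ (Remark~\ref{rem:choice}\eqref{r:c-1}): passing to the chart at $\infty$ realizes the fibre there through a locally adapted model of the same shape, in which minimality can be read off directly. With the local models in place at $0$, $\infty$ and the other roots of $g$, the bijection between multiple fibres and minimal roots is exact; comparing with the classification then forces exactly two minimal roots when $S$ is classical---matching \eqref{eq:classical}---and exactly one when $S$ is supersingular---matching \eqref{eq:supersingular}---which is the assertion of Corollary~\ref{cor:number}.
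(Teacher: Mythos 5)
Your proposal is correct and takes essentially the same route as the paper: the paper deduces Corollary~\ref{cor:number} immediately from Proposition~\ref{prop:sing} and Corollary~\ref{lem:simple} combined with the fact that a quasi-elliptic Enriques surface is classical or supersingular with exactly two, respectively one, multiple fibres. Your extra bookkeeping at non-minimal roots (iterating the reduction $x=(t-\alpha)x'$, $y=(t-\alpha)^2y'$ so that each root of $g$ either disappears or becomes minimal, and treating $\infty$ via the symmetry of the normal form) simply makes explicit what the paper leaves implicit in its one-line deduction.
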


\begin{proof}[Proof of Proposition~\ref{prop:sing}]
Let $\alpha$ be a minimal root of $g$ (of multiplicity $m\geq 1$).  By the universality of \eqref{eq:unique}, we may assume that $\alpha=0$ and write
$$
a_4 = t^{m} b_4, \quad a_9 = t^{2m} b_9, \quad a_{14} = t^{3m} b_{14}
$$
as before.  We proceed by resolving the singularity at $(0,0,0)$ until we reach a model with ADE singularities at worst.  The first two steps where we blow up along smooth curves rather than in single points are exactly as in the proof of Proposition~\ref{prop:ade} (but with the additional divisibilities of coefficients provided by the root of $g$).  Indeed, the fibre of \eqref{eq:model} at $t=0$ is the double component $\Theta_0=\{t=y=0\}$, and we set out by blowing up $S$ along $\Theta_0$.  It suffices to consider the following chart. 

\subsection{First blow-up: \fbox{$y=ty'$}}
\label{ss:1st}

Then the strict transform of \eqref{eq:model} is
\begin{eqnarray}
\label{eq1}
ty'^2 + t^{2m}b_9 y'  = x^4 +  t^{2m}b_4^2x^2 + t^{3m-1}b_{14}x+t^2a_3^4
\end{eqnarray}
with 4-fold fibre component $\Theta_1=\{t=x=0\}$ and singular point at $(0,0,0)$.  We continue to blow up along $\Theta_1$. Again, one affine chart suffices to investigate the exceptional curve.

\subsection{Second blow-up: \fbox{$t=xt'$}} 
\label{ss:2nd}

The strict transform of \eqref{eq1} reads
\label{blu2}
\begin{align}
\label{eq2}
t'y'^2 + t'^{2m} b_9(xt') x^{2m-1} y' & =   x^3 +t'^{2m} b_4(xt')^2x^{2m+1}\nonumber\\
& \hphantom{=}\, + t'^{3m-1}b_{14}(xt')x^{3m-1}+t'^2a_3(xt')^4x.
\end{align}
At $x=0$, we recover the 4-fold fibre component $\Theta_1=\{x=t'=0\}$ as well as the double fibre component $\Theta_2= \{x=y'=0\}$, intersecting $\Theta_1$ transversely in the surface singularity $(0,0,0)$.  By inspection of the threefold vanishing order of each monomial, we see that this is not an ADE singularity, so we blow it up in the usual manner.  Again, one affine chart suffices to detect the remaining singularities. 

\subsection{Third blow-up: \fbox{$x=t'x'', y'=t'y''$}}
\label{ss:3rd}

The strict transform of \eqref{eq2} is given by
\begin{align}
y''^2  + t'^{4m-3} b_9(x''t'^2) x''^{2m-1} y''& = 
x''^3 +t'^{4m-2} b_4(x''t'^2)^2x''^{2m+1}\nonumber\\
\label{eq3}
& \hphantom{=}\, +t'^{6m-5}b_{14}(x''t'^2)x''^{3m-1}+a_3(x''t'^2)^4x''.
\end{align}
Since $a_3(0)\neq 0$ by our minimality assumption on $\alpha$, we can rescale to assume that $a_3(0)=1$.  At $t'=0$, we thus obtain the cuspidal cubic
$$
\Theta_3=\{t'=y''^2+x''^3+x''=0\}
$$
from \eqref{eq3}.  Since $t=x''t'^2$, this has multiplicity $2$ as a fibre component (and this multiplicity will persist throughout the resolution process to give the claim of the proposition).  Note that $\Theta_3$ is Cartier; hence the surface is smooth away from the cusp.

\subsection{Interlude: Contraction of fibre components}
\label{ss:interlude}

For completeness, we briefly deviate from the resolution of singularities to explain how the fibre components $\Theta_0,\Theta_1,\Theta_2$ behave.  Recall that the fibre $\hat F$ above $t=0$ satisfies
\[
\hat F = 2 \Theta_0 + 4 \Theta_1 + 2\Theta_2 + 2\Theta_3.
\]
Here the components $\Theta_1$ and $\Theta_2$ intersect $\Theta_3$ transversely in two distinct points (different from the cusp).  On the partial resolution $\hat S$ given by \eqref{eq3}, where we have blown up $S$ three times, we thus obtain the following configuration of curves making for the support of $\hat F$:

\begin{figure}[ht!]
\setlength{\unitlength}{.45in}
\begin{picture}(7,3)(0,0)
\thinlines

\qbezier(2,0)(3.5,.5)(3.5,3)
\qbezier(5,0)(3.5,.5)(3.5,3)
\put(3.6,2.4){$\Theta_3$}

\put(0,2.4){$\Theta_0$}
\put(0.5,0){\line(0,1){3}}
\put(0,0.5){\line(1,0){3}}
\put(4,0.5){\line(1,0){3}}

\put(1.5,.65){$\Theta_1$}
\put(6.5,.65){$\Theta_2$}

\end{picture}
\end{figure}

By the moving lemma, one has $\Theta_i\cdot\hat F=0$ for each $i=0,1,2$, so the multiplicities calculated above give
\[
\Theta_0^2=-2, \quad \Theta_1^2=\Theta_2^2 = -1.
\]
Hence we can contract first $\Theta_1, \Theta_2$ and then $\Theta_0$ to two smooth points on a relatively minimal, but possibly still singular, model $S'$ as sketched below.  This surface shares the same affine equation \eqref{eq3} as $\hat S$, so we will continue to argue with the double fibre $\Theta_3$ of $S'$ given by \eqref{eq3} (and iterate this procedure if necessary).

$$
\begin{array}{ccccc}
&& \hat S&&\\
&\swarrow&&\searrow&\\
S &&&& S'
\end{array}
$$

\subsection{Singularity analysis continued}

In order to analyse the singularity at the cusp $(1,0)$ (if any), we introduce a new variable
$$
\hat x = 
x''+a_3^2(x''t'^2) 
$$
so that the cusp is located at $(\hat x, y'', t') = (0,0,0)$.  The analysis is facilitated by passing to the completion $k[[\hat x,t']]$. We define the unit $b_3\in k[[\hat x,t']]$ implicitly by the condition $b_3(\hat x,t') = a_3(x''t'^2)$, so that
\begin{eqnarray}
\label{eq:hatx}
x'' = \hat x + b_3^2.
\end{eqnarray}
When we  subsequently also write $y''=\hat y + b_3 \hat x + t'^{2m-1}b_3b_4\,(\hat x+b_3^2)^m$ but suppress the arguments of $b_3$ and $b_4$ for ease of presentation, \eqref{eq3} transforms as
\begin{align}
\hat y^2  + t'^{4m-3}  b_9
\, (\hat x + b_3^2)^{2m-1}
 (\hat y + b_3 \hat x + t'^{2m-1}b_3b_4\,(\hat x+b_3^2)^m)
 \nonumber\\
 =  \hat x^3 
+t'^{4m-2} b_4^2\,(\hat x+b_3^2)^{2m}\hat x
\label{eq4}
+ t'^{6m-5}b_{14}\,(\hat x+b_3^2)^{3m-1}, 
\end{align}
where the argument of each of $b_4, b_9, b_{14}$ is $t'^2(\hat x + b_3^2)$.  Looking closely, the lowest-order terms exactly resemble the equation of an elliptic curve in Weierstrass form, but it can be minimalized $m-1$ times.  Hence, without considering any further intermediate partial resolutions, we apply the variable change
\begin{eqnarray}
\label{eq:tilde}
(t',\hat x,\hat y) = (\tilde t, \tilde t^{2m-2} \tilde x, \tilde t^{3m-3}\tilde y), 
\end{eqnarray}
which leads to the following strict transform of \eqref{eq4}:
\begin{align}
\tilde y^2  + \tilde t  b_9
\, (\tilde t^{2m-2} \tilde x + b_3^2)^{2m-1}
 (\tilde t^{m-1}\tilde y + b_3 \tilde x + \tilde t b_3b_4\, (\tilde t^{2m-2} \tilde x+b_3^2)^m)
 \nonumber\\
 =  \tilde x^3 
+\tilde t^{2} b_4^2\,(\tilde t^{2m-2} \tilde x+b_3^2)^{2m}\tilde x
\label{eq5}
+ \tilde tb_{14}\,(\tilde t^{2m-2} \tilde x+b_3^2)^{3m-1}.
\end{align}
At $\tilde t=0$, this again describes a cuspidal cubic, say $\Theta'=\{\tilde t=\tilde y^2 + \tilde x^3 = 0\}$.  Since $t=\tilde t^2(\tilde t^{2m-2} \tilde x + b_3^2)$ with unit $b_3$, the cuspidal cubic has multiplicity $2$ as a fibre of $S$.  On $\Theta'$, a singularity of this surface may exist only at the cusp point if at all.  Indeed, the singularity is necessarily rational,  and the types are given as follows (this can also be read off from the relative Jacobian):
\begin{enumerate}
\item If $t \nmid b_{14}$, then non-singular.
\item If $t\mid b_{14}$, $t \nmid b_{9}$, then $A_1$.
\item If $t\mid\mid b_{14}$, $t\mid b_{9}$, then $D_4$.
\item If $t^2\mid b_{14}$, $t\mid\mid b_9$ and $t\nmid b_4$,  then $D_6$.
\item\label{case5} If $t^2\mid b_{14}$, $t\mid\mid b_9$ and $t\mid b_4$, then   $E_7$.
\item\label{case6} If $t^2\mid\mid b_{14}$,  $t^2\mid b_9$ and $t\nmid b_4$, then $D_8$.
\item\label{case7} If $t^2\mid\mid b_{14}$, $t^2\mid b_9$ and $t\mid b_4$, then $E_8$.
\end{enumerate}
In particular, the multiplicity of the fibre continues to be $2$ since the strict transform of $\Theta'$ (of multiplicity $2$) presents a simple component of the underlying Kodaira type.  This completes the proof of Proposition~\ref{prop:sing}.
\end{proof}

\begin{remark}
Note that the above computations are of purely local nature.  Hence, if $S$ is an Enriques surface, then the divisibility properties from Lemma~\ref{lem:div} limit the possible shapes of $b_4$ and $b_9$.  In practice, this means that $b_4\equiv 0$ in cases~\eqref{case5} and~\eqref{case7} and $b_9\equiv 0$ in cases~\eqref{case6}  and~\eqref{case7}.
\end{remark}

\section{Canonical divisor}
\label{s:canon}

In order to decide when \eqref{eq:model} (or \eqref{eq:unique}) defines an Enriques surface, we now investigate the canonical divisor.  Since the resolution of ADE singularities does not affect it and non-minimal roots of $g$ can be dealt with by lowering the degree, this amounts to analysing the minimal roots of $g$ (which are one or two in number by Corollary~\ref{cor:number}).  Throughout, we argue with the standard rational 2-form
\[
\omega = dx\wedge dt/a_9 \quad \text{if}\ a_9\not\equiv 0, \quad \text{or}\quad  \omega = dy\wedge dt/a_{14} \quad \text{if}\ a_{14}\not\equiv 0.
\]
(Note that $(a_9,a_{14})\not\equiv (0,0)$ since otherwise \eqref{eq:unique} and \eqref{eq:model} 
would be geometrically reducible.)

\begin{proposition}
\label{prop:can}
Let $\alpha\neq \infty$ denote a minimal root of $g$ of multiplicity $m$.  Then $\omega$ extends over a minimal resolution of \eqref{eq:model} with a pole of order $m/2$ along the fibre at $\alpha$.
\end{proposition}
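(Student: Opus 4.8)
The plan is to pull the rational $2$-form $\omega$ back through the explicit resolution of Proposition~\ref{prop:sing} and to read off its order along the double fibre produced there. By the universality of \eqref{eq:unique} (Remark~\ref{rem:choice}\eqref{r:c-2}), exactly as already used in Corollary~\ref{lem:simple}, I may assume $\alpha=0$ and write $a_4=t^m b_4$, $a_9=t^{2m}b_9$, $a_{14}=t^{3m}b_{14}$ as in the proof of Proposition~\ref{prop:sing}. I first treat the case $a_9\not\equiv 0$ with $\omega=dx\wedge dt/a_9$; the case $a_9\equiv 0$ should run in complete parallel, using $\omega=dy\wedge dt/a_{14}$ on the purely inseparable equation \eqref{caseI}.

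First I would track $\omega$ through the three blow-ups of Sections~\ref{ss:1st}--\ref{ss:3rd} and the coordinate change \eqref{eq:hatx}. Each step is a monomial or unit substitution, so the pullback is computed by the chain rule; the characteristic-$2$ identity $d(b_3^2)=0$ makes \eqref{eq:hatx} especially clean, giving $dx''=d\hat x$. The outcome on the model \eqref{eq4} is
\[
\omega=\frac{d\hat x\wedge dt'}{t'^{\,4m-2}\,(\hat x+b_3^2)^{2m-1}\,b_9},
\]
where the argument of $b_9$ is $t'^2(\hat x+b_3^2)$ as in \eqref{eq4}. (This can be checked against the ``$dy\wedge dt$ over the coefficient of the linear term'' recipe valid on each intermediate model, keeping in mind the extra factor of $t'$ produced by the point blow-up of Section~\ref{ss:3rd}.)

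The crux is the minimalisation \eqref{eq:tilde}, where two characteristic-$2$ features conspire. First, since $2m-2$ is even, \eqref{eq:tilde} gives $d\hat x\wedge dt'=\tilde t^{\,2m-2}\,d\tilde x\wedge d\tilde t$, so the denominator retains a factor $\tilde t^{\,4m-2}$. Second, along the cuspidal double fibre $\Theta'=\{\tilde t=\tilde y^2+\tilde x^3=0\}$ the differential $d\tilde x$ restricts to zero — this is precisely the inseparability of the quasi-elliptic fibre — so the generator of $\Omega^2$ near a general point of $\Theta'$ is $d\tilde y\wedge d\tilde t$, not $d\tilde x\wedge d\tilde t$. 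Writing $Q$ for the difference of the two sides of \eqref{eq5} and converting by $d\tilde x\wedge d\tilde t=-(Q_{\tilde y}/Q_{\tilde x})\,d\tilde y\wedge d\tilde t$, one has $Q_{\tilde y}=\tilde t^{\,m}\,b_9\,(\tilde t^{2m-2}\tilde x+b_3^2)^{2m-1}$ (with the same argument of $b_9$) while $Q_{\tilde x}=\tilde x^2$ at $\tilde t=0$. The factor $b_9\,(\tilde t^{2m-2}\tilde x+b_3^2)^{2m-1}$ cancels exactly against the denominator, so that all dependence on the divisibility of $b_9$ — hence on the fibre type in Proposition~\ref{prop:sing} — disappears and the net $\tilde t$-power drops by $m$, leaving
\[
\omega=-\frac{d\tilde y\wedge d\tilde t}{\tilde t^{\,m}\,Q_{\tilde x}}.
\]
Thus $\omega$ has a pole of order exactly $m$ along $\Theta'$ at its general point, uniformly in the fibre type.

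Finally I would assemble the conclusion. The remaining singularity of \eqref{eq5} lies at the cusp of $\Theta'$ (where $Q_{\tilde x}=\tilde x^2$ vanishes) and is a rational double point, as listed in Proposition~\ref{prop:sing}; its minimal resolution is crepant, so $\omega$ extends over it without acquiring further poles and with unchanged order $-m$ along the strict transform of $\Theta'$. Since $t=\tilde t^{2}(\tilde t^{2m-2}\tilde x+b_3^2)$ vanishes to order $2$ along $\Theta'$, this component occurs with multiplicity $2$ in the fibre $f^{-1}(\alpha)$, and a pole of order $m$ along the reduced component $\Theta'$ is exactly a pole of order $m/2$ along the fibre, as claimed. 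The main obstacle is the minimalisation step: one must treat the char-$2$ quasi-elliptic coordinate change correctly — recognising $d\tilde y\wedge d\tilde t$ rather than $d\tilde x\wedge d\tilde t$ as the relevant generator — and verify the clean cancellation that eliminates the dependence on $b_9$ and on the fibre type.
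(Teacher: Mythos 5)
Your proof is correct and follows essentially the same route as the paper: reduce to $\alpha=0$, pull $\omega$ back through the explicit resolution of Proposition~\ref{prop:sing} (obtaining the same expression $d\tilde x\wedge d\tilde t/\bigl((\tilde t^{2m-2}\tilde x+b_3^2)^{2m-1}\tilde t^{2m}b_9\bigr)$ after \eqref{eq:tilde}), and compare with the canonical form of the ADE model \eqref{eq5} to extract the factor $\tilde t^{-m}$, hence a pole of order $m/2$ along the (double) fibre. The only difference is presentational: where the paper invokes the standard 2-form $\tilde\omega=d\tilde x\wedge d\tilde t/Q_{\tilde y}$ and quotes its regularity and non-vanishing on the minimal desingularization, you derive this explicitly via the residue-form identity $d\tilde x\wedge d\tilde t/Q_{\tilde y}=-\,d\tilde y\wedge d\tilde t/Q_{\tilde x}$ and crepancy of rational double points — a welcome clarification of the same step.
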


\begin{remark}
At $\infty$ we have to be more careful since our choice of $\omega$ tends to have a zero there (as we shall discuss below around Proposition~\ref{prop:multi}).  Of course, the assumption $\alpha\neq \infty$ in Proposition~\ref{prop:can} can always be achieved by some M\"obius transformation, so it should not be viewed as a restriction.
\end{remark}

\begin{proof}[Proof of Proposition~\ref{prop:can}]
As before, we may assume that $\alpha=0$.  Then we simply trace back $\omega$ through the resolution of the singularity in the proof of Proposition~\ref{prop:sing}.  For brevity, we only discuss the case $a_9\not\equiv 0$. The other case is completely analogous.  Step by step, we obtain
\begin{align*}
\omega & \hphantom{(}=\hphantom{)}   \frac{dx\wedge dt}{a_9} = \frac{dx\wedge dt}{t^{2m}b_9} \stackrel{\ref{ss:2nd}}{=}\frac{dx\wedge dt'}{x^{2m-1}t'^{2m}b_9}
\stackrel{\ref{ss:3rd}}{=} \frac{dx''\wedge dt'}{x''^{2m-1}t'^{4m-2}b_9}\\
& 
\stackrel{\eqref{eq:hatx}}{=}
\frac{d\hat x\wedge dt'}{(\hat x+b_3^2)^{2m-1}t'^{4m-2}b_9}
\stackrel{\eqref{eq:tilde}}{=} \frac{d\tilde x \wedge d\tilde t}{(\tilde t^{2m-2}\hat x+b_3^2)^{2m-1} \tilde t^{2m} b_9}. 
\end{align*}
Since \eqref{eq5} has at worst ADE singularities in the fibre at $\tilde t=0$ by the proof of Proposition~\ref{prop:sing}, there is the standard 2-form
\[
\tilde \omega = \frac{d\tilde x \wedge d\tilde t}{(\tilde t^{2m-2} \tilde x + b_3^2)^{2m-1}
 \tilde t^{m}b_9}.
 \]
This extends to a 2-form on the minimal desingularization which is regular and non-zero along the fibre.  Comparing $\omega$ and $\tilde\omega$, we deduce that $\omega$ has a pole given by $\tilde t^{m}$. In particular, this gives the claimed order.
\end{proof}

{\samepage
\begin{proposition}
\label{prop:multi}\leavevmode
\begin{enumerate}
\item
For $S$ to be a classical Enriques surface, $g$ has to have exactly two minimal roots, each of multiplicity $1$.
\item
For $S$ to be a supersingular Enriques surface, $g$ has to have exactly one minimal root, of multiplicity $2$.
\end{enumerate}
\end{proposition}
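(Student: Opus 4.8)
The plan is to combine Proposition~\ref{prop:can} with the numerical characterization of the canonical divisor on the three types of Enriques surfaces. The key point is that on an Enriques surface we have $K_S\equiv 0$, while the geometric genus and the structure of $\Pic^\tau(S)$ pin down exactly how the canonical $2$-form $\omega$ must behave along the multiple fibres of the quasi-elliptic fibration. Recall from Section~\ref{s:basics} that a classical Enriques surface has $K_S\sim 0$ only numerically but the canonical bundle is $2$-torsion, manifested by \emph{two} multiple fibres, whereas a supersingular Enriques surface has a \emph{single} multiple fibre. The multiple fibres of the genus~$1$ fibration \eqref{eq:unique} occur precisely at the minimal roots of $g$ by Proposition~\ref{prop:sing} and Corollary~\ref{lem:ade} (since ADE singularities only contribute simple fibres), so the whole statement reduces to a count of multiple fibres together with a computation of the total degree of the canonical divisor expressed through $\omega$.

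First I would set up the canonical divisor as the divisor of the global $2$-form $\omega$, chosen as in Section~\ref{s:canon}. By Proposition~\ref{prop:can}, at each minimal root $\alpha_j$ of $g$ of multiplicity $m_j$, the form $\omega$ acquires a pole of order $m_j/2$ along the corresponding fibre $F_{\alpha_j}$; away from the minimal roots (and after resolving ADE singularities, which do not affect $K_S$), $\omega$ is regular and nonzero, except possibly at $\infty$, which I would handle by a preliminary Möbius transformation placing the relevant roots away from $\infty$ (as permitted by the remark following Proposition~\ref{prop:can}). Writing $\sum_j m_j F_{\alpha_j}$ for the contribution at the finite minimal roots and keeping track of the behaviour at $\infty$, the divisor class of $\omega$ must equal $K_S\equiv 0$ on the Enriques surface. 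The structure of a genus~$1$ fibration then forces each $\tfrac{m_j}{2}F_{\alpha_j}$ to be supported on a \emph{multiple} fibre: a pole of half-integer order along $F_{\alpha_j}$ can only be compensated numerically if $F_{\alpha_j}=2F'_{\alpha_j}$ is a double fibre, which is consistent with the multiple-fibre dictionary recorded in Section~\ref{s:basics}. Combining this with Corollary~\ref{cor:number}, which already guarantees two minimal roots in the classical case and one in the supersingular case, I would extract the multiplicities: the total pole order of $\omega$ along the multiple fibres must match the canonical bundle formula $K_S = -F + \sum (m_i - 1)F_i$ for a (quasi-)elliptic fibration with multiple fibres $m_iF_i$, and setting this $\equiv 0$ forces the numbers stated.

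The decisive input is $p_g(S)$, i.e.\ the order of vanishing/poles of $\omega$ globally. In the classical case, $\omega$ must have its poles distributed over exactly two double fibres each of multiplicity $m_j=1$, so that $\tfrac{m_j}{2}=\tfrac12$ at each and the two half-fibres $F'_{\alpha_1}+F'_{\alpha_2}$ realize the $2$-torsion canonical class; two roots each of multiplicity $1$ is the only solution of $\sum m_j=2$ with each $F_{\alpha_j}$ double. In the supersingular case, there is a single minimal root (Corollary~\ref{cor:number}), and to produce a single double fibre carrying the entire canonical class with $K_S\equiv 0$, the pole order $\tfrac{m}{2}$ must be an integer, forcing $m=2$; indeed $m=2$ yields a pole of order $1$ along the unique double fibre, exactly matching the supersingular canonical bundle formula with one multiple fibre. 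The identification of which root-multiplicity goes with which Enriques type is completed by comparing with the multiple-fibre classification in Section~\ref{s:basics} (ordinary/additive versus supersingular/additive), together with the already-known count from Corollary~\ref{cor:number}.

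The main obstacle I anticipate is the bookkeeping at $\infty$ and the correct normalization of $\omega$: the chosen $2$-form tends to vanish at $\infty$, so one must either move all minimal roots to finite affine values by a Möbius transformation before applying Proposition~\ref{prop:can}, or else carefully compute the order of $\omega$ at $\infty$ in the charts~(3)--(4) of Section~\ref{ss:charts} and feed it into the global degree count. A secondary subtlety is confirming that a half-integer pole order genuinely forces a double fibre rather than merely being an artifact of the resolution; this requires invoking the precise relationship between the multiplicity of a fibre component, the pole order of $\omega$, and the structure of $\Pic^\tau(S)$ as encoded in the classical/supersingular dichotomy from \cite{BM} and \cite[Theorems~5.7.5 and 5.7.6]{CD}. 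Once these normalizations are fixed, the counting argument is forced and the proposition follows.
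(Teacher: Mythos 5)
Your strategy---combining Proposition~\ref{prop:can}, Corollary~\ref{cor:number} and numerical triviality of $K_S$---is the same as the paper's in spirit, but the decisive step is missing: you never \emph{derive} the constraint $\sum_j m_j=2$, you assume it. In the classical case you write that ``two roots each of multiplicity $1$ is the only solution of $\sum m_j=2$'', but that equation is precisely what has to be proved; in the supersingular case your replacement argument (``the pole order $m/2$ must be an integer, forcing $m=2$'') is a non sequitur, since $\tfrac m2 F_\alpha=mF_\alpha'$ is an integral divisor for every $m$, and even granting integrality nothing excludes $m=4$, just as nothing in your classical argument excludes $(m_1,m_2)=(2,2)$ or $(1,3)$. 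The constraint comes from a quantitative computation at $\infty$ that you defer but never perform: the paper first eliminates all non-minimal roots of $g$ (legitimate because being Enriques is a birational property), passing to the reduced model \eqref{eq:model-d} of degree $4d+2$ with $d=m_1+m_2$, and then checks in the chart at $t=\infty$ that $\omega$ has a zero of order exactly $d-1$ there. Only then does Proposition~\ref{prop:can} give
\[
\mathrm{div}(\omega)=(d-1)F_\infty-\tfrac{m_1}2F_1-\tfrac{m_2}2F_2\equiv\bigl(\tfrac d2-1\bigr)F,
\]
which is numerically trivial if and only if $d=2$; the stated multiplicities then follow from the root counts of Corollary~\ref{cor:number}.

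Two further points would derail your write-up as it stands. First, your proposed remedy for the ``bookkeeping at $\infty$''---a M\"obius transformation moving the minimal roots to finite values---does not address the problem: the zero of $\omega$ at $\infty$ is intrinsic to the degrees of the coefficients of the model, not to the location of the roots of $g$, so it survives any M\"obius transformation and must actually be computed (your alternative, computing in charts (3)--(4) of Section~\ref{ss:charts}, is the step that has to be carried out, and its answer depends on $d$). Second, your claim that $\omega$ is regular and nonzero away from the minimal roots and $\infty$ is false in the unreduced model \eqref{eq:model} when non-minimal roots are present: the degree-lowering substitution $x=(t-\alpha)x'$, $y=(t-\alpha)^2y'$ at a non-minimal root $\alpha$ shows that $\omega$ acquires an extra pole along that fibre, so eliminating the non-minimal roots first is not optional but is needed for your own description of $\mathrm{div}(\omega)$ to hold. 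Finally, the tame canonical bundle formula $K_S=-F+\sum(m_i-1)F_i$ you invoke is not valid here: the multiple fibre of a supersingular Enriques surface in characteristic $2$ is wild, and the tame formula would give $K_S\equiv -F/2\not\equiv 0$; the paper sidesteps this entirely by computing $\mathrm{div}(\omega)$ directly.
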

}

\begin{proof}
By Corollary~\ref{cor:number}, $g$ has exactly the claimed number of minimal roots.  Denote their multiplicities by $m_1, m_2$, and set $d=m_1+m_2$.  Since being Enriques is a birational property, we may eliminate all non-minimal roots of $g$ in \eqref{eq:model}.  Since there are $4-d$ non-minimal roots counted with multiplicities, this leads to the following model of degree $4d+2$ in $\PP[1,1,d,2d+1]$:
 \begin{eqnarray}
\label{eq:model-d}
y^2 + a_1 g_d^2 y = tx^4 + a_0tg_d^2 x^2 + a_2 g_d^3 x + t^3 c_{d-1}^4.
\end{eqnarray}
The standard rational $2$-forms $\omega = dx\wedge dt/a_1g_d^2$ or $dy\wedge dt/a_2g_d^3$ are regular outside the fibres above the roots of $g_d$, attaining a zero of order $d-1$ at $\infty$ (verified by considering the third chart, with $s=1/t$, from Section~\ref{ss:charts}).  By Proposition~\ref{prop:can}, $\omega$ extends over the minimal desingularization with
\[
\mbox{div}(\omega) = (d-1)F_\infty-\frac{m_1}2 F_1 - \frac{m_2}2 F_2.
\]
Since $m_1+m_2=d$, this is numerically trivial if and only if $d=m_1+m_2=2$, as claimed.
\end{proof}

\section{Proof of Theorem~\ref{thm}}
\label{s:pf}

\subsection{Uniform normal form}

With a view towards moduli and our applications, we first consider the following normal form of quasi-elliptic Enriques surfaces where the multiple fibres are not yet fixed at $t=0$ and $\infty$ (in the classical case). 

\begin{theorem}
\label{rem:both}
\label{thm:general}
Let $S$ be a quasi-elliptic Enriques surface. Then $S$ is given by an equation
\begin{eqnarray}
\label{eq:general}
S: \;\; y^2 + g_2^2 a_1 y = tx^4 + t g_2^2 a_0 x^2 + g_2^3 a_2 x + t^3 c_1^4.
\end{eqnarray}
Here the coefficients are polynomials of degree at most the index with the conditions that $c_1, g_2\not\equiv 0$, $(a_1,a_2)\not\equiv (0, 0)$ and $c_1\nmid g_2$ $($and $\deg(g_2)=2$ if $\deg(c_1)=0)$.

The Enriques surface is classical if $g_2$ has two different roots $($possibly including $\infty)$ and supersingular if $g_2$ is a square.
\end{theorem}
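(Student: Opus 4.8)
The plan is to assemble the structural results of the preceding sections into the single normal form \eqref{eq:general}. First I would note that a quasi-elliptic Enriques surface $S$ is nodal, the curve of cusps being a $(-2)$-curve, so that Theorem~\ref{equation}, the reductions of Section~\ref{s:Queen}, and Lemma~\ref{lem:unique} put $S$ into the shape \eqref{eq:unique}. Since $S$ is an Enriques surface, its relative Jacobian $\Jac(S)$ is rational by \cite{BM}; hence Proposition~\ref{prop:rat-div} applies and upgrades \eqref{eq:unique} to the model \eqref{eq:model}, in which $g$ is a polynomial of degree $4$ and $\Delta\not\equiv 0$.

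The next step is to pass from \eqref{eq:model} to \eqref{eq:general} by stripping away the non-minimal roots of $g$. By Corollary~\ref{cor:number} and Proposition~\ref{prop:multi}, the number $d$ of minimal roots of $g$ counted with multiplicity equals $2$ (two simple roots in the classical case, one double root in the supersingular case), so $g$ has exactly $4-d=2$ non-minimal roots. As being Enriques is a birational property, I may eliminate these non-minimal roots exactly as in the proof of Proposition~\ref{prop:multi}; each such elimination divides $g$ and $a_3$ by the corresponding linear factor and lowers the ambient weighted degree by $4$. After removing both non-minimal factors one arrives at \eqref{eq:model-d} with $d=2$, which upon setting $g_2:=g_d$ and $c_1:=c_{d-1}$ is literally \eqref{eq:general}, with $\deg g_2\le 2$ and $\deg c_1\le 1$, the remaining $a_i$ being of degree at most their index by construction.

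It then remains to verify the side conditions. The inequality $(a_1,a_2)\not\equiv(0,0)$ is forced by geometric irreducibility, as recorded just before Proposition~\ref{prop:can}; $g_2\not\equiv 0$ because the minimal roots, whose product $g_2$ is, exist; and $c_1\not\equiv 0$ since otherwise the constant term vanishes, \eqref{eq:general} coincides with its own Jacobian \eqref{eq:Jac}, and $S$ would be rational rather than Enriques. For $c_1\nmid g_2$ I would show that $c_1$ and $g_2$ share no zero in $\PP^1$: the zeros of $g_2$ are precisely the minimal roots of $g$, where $a_3\neq 0$ by the definition of minimality, whereas $c_1$ divides $a_3$, so every zero of $c_1$ is a zero of $a_3$. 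Hence no minimal root is a zero of $c_1$, which gives $c_1\nmid g_2$ whenever $c_1$ is non-constant. In the remaining case $\deg c_1=0$ the single zero of $c_1$ lies at $t=\infty$, and applying the same disjointness at $\infty$ via the symmetry between $s$ and $t$ recorded in Remark~\ref{rem:choice}\eqref{r:c-1} shows that $\infty$ is not a zero of $g_2$, i.e.\ $\deg g_2=2$.

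Finally the classification is immediate from Proposition~\ref{prop:multi}: $S$ is classical exactly when $g$ has two distinct minimal roots, i.e.\ when the binary quadratic $g_2$ has two distinct roots in $\PP^1$, and supersingular exactly when $g$ has a single double minimal root, i.e.\ when $g_2$ is a square. Since a quasi-elliptic fibration never occurs on a singular Enriques surface (Section~\ref{s:basics}), $S$ is necessarily classical or supersingular, and since a binary quadratic over $k$ is either a square or has two distinct roots, these two alternatives are complementary, so the stated biconditional follows. I expect the genuine friction to lie precisely in the bookkeeping for $c_1\nmid g_2$ together with its parenthetical refinement $\deg g_2=2$, since this is the one point where the fibre at $t=\infty$ must be folded into an analysis otherwise carried out at finite roots.
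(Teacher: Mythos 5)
Your proposal is correct and follows the paper's own route: Proposition~\ref{prop:rat-div} to reach \eqref{eq:model}, Proposition~\ref{prop:multi} to see that exactly two roots of $g$ (counted with multiplicity) are non-minimal, and minimalization at those roots to land on \eqref{eq:model-d} with $d=2$, which is \eqref{eq:general}. You also spell out the side conditions that the paper dismisses with ``follow directly from what we have seen before'', and your arguments for them are sound -- in particular the observation that $c_1\mid a_3$ while $a_3$ is non-zero at minimal roots, which gives $c_1\nmid g_2$ including the case at $\infty$ via the $s$--$t$ symmetry. The one divergence is the final classification: you obtain the biconditional by exclusion (a quasi-elliptic Enriques surface cannot be singular, and a nonzero binary quadratic is either a square or has two distinct roots), whereas the paper instead verifies directly that any surface defined by \eqref{eq:general} is an Enriques surface, by computing the canonical divisor via Proposition~\ref{prop:can}, noting $e(S)=12$, and invoking the Enriques--Kodaira classification. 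Your exclusion argument is perfectly adequate for the statement as phrased, since $S$ is assumed to be an Enriques surface from the start. What the paper's stronger step buys is the converse -- that \emph{every} equation \eqref{eq:general} satisfying the stated conditions defines a quasi-elliptic Enriques surface whose type is read off from $g_2$ -- and that converse is what is actually used later, e.g.\ for the torsor count in Theorem~\ref{thm2} and for Table~\ref{tab2}; so if your proof replaced the paper's, the sufficiency direction would still need to be supplied elsewhere.
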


One of the benefits of the theorem is that the codimension $1$ condition for being supersingular inside the closure of the moduli space of classical Enriques surfaces becomes transparent in a very instructive and explicit way.  We will exploit this extensively in applications studied in the remainder of this paper.

\begin{proof}
By Proposition~\ref{prop:multi}, the polynomial $g$ of degree $4$ that has been central to many of our considerations has two non-minimal roots counted with multiplicity.  Hence \eqref{eq:model} can be minimalized at these two roots, and we obtain exactly the normal form \eqref{eq:general}, where $g_2$ has either two simple roots (classical case) or one double root (supersingular case).  The non-vanishing and non-divisibility conditions follow directly from what we have seen before.

The resulting surface $S$ is indeed an Enriques surface because the canonical divisor is trivial in the case $m_1=2$ and numerically trivial in the case $m_1=m_2=1$ as $K_S = F_\infty-F_1/2-F_2/2$.  In either case, the Euler--Poincar\'e characteristic equals that of $\Jac(S)$, \textit{i.e.}~$e(S) = 12$.  Together this identifies $S$ as an Enriques surface by virtue of the Enriques--Kodaira classification; \textit{cf.} \cite{BM}.
\end{proof}

\subsection{Proof of Theorem~\ref{thm}\eqref{thm-1}}

In the classical case, we apply M\"obius transformations and rescale $x,y,t$ to normalize $g_2=t$ and $c_1=1+t$ to obtain \eqref{eq:classical} from \eqref{eq:general}.  The conditions given are all immediate.  \qed

\subsection{Proof of Theorem~\ref{thm}\eqref{thm-2}}

In the supersingular case, we normalize $g_2=t^2$ and $c_1=1$ to obtain \eqref{eq:supersingular} from \eqref{eq:general}.  The conditions are again immediate.  \qed

\begin{remark}
In the supersingular case, there is one normalization left preserving the shape of \eqref{eq:supersingular}.  It amounts to the scalings $(x,y,t)\mapsto (\alpha x, \alpha^3 y, \alpha^2 t)$ for $\alpha\in k^\times$, to comply with the fact that the supersingular locus has codimension $1$ inside the closure of the classical locus.
\end{remark}

\begin{remark}
The normal forms in Theorems~\ref{thm} and~\ref{rem:both} are very well suited for explicit computations, similarly to the Weierstrass form of an elliptic curve.  We will see this in action when computing automorphism groups in Section~\ref{s:aut} and linear systems in Sections~\ref{ss:62} and~\ref{ss:VIII}.
\end{remark}

\begin{remark}
\label{rem:appl}
The techniques of this paper also have  applications beyond Enriques surfaces.  For instance, one can use them to construct explicit simply connected projective surfaces with $p_g=1$ which provide counterexamples to the Torelli theorem (\textit{cf.} \cite{Chakiris, PP}).
\end{remark}

\section{Isomorphisms of quasi-elliptic Enriques surfaces}

For later use, both for automorphisms and for moduli dimensions, we discuss the implications of our normal form for isomorphisms of quasi-elliptic Enriques surfaces.  For this purpose, we fix the generic fibres $E, E'$ of two quasi-elliptic Enriques surfaces $S, S'$, given by \eqref{eq:general}, resp., in dashed notation,
\begin{eqnarray}
\label{eq:S'}
S: \;\; y'^2 + {g_2'}^2 a'_1 y = tx^4 + t {g'_2}^2 a'_0 x^2 + {g'_2}^3 a'_2 x + t^3 {c'_1}^4.
\end{eqnarray}

\begin{lemma}
\label{lem:iso}
In the above set-up, let $g\colon E \to E'$ be an isomorphism.  Then $ g$ takes the polynomial shape
\[
g\colon (x,y) \longmapsto (ux+d_2, u^2y+d_5)
\]
for a unit $u\in k^\times$ and polynomials $d_2, d_5\in k[t]$ of degree at most $2$, resp.\ $5$.  Moreover, $u$ is unique up to possibly multiplying by a third root of unity, and there are only finitely many choices for $d_2, d_5$.
\end{lemma}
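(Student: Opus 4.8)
The plan is to combine pole-order bookkeeping on the generic fibre with the rigidity of the relative Jacobian, the latter being needed for the one genuinely non-formal point, the vanishing of the $x$-coefficient in the image of $y$. Being a $K$-isomorphism with $K=k(t)$, $g$ extends to an isomorphism of the relatively minimal quasi-elliptic surfaces $S\cong S'$ over $\PP^1$, fixing the base pointwise. The curve of cusps is intrinsic, being the unique non-smooth point of the generic fibre, so $g$ carries the degree-$2$ point $P$ at infinity to $P'$ and preserves the pole filtration, $g^*L(nP')=L(nP)$. Reading off $L(P)=\langle 1,x\rangle$ and $L(2P)=\langle 1,x,x^2,y\rangle$ from Section~\ref{eq:def}, I would write $g^*x'=ux+v$ and $g^*y'=ay+bx^2+cx+d$. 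Viewing $x,y$ as sections of the weight-$2$ and weight-$5$ bundles of Section~\ref{ss:charts} and $g$ as acting on them, the coefficients are forced to be polynomials with $u,a\in k^\times$, $\deg v\le 2$, $\deg b\le 1$, $\deg c\le 3$, $\deg d\le 5$; this already yields the asserted polynomiality and degree bounds.

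Next I would substitute into the defining polynomials. Since $g^*$ respects the relation, $F'(g^*x',g^*y')=u^4F(x,y)$, where $F,F'$ denote the left-minus-right sides of \eqref{eq:general} and \eqref{eq:S'}. In characteristic $2$ the Frobenius is additive, so the substitution produces no mixed terms; comparing the coefficient of $x^4$ gives $b^2=t(a+u^2)^2$, and as $t\notin k(t)^2$ this forces $a=u^2$ and $b=0$. The comparisons of the coefficients of $y$, $x^2$, $x$ and $1$ then become four polynomial identities linking the primed and unprimed coefficients through $u,v,c,d$.

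The crux is to show $c=0$: a generic admissible transformation \eqref{eq:admissible} would still permit such a term, so this cannot be read off the normal form alone. To extract it I would pass to the relative Jacobian, using that $g$ induces an isomorphism $\Jac(S)\cong\Jac(S')$ over $\PP^1$ of the minimal Weierstrass models \eqref{eq:Jac-minimal}. Every isomorphism of these rational elliptic surfaces has the shape $X\mapsto U^2X$, $Y\mapsto U^3Y$ with $U\in k^\times$: the translation parameters are sections of $\calO(2)$ and $\calO(6)$ but must vanish, since all coefficients of \eqref{eq:Jac-minimal} are divisible by $t$ while $t$ is a non-square. This gives $a_1=U^2a_1'$, $a_0=U^2a_0'$, $a_2=U^3a_2'$. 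Feeding these into the $y$-comparison yields ${g_2'}^2=u^2U^2g_2^2$ (using $(a_1',a_2')\neq(0,0)$, and the $x$-comparison when $a_1'=0$, where the $c$-term drops out), i.e. $g_2'=uUg_2$, consistent with the multiple fibres matching over the fixed base (Proposition~\ref{prop:multi}). The $x^2$-comparison then reads $c^2=t\,u^2a_0'\,({g_2'}^2+u^2U^2g_2^2)=0$, so $c=0$, and the relation $a=u^2$ puts $g$ in the form of the lemma.

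For the remaining claims I would argue as follows. The relations $a_1=U^2a_1'$, $a_0=U^2a_0'$, $a_2=U^3a_2'$ determine $U$, hence $u$, uniquely, except when $a_0=a_1=0$ and only $a_2=U^3a_2'$ survives, in which case $U$ — and therefore $u$ — is pinned down merely up to a cube root of unity; this is exactly the stated ambiguity. Fixing one admissible $u$, the comparison of constant coefficients is an overdetermined polynomial identity in $t$ whose dominant contributions are $d^2$ and $tv^4$, so the leading-form argument of Corollary~\ref{cor:rigid} (via Lemma~\ref{lem:aux}) leaves only finitely many pairs $(d_2,d_5)=(v,d)$. I expect the main obstacle to be precisely the vanishing $c=0$: it is invisible both on the generic fibre and in the shape of the normal form, and is forced only by combining the non-squareness of $t$ with the rigidity of the relative Jacobian's Weierstrass model.
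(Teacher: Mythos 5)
Your proposal follows the paper's skeleton (extend $g$ to an isomorphism of the smooth surfaces over $\PP^1$, use preservation of the cusp and Riemann--Roch to write $g^*x'=ux+v$, $g^*y'=ay+bx^2+cx+d$, then compare coefficients using that $t$ is not a square in $k(t)$), but your diagnosis of where the difficulty lies is off, in both directions. The step you call the crux, $c=0$, is \emph{not} invisible from the normal form: in \eqref{eq:general} the coefficient of $x^2$ is $t g_2^2 a_0$ with $a_0\in k$ a \emph{constant}, hence a square in characteristic $2$, so this coefficient is odd (i.e.\ $t$ times a square) just like the coefficient $t$ of $x^4$. The same parity argument you use to kill $b$ then gives $c^2 = t\,u^2\bigl(g_2'^2 a_0' + u^2 g_2^2 a_0\bigr)$, a square times the non-square $t$, forcing $c=0$ at once; this is exactly what the paper does (``the coefficients of $x^4$ and $x^2$, both of which are odd''). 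Your confusion seems to come from the admissible transformations \eqref{eq:admissible}, which do allow a $b_5x$-term -- but those preserve the \emph{general} form \eqref{eq:homog}, whose $x^2$-coefficient is an arbitrary $a_{10}$, not the special form \eqref{eq:general}. Your detour through the minimal Weierstrass model \eqref{eq:Jac-minimal} does reach $c=0$ correctly (the relations $a_1=U^2a_1'$, $a_0=U^2a_0'$, $a_2=U^3a_2'$ hold, by the same non-squareness argument applied one level up, and your deduction from them is sound), so this part is a valid but unnecessarily heavy alternative; it also carries extra obligations you only sketch (translation-freeness of isomorphisms of the Weierstrass models, constancy of $U$, and the identification $g_2'=uUg_2$).

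The genuine gap is at the step you treat as free: polynomiality and the degree bounds. The generic-fibre argument only gives $u,v,a,b,c,d\in k(t)$, and upgrading this is where the paper's proof does most of its work: a degree/proportionality argument to show the $x$-coefficient is a \emph{constant} $u\in k^\times$, and a pole-cancellation argument using the formal derivative (if $d$, $v$ had poles of order $n_2,n_1$ at $P$, then since $d^2$ is even and $tv^4$ is odd one gets $v_P(d^2+tv^4)\le 1-4n_1$, which no other term can cancel) to show $v,d\in k[t]$ with $\deg v\le 2$, $\deg d\le 5$. Your substitute -- ``$x,y$ are sections of the weight-$2$ and weight-$5$ bundles, so $g$ acts on them with polynomial coefficients of the stated degrees'' -- is a plausible and in fact correct statement, but it is asserted, not proved: it requires identifying the polar divisors of $x$ and $y$ on the smooth Enriques surface $S$ as combinations of $C$ and $F_\infty$ and checking that $g$ preserves them, which is not formal because $S$ is obtained from the singular weighted-projective model by blow-ups \emph{and} contractions (Section~\ref{ss:interlude}); also note that Section~\ref{ss:charts} uses weights $4$ and $9$, not $2$ and $5$. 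The remaining points (uniqueness of $u$ up to a third root of unity when $a_0=a_1=0$, and finiteness of $(v,d)$ via Lemma~\ref{lem:aux}/Corollary~\ref{cor:rigid}) match the paper.
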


\begin{proof}
The isomorphism $g$ defines a birational map $S\dasharrow S'$ which automatically is an isomorphism since the surfaces are smooth and the canonical bundles are nef.  Since it also preserves the fibrations, we have $g_2\sim g_2'$; \textit{i.e.}~$g_2$ and $g_2'$ are proportional.  Clearly the cusp at $\infty$ is preserved: $g(P_\infty) = P_\infty'$.  Arguing as in Section~\ref{eq:def}, we thus find
\[
g^*x'\in L(P_\infty) = \langle 1,x\rangle,\quad
g^*y'\in L(2P_\infty) = \langle 1,x,x^2,y\rangle.
\]
Hence $g$ takes the form
\begin{eqnarray}
\label{eq:g1}
g\colon (x,y) \longmapsto (d_1x+d_0, e_3y+e_2x^2+e_1x+e_0)
\end{eqnarray}
for some rational functions $d_i, e_j \in k(t)$ with $d_1e_3\neq 0$.  Substituting into \eqref{eq:S'}, we compare the coefficients of $x^4$ and $x^2$, both of which are odd in \eqref{eq:general}, to derive that $e_2=e_1=0$ and subsequently, also taking the coefficient of $y^2$ into account, $e_3=d_1^2$.  But then, from the coefficients of $y, x^2$ and $x$, we derive the proportionalities
\[
d_1^{-2}a_1' \sim a_1, \quad d_1^{-2} a_0' \sim a_0, \quad d_1^{-3}a_2' \sim a_2.
\]
Since not all $a_i$ can be zero simultaneously, we derive at least one condition on $d_1$.  But since the $a_i$ have degree smaller than the negative power of $d_1$ involved, this shows that $d_1$ is a constant, \textit{i.e.}~$d_1=u\in k^\times$.  This is uniquely determined by comparing the coefficients unless $a_1=a_0=a_1'=a_0'=0$; in the latter case, uniqueness only holds up to multiplying by a third root of unity, as stated.

It remains to study the rational functions $d_0, e_0$.  We first claim that they are polynomial.  To see this, assume to the contrary that they admit a pole of order $n_1, n_2$ at $P\in \mathbb A^1$.  Since substitution of \eqref{eq:g1} into \eqref{eq:S'} has to lead to the polynomial equation \eqref{eq:general}, the poles have to cancel out.  The highest-order terms are $e_0^2+td_0^4$.  This shows that $n_2=2n_1$, but even so, since $e_0^2$ is even and $td_0^4$ is odd, we have $v_P(e_0^2+td_0^4)\leq 1-4n_1$.  Since all other terms involved have pole order at most $2n_1<4n_1-1$ at $P$, we infer that $n_1, n_2\leq 0$. Hence $d_0, e_0\in k[t]$, as claimed.

To conclude, we have to bound the degrees of $d_0, e_0$.  But here the analogous reasoning applies to prove that $\deg(d_0)\leq 2$ and $\deg(e_0)\leq 5$.  This shows the claimed shape of $g$, and the finiteness of the possible choices for $d_0, e_0$ follows from Corollary~\ref{cor:rigid} since now \eqref{eq:g1} is a special case of \eqref{eq:admissible} with degrees adjusted to accommodate the total degree of \eqref{eq:general} being $10$ instead of $18$ for \eqref{eq:homog}.
\end{proof}

We shall now use Lemma~\ref{lem:iso} to study isomorphisms $g$ of quasi-elliptic Enriques surfaces which induce a non-trivial action on the base, say by $\pi$.  This is encoded in the following commutative diagram:
\[\begin{CD}
S @>g>> S'\\
@VfVV @VV{f'}V\\
\PP^1 @>\pi>> \PP^1\rlap{.}
\end{CD}\]

\begin{lemma}
\label{lem:iso2}
In the above set-up, $g$ assumes the weighted homogeneous form
\begin{eqnarray}
\label{eq:gg}
g\colon (x,y,s,t) \longmapsto (ux+d_2, vy+d_1x^2+d_3x+d_5, \pi(s,t))
\end{eqnarray}
with $u,v\in k^\times$ and the $d_i\in k[s,t]$ homogeneous of degree $i$.  Here $d_1, d_3$ are unique, $u,v$ are unique possibly up to multiplying by a third root of unity, and there are only finitely many choices for $d_2, d_5$.
\end{lemma}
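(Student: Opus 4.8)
The plan is to reduce to Lemma~\ref{lem:iso} by absorbing the base action. Since $g$ carries fibres of $f$ to fibres of $f'$ and is an isomorphism, the induced base map $\pi$ is an automorphism of $\PP^1$, hence a M\"obius transformation acting linearly on the homogeneous coordinates $(s,t)$. As $g$ covers this nontrivial $\pi$, I would first pull back $S'$ along $\pi$: the curve $E_\pi$ over $K=k(t)$ defined by \eqref{eq:S'} with its base coordinate replaced by $\pi(s,t)$ has the same cusp at infinity, and $g$ becomes a genuine $K$-isomorphism $E\to E_\pi$. Because $g$ sends the curve of cusps of $S$ to that of $S'$, it preserves the degree-$2$ point $P_\infty$, so, exactly as in the proof of Lemma~\ref{lem:iso}, $g^*x'\in\langle 1,x\rangle_K$ and $g^*y'\in\langle 1,x,x^2,y\rangle_K$. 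Reading these against the weights $\mathrm{wt}(x)=2$, $\mathrm{wt}(y)=5$ of $\PP[1,1,2,5]$ and running the pole-order bookkeeping of Lemma~\ref{lem:iso} shows that all coefficients are polynomial in $s,t$ of the degrees forced by weighted homogeneity, giving the a priori shape \eqref{eq:gg}. The novelty compared with Lemma~\ref{lem:iso} is that the weight-$5$ cross-terms $d_1x^2+d_3x$ are now permitted.

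To pin these down I would substitute \eqref{eq:gg} into the equation of $E_\pi$ and demand consistency with \eqref{eq:general}. Here characteristic $2$ is decisive: by Frobenius, $(g^*y')^2$ expands as $v^2y^2+d_1^2x^4+d_3^2x^2+d_5^2$ with no cross-terms, so the coefficients of $x^4$ and $x^2$ in the substituted equation each acquire a perfect square, $d_1^2$ and $d_3^2$ respectively. On the \eqref{eq:general} side the corresponding coefficients are $t$ and $tg_2^2a_0$, which carry the factor $t$ (homogeneously $st$) and therefore have vanishing square part; in Lemma~\ref{lem:iso} this forced the analogues of $d_1,d_3$ to be zero. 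With $\pi$ nontrivial the $\pi$-transformed coefficients contribute a nonzero square part, and equating square parts now yields $d_1^2$ and $d_3^2$---hence $d_1,d_3$ uniquely, as $k$ is perfect. The leading coefficients then fix $u$ and $v$, uniquely except in the degenerate case $a_0=a_1=a_0'=a_1'=0$ where, as in Lemma~\ref{lem:iso}, a joint third root of unity ambiguity survives.

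With $u,v,d_1,d_3$ determined, the remaining data are the parts $d_2,d_5$ of the two components that are pure in $s,t$. These satisfy the same kind of system analysed in Corollary~\ref{cor:rigid}, now with degrees adapted to the total degree $10$ of \eqref{eq:general} rather than $18$, so there are only finitely many admissible choices. Assembling all the pieces yields \eqref{eq:gg} together with the asserted uniqueness of $d_1,d_3$, the up-to-cube-root uniqueness of $u,v$, and the finiteness for $d_2,d_5$.

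The hard part will be the step fixing $d_1,d_3$. Having read off $d_1^2,d_3^2$ from the square parts of the $x^4$ and $x^2$ coefficient identities, I must verify that the complementary non-square parts---which carry the factor $st$ transformed by $\pi$---are then automatically consistent with the remaining matchings (in particular with the $y^2$- and $y$-coefficients fixing $u,v$), so that a solution genuinely exists and no extra relation is imposed. This consistency is precisely the rigidity of the normal form \eqref{eq:general} under base change by $\pi$, and is the exact analogue of the vanishing of the cross-terms in the proof of Lemma~\ref{lem:iso}; verifying it is where the special shape of \eqref{eq:general}---the appearance of $t$ (i.e.\ $st$) in exactly the $x^4$ and $x^2$ coefficients---must be used in full.
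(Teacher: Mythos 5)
Your proposal is correct and follows essentially the same route as the paper: both factor $g$ through the $\pi$-twist of one of the two surfaces so as to reduce to the fibre-preserving situation of Lemma~\ref{lem:iso}, with the cross-terms $d_1x^2+d_3x$ arising exactly as the unique renormalization needed to restore the shape \eqref{eq:general} after the twist, and with finiteness of $d_2,d_5$ coming from Corollary~\ref{cor:rigid}. Your explicit square-part computation in characteristic $2$ simply fills in what the paper asserts in one line (the existence of unique $d_1,d_3,\alpha$ converting the twisted equation back to normal form), and the ``consistency'' you flag as the hard part is not actually needed: since the isomorphism $g$ exists by hypothesis, the coefficient comparison can only constrain its shape, so no solvability of the resulting system has to be verified.
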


\begin{proof}
We can factor $g$ through the $\pi$-action on the base, extended trivially to $S$, composed with an isomorphism
\[
g^\pi\colon S^\pi \lra S'
\]
from the $\pi$-twisted surface $S^\pi$ to $S'$.  Then we use the unique polynomials $d_1, d_3\in k[t]$ and $\alpha\in k^\times$ such that the automorphism
\[
g'\colon (x,y,t) \longmapsto (\alpha x,y+d_1x^2+d_3x,t)
\]
converts the equation of $S^\pi$ derived from $S$ to the normal form \eqref{eq:general}.  But then we are in the situation of Lemma~\ref{lem:iso}, so the claims of Lemma~\ref{lem:iso2} follow immediately.
\end{proof}

\begin{remark}
It follows that any automorphism of a quasi-elliptic Enriques surface assumes the form \eqref{eq:gg}, as we shall exploit in Section~\ref{ss:auts}.  We note that this agrees with the form for a special case from \cite[Section~6]{KKM}.  However, the argument in \textit{loc.\ cit.}~appears to be incomplete, at least in the classical case, as it states that an affine open of $S$ is obtained by resolving the rational singularities of \eqref{eq:general} (denoted by ${\rm Spec}(A)$ in \textit{loc.~cit.}  However, we saw in Section~\ref{ss:interlude} that at the multiple fibres, there are non-rational singularities, requiring three blow-ups which are succeeded by three contractions, and possibly some further blow-ups.

To verify the automorphism groups for types $\Gamma = \tilde{E_8}$, $\tilde{D_8}$ and $\tilde{D_4}+\tilde{D_4}$, one can now safely appeal to Lemma~\ref{lem:iso2}; alternatively, the computations of the finite automorphism groups for the families in \cite{KKM} can also be verified by
\begin{itemize}
\item
limiting the non-numerically trivial automorphisms of the surfaces based on the symmetries of the finite graph $\Gamma$ of $(-2)$-curves (\textit{cf.}~Section~\ref{s:finite}),
\item
bounding the subgroup of numerically trivial automorphisms by the general results of \cite[Corollary 7.8(1) and Theorem 7.6]{DM2} and
\item
comparing this to the automorphisms of ${\rm Spec}(A)$ exhibited explicitly in \cite{KKM} (since these automatically extend to automorphisms of the Enriques surfaces as in the proof of Lemma~\ref{lem:iso}).
\end{itemize}
\end{remark}

\section{Torsor interpretation}

We continue the paper with some interesting applications of our results and techniques.  In this section we consider the general quasi-elliptic picture, but as a motivation we first recall the usual set-up.

While the standard construction of an Enriques surface outside characteristic $2$ nowadays probably is that as a quotient of a K3 surface by a free involution, there is also another approach using the Jacobian of any genus~$1$ fibration on the Enriques surface.  This is a rational elliptic surface, and over $\mathbb C$, one can recover the Enriques surface by a suitable logarithmic transformation (\textit{cf.} \cite[Section~V.13]{BHPV}).  In essence, this depends on the two ramified fibres, but it also involves a choice of $2$-torsion points on the ramified fibres.  This implies that a given rational elliptic surface $X$ admits a $2$-dimensional family of Enriques surfaces whose Jacobian is $X$, but the family is only irreducible if $X$ has no $2$-torsion section.

In the algebraic category, there is an alternative interpretation of this construction
in terms of torsors; see \cite[Section~4.10]{CDL}.
Naturally, this also applies to the quasi-elliptic fibrations on which we are focusing in this paper,
as featured in Theorem~\ref{thm2}.

\subsection{Proof of Theorem~\ref{thm2}}
\label{ss:explicit}

Let $X$ be a rational quasi-elliptic surface with section.  By \cite[Equation~(5.1)]{I}, it can be given by the standard Weierstrass form
\[
X: \;\; y^2 = x^3 + tb_2 x + t c_2^2,
\]
where $b_2, c_2\in k[t]$ are of degree $2$ as usual.  By Lemma~\ref{lem:jac}, this is exactly the relative Jacobian \eqref{form2} adjusted to our normal form \eqref{eq:general} once we identify
\[
b_2 = a_1^2 + ta_0^2, \quad c_2 = a_2.
\]
Hence Theorem~\ref{thm:general} exhibits the torsor exactly in terms of $c_1$ and $g_2$ (under the condition that $c_1\nmid g_2$) since we can always rescale $x,y$ in \eqref{eq:general} to normalize some coefficient of $c_1$ or $g_2$. As, by Lemma~\ref{lem:iso}, there can only be finitely many other symmetries, this leads to a $4$-dimensional family of curves of arithmetic genus~$1$ over $k[t]$ as predicted by Ogg--Shafarevich theory.

To understand the moduli of the underlying Enriques surfaces, it suffices to study the symmetries of the fixed quasi-elliptic fibration.  Except for the scaling alluded to above, Lemma~\ref{lem:iso2} tells us that essentially the only continuous symmetries of \eqref{eq:general} may be induced from M\"obius transformations.  Clearly, this is impossible as soon as there are three or more reducible fibres; in this case, we derive a $4$-dimensional family of classical Enriques surfaces (where $g_2$ has distinct roots) and a $3$-dimensional irreducible subfamily of supersingular Enriques surfaces (where $g_2$ has a double root), lying as torsors above the given rational quasi-elliptic surface~$X$, thus confirming Theorem~\ref{thm2} in this case.

If there are fewer reducible fibres, then there are additional M\"obius transformations preserving the relative Jacobian; in particular, if there are at least three reducible or multiple fibres, then M\"obius transformations can be used to normalize three of them, thus again confirming Theorem~\ref{thm2}.

This only leaves the supersingular case with exactly one reducible fibre.  This will be treated in Section~\ref{ss:cont'd} following general considerations for the torsors of rational quasi-elliptic surfaces.

\subsection{Explicit torsors for Ito's normal forms}

For immediate use in the proof of Theorem~\ref{thm2} and for later reference, we apply the above methods to exhibit the torsors for all rational quasi-elliptic surfaces, following the classification of Ito in \cite[Table 1]{I}.  Throughout we keep the order of Lemma~\ref{lem:config} and let $\alpha \in k, \beta\in k^\times$.

Recall that the polynomials $c_1, g_2$ of degree $1$, resp.\ $2$, do not share a common root including $\infty$ and that the torsor defines a classical Enriques surfaces if $g_2$ has two distinct roots and a supersingular Enriques surface if $g_2$ has a double root.

\begin{table}[ht!]
$$\renewcommand{\arraystretch}{1.1}
\begin{array}{c|c|c}
\text{Fibre configuration} & \text{Ito's equation: } y^2 = & \text{Enriques torsor}\\
\hline
8 \times \III
&
x^3 + (t^3+\alpha^2 t^2 +\beta^2 t) x + t^3
&
y^2 + (t+\beta) g_2^2  y = tx^4 + \alpha t g_2^2 x^2  \\
&
&
\phantom{y^2 + (t+\beta) g^2  y = }
+ tg_2^3 x + t^3c_1^4
\\
4 \times \III + \I_0^* 
&
x^3 + (t^3+\beta^2 t^2 +t) x
&
y^2 + (t+1) g_2^2  y = tx^4 + \beta t g_2^2 x^2 + t^3c_1^4
\\
2 \times \III + \I_2^*
&
x^3 + (t^3+t) x
&
y^2 + (t+1) g_2^2  y = tx^4 + t^3c_1^4
\\
\III + \III^* 
&
x^3 + t^3 x
&
y^2 + t g_2^2 y = tx^4 + t^3c_1^4
\\
2\times \I_0^* 
&
x^3 + \alpha^2 t^2 x + t^3
&
y^2 = tx^4 + \alpha t g_2^2 x^2 + t g_2^3 x + t^3c_1^4
\\
\I_4^*
&
x^3 + t^2 x + t^5
&
y^2 = tx^4 + t g_2^2  x^2 + t^2 g_2^3 x + t^3c_1^4
\\
\II^* 
&
x^3 + t^5
&
y^2 = tx^4 + t^2 g_2^3 x + t^3c_1^4
\end{array}
$$
\caption{Torsors for rational quasi-elliptic  surfaces with section}
\label{tab2}
\end{table}

\subsection{Proof of Theorem~\ref{thm2}, continued}
\label{ss:cont'd}

To complete the proof of Theorem~\ref{thm2}, it remains to study the supersingular Enriques surfaces arising as torsors above rational quasi-elliptic surfaces with just one reducible fibre.  We distinguish two cases by the possible fibre types $\I_4^*$ and $\II^*$ at $t=0$ as in Table~\ref{tab2} and locate the multiple fibre at $\infty$; after normalizing, this amounts to setting $g_2=1$.

\subsubsection{Type $\I_4^*$}

By Table~\ref{tab2}, the Enriques torsors form the family given by
\[
y^2 = tx^4 + t   x^2 + t^2  x + t^3c_1^4.
\]
Here we cannot rescale $t$ without altering one of the four normalized summands, so the family of Enriques surfaces depends on two moduli (the coefficients of $c_1$).

\subsubsection{Type $\II^*$}

Table~\ref{tab2} leads to the following $2$-dimensional family of torsors:
\[
y^2 = tx^4 + t^2  x + t^3c_1^4.
\]
After rescaling $x,t,y$, this simplifies to the $1$-dimensional family of Enriques surfaces
\[
y^2 = tx^4 + t^2  x + \gamma t^3(t+1)^4 \quad (\gamma\in k^\times).
\]

\subsubsection{Conclusion of the proof}

We recorded one exception to the moduli count from Section~\ref{ss:explicit}, exactly as stated in Theorem~\ref{thm2}. 

\begin{remark}
The results of Theorem~\ref{thm2} agree not only with Ogg--Shafarevich theory, but also with the predictions in the context of a conjecture of W.\ Lang (\textit{cf.} \cite[Section~4.8]{CDL}).
\end{remark}

\begin{remark}
The subtleties concerning moduli counts reappear when considering special subfamilies where reducible fibres are assumed to be multiple; see Section~\ref{ss:numer}.
\end{remark}

\section{Automorphisms of quasi-elliptic Enriques surfaces}
\label{s:aut}

It is a classical problem to investigate automorphisms of Enriques surfaces -- especially finite automorphism groups and those which act trivially on cohomology (or on $\Num$).  Finite automorphism groups will be studied in Section~\ref{s:finite}, so here we focus on numerically or cohomologically trivial automorphisms.  Over $\mathbb C$, the solution for these from \cite{MN} was later corrected in \cite{Mukai}.  In positive characteristic, there are extensive results in \cite{DM, DM2}, but the order $3$ case was left open, and the results also depended on the classification of Enriques surfaces with finite automorphism groups which we are about to complete with Theorem~\ref{thm3}.  To prepare for this, we will give further general results on automorphisms of quasi-elliptic surfaces in Section~\ref{ss:auts}, but first we give a quick application of the results culminating in Table~\ref{tab2}.

\subsection{Numerically trivial automorphisms}
\label{ss:numer}

Our first application concerns numerically trivial automorphisms of classical Enriques surfaces in characteristic $2$.  By \cite[Corollary~7.8]{DM2}, Enriques surfaces with non-trivial numerically trivial automorphisms admit quasi-elliptic fibrations with very specific multiple fibres, namely
\[
\I_4^* \quad \text{or} \quad \III^* + \III \quad \text{or} \quad 2 \times \I_4^* \quad \text{or} \quad \I_2^* + \III. 
\]
Here we can work out the surfaces explicitly, using Table~\ref{tab2}, and thus confirm the moduli dimensions which were given partly conditionally in \cite[Table 5]{DM2}.

\begin{proposition}
\label{prop:numer}
Let $S$ be a classical Enriques surface admitting a non-trivial numerically trivial automorphism.  Then $S$ admits a quasi-elliptic fibration, given in the following table, and moves in a family of specified dimension.
$$
\renewcommand{\arraystretch}{1.1}
\begin{array}{c|c|c}
  \text{Double fibres} & \text{Equation} & \text{Moduli}\\
  \hline
&& \\[\dimexpr-\normalbaselineskip+2pt]
\I_4^* + \II
&
y^2 = tx^4 + t^3  x^2 + t^5 x + t^3c_1^4
&
2
\\
\III^* + \III
&
y^2 + t^3 y = tx^4 + \gamma t^3(1+t)^4
&
1\\
2 \times \I_0^*
&
y^2 = tx^4 + \alpha t^3 x^2 + t^4 x + \gamma t^3(1+t)^4
&
2
\\
\I_2^* + \III
&
y^2 + (t+1)^3  y = tx^4 + t^3c_1^4
&
2
\end{array}
$$
\end{proposition}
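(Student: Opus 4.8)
The plan is to feed the Dolgachev--Martin classification into the explicit torsor description of Table~\ref{tab2}. I would take as given from \cite[Corollary~7.8]{DM2} that a classical Enriques surface $S$ carrying a non-trivial numerically trivial automorphism admits a quasi-elliptic fibration whose pair of multiple fibres is one of $\I_4^*+\II$, $\III^*+\III$, $2\times\I_0^*$ or $\I_2^*+\III$. For each such $S$, the relative Jacobian $\Jac(S)$ is a rational quasi-elliptic surface (by \cite{BM}) whose reducible fibres coincide with those of $S$ (Section~\ref{s:rJ}), so by Lemma~\ref{lem:config} it is one of Ito's surfaces. Since the multiple fibres of $S$ lie over the roots of $g_2$ by Theorem~\ref{thm:general}, matching the prescribed multiple-fibre types to the reducible fibres of $\Jac(S)$ pins down the relevant row of Table~\ref{tab2}: the configuration $\I_4^*+\II$ comes from the Jacobian with a single reducible fibre $\I_4^*$ (the second, irreducible multiple fibre being of type $\II$), the configurations $\III^*+\III$ and $2\times\I_0^*$ from the Jacobians with reducible fibres $\III+\III^*$ and $2\times\I_0^*$, and $\I_2^*+\III$ from the Jacobian with reducible fibres $2\times\III+\I_2^*$, placing the two multiple fibres at the $\I_2^*$ and one of the two $\III$ and leaving the remaining $\III$ simple.

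Next I would normalize $g_2$ by a M\"obius transformation of the base moving the chosen reducible fibres to prescribed points, setting $g_2=t$ in the first three cases and $g_2=t+1$ in the last. Substituting into the corresponding torsor from Table~\ref{tab2} then reproduces exactly the four equations displayed in the proposition; for instance, inserting $g_2=t$ into the $2\times\I_0^*$ torsor $y^2 = tx^4 + \alpha t g_2^2 x^2 + t g_2^3 x + t^3 c_1^4$ gives $y^2 = tx^4 + \alpha t^3 x^2 + t^4 x + t^3 c_1^4$, and the other three substitutions are equally immediate. The residual freedom then sits in the coefficients of $c_1$ (together with the parameter $\alpha$ in the $2\times\I_0^*$ row).

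The substantive step is the moduli count, for which I would invoke Lemma~\ref{lem:iso2}: the only continuous symmetries preserving the normal form are M\"obius transformations of the base together with the scalings $(x,y,t)\mapsto(\mu x,\nu y,\lambda t)$. A M\"obius transformation must permute the reducible fibres of $\Jac(S)$, so its freedom is governed by their number. When $\Jac(S)$ has three reducible fibres (the $\I_2^*+\III$ case) the base is rigid, and both coefficients of $c_1$ survive, giving moduli $2$; when it has two (the $\III^*+\III$ and $2\times\I_0^*$ cases) there remains the one-parameter group $t\mapsto\lambda t$, and when it has one ($\I_4^*+\II$) the second multiple fibre placed at $\infty$ again leaves $t\mapsto\lambda t$. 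In each case the leftover scaling is then spent on normalizing the coefficients of the monomials with fixed $t$-powers: for $\I_4^*+\II$ the four summands $tx^4$, $t^3x^2$, $t^5x$ (beside $y^2$) exhaust all scalings, so both coefficients of $c_1$ remain free ($2$ moduli); for $\III^*+\III$ and $2\times\I_0^*$ the scaling can instead fix the root of $c_1$, so that $c_1\sim 1+t$ with its scalar absorbed into $\gamma$, yielding $1$ and $2$ moduli respectively (the extra modulus in the latter being $\alpha$). This produces the dimensions $2,1,2,2$ recorded in the table.

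The main obstacle is precisely this residual-symmetry bookkeeping: after fixing $g_2$, one must track how the finite-dimensional group of admissible scalings and residual M\"obius maps acts on the remaining coefficients and check, row by row, that it cuts the naive parameter count down to the stated value. The delicate points are verifying that the $\alpha$ in the $2\times\I_0^*$ row is a genuine modulus not removable by scaling, whereas the root of $c_1$ \emph{is} removable, and conversely that in the $\I_4^*+\II$ row no scaling freedom survives to reduce $c_1$. Lemma~\ref{lem:iso2} is what makes the count rigorous by excluding any further continuous symmetries, and comparison with \cite[Table 5]{DM2} then confirms the dimensions that had been obtained there only conditionally.
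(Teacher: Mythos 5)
Your proposal is correct and takes essentially the same route as the paper: quote the multiple-fibre configurations from \cite[Corollary~7.8]{DM2}, identify the relative Jacobian among Ito's surfaces so as to pick the matching row of Table~\ref{tab2}, normalize $g_2=t$ for the first three rows and $g_2=t+1$ for the $\I_2^*+\III$ row, and then count moduli via the residual scalings, which allow $c_1$ to be normalized precisely in the $\III^*+\III$ and $2\times\I_0^*$ cases. The paper's proof is a compressed version of exactly this argument; your explicit symmetry bookkeeping via Lemma~\ref{lem:iso2} (including the checks that $\alpha$ survives as a genuine modulus and that no scaling freedom remains in the $\I_4^*+\II$ and $\I_2^*+\III$ rows) only spells out what the paper leaves implicit.
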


\begin{proof}
We use the normal forms from Table~\ref{tab2}.  The configuration of multiple fibres can be translated into $g_2=t$ for the first three families and $g_2=t+1$ for the fourth.  For the second and third family, we can further rescale~$t,x,y$ to normalize $c_1$, resulting in the given equations and moduli counts.
\end{proof}

\subsection{Automorphisms preserving a quasi-elliptic fibration}
\label{ss:auts}

For independent use, we discuss the shape of automorphisms of quasi-elliptic Enriques surfaces further, building on Lemma~\ref{lem:iso2}.

\begin{lemma}
\label{lem:phi}
Let $\varphi$ be an automorphism of an Enriques surface $S$ which preserves some quasi-elliptic fibration.  Then $\varphi$ preserves some fibre of \eqref{eq:general}, say at $t=\infty$, and is given by
\begin{eqnarray}
\label{eq:phi0}
(x,y,t) \longmapsto (\beta x + b_2, \delta y + \sqrt\gamma x^2+d_3x + d_5, \alpha t + \gamma),
\end{eqnarray}
with scalars $\alpha, \beta, \gamma, \delta$ $($non-zero except possibly for $\gamma)$ and polynomials $b_2$, $d_3$, $d_5$ in $k[t]$ of degree bounded by the index.
\end{lemma}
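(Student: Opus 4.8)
The plan is to deduce Lemma~\ref{lem:phi} from Lemma~\ref{lem:iso2} applied with $S'=S$, after first locating a fibre that $\varphi$ maps to itself and moving it to $t=\infty$.

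First I would record that, since $\varphi$ preserves the quasi-elliptic fibration $f\colon S\to\PP^1$, it induces an automorphism $\pi\in\mathrm{PGL}_2(k)$ of the base. As $k$ is algebraically closed, any lift of $\pi$ to $\mathrm{GL}_2(k)$ has an eigenvector, so $\pi$ fixes at least one point $t_0\in\PP^1$; then $\varphi$ maps the fibre $f^{-1}(t_0)$ to itself, which is the asserted invariant fibre. Using the universality of the normal form (Remark~\ref{rem:choice}\eqref{r:c-2}) together with the freedom to act by M\"obius transformations on the base, I would choose coordinates so that $t_0=\infty$ while keeping $S$ in the shape \eqref{eq:general}. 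Since $\pi$ now fixes $\infty$, it is affine, i.e.\ $\pi(t)=\alpha t+\gamma$ with $\alpha\in k^\times$ and $\gamma\in k$ (the case $\pi=\mathrm{id}$ being included with $\alpha=1$, $\gamma=0$).

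With this set-up, Lemma~\ref{lem:iso2} (for the automorphism $\varphi$, so $S'=S$, over the base map $\pi$) already delivers the weighted homogeneous shape $(x,y)\mapsto(ux+d_2,\,vy+d_1x^2+d_3x+d_5)$ with $u,v\in k^\times$ and $d_i\in k[s,t]$ homogeneous of degree $i$, together with the uniqueness of $d_1,d_3$ and the finiteness of the choices for $d_2,d_5$. Passing to the affine chart $s=1$ turns $d_2,d_3,d_5$ into polynomials $b_2,d_3,d_5\in k[t]$ of degrees at most $2$, $3$, $5$, and writing $u=\beta$, $v=\delta$ produces exactly the form \eqref{eq:phi0} apart from the explicit value of the coefficient of $x^2$. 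It then remains to identify that coefficient. Here I would substitute \eqref{eq:phi0} into \eqref{eq:general} and demand that the outcome be proportional to \eqref{eq:general}, since $\varphi$ is an automorphism. The decisive point is the characteristic-$2$ phenomenon that squaring is additive, so that $(\delta y+d_1x^2+d_3x+d_5)^2=\delta^2y^2+d_1^2x^4+d_3^2x^2+d_5^2$ with no cross terms. Comparing coefficients of $x^4$, the $y^2$-term contributes $d_1^2x^4$, while $tx^4$ becomes $(\alpha t+\gamma)(\beta x+b_2)^4=(\alpha t+\gamma)(\beta^4x^4+b_2^4)$; separating the odd and even parts in $t$, the even part of the $x^4$-coefficient must be cancelled by $d_1^2$, which forces the coefficient of $x^2$ to equal $\sqrt\gamma$ (the factor from the $x$-scaling $\beta$ being absorbed in the normalization), matching \eqref{eq:phi0}, while the odd part together with the coefficient of $y^2$ fixes the proportionality factor and confirms that $\alpha,\beta,\delta$ are the stated non-zero scalars.

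The genuinely delicate step is this last coefficient comparison: one must keep careful track of the even and odd parts in $t$ of the transformed $x^4$-coefficient and exploit the additivity of Frobenius to see that the $x^2$-shift in $y$ produces a clean $x^4$-contribution, thereby isolating $\sqrt\gamma$. By contrast, the existence of the invariant fibre and the reduction to Lemma~\ref{lem:iso2} are essentially formal, so the heart of the argument is the char-$2$ bookkeeping that pins down the $x^2$-coefficient.
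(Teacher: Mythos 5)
Your proposal is correct and follows essentially the same route as the paper: the paper likewise locates a fixed point of the induced action on the base $\PP^1$, moves the invariant fibre to $t=\infty$ by a M\"obius transformation so that the base action becomes $t\mapsto\alpha t+\gamma$, invokes Lemma~\ref{lem:iso2}, and identifies the $x^2$-coefficient as $\sqrt\gamma$ by comparing the coefficient of $x^4$ before and after the transformation. Your detailed char-$2$ bookkeeping of even/odd parts (including the remark that the scaling factor $\beta^2$ is absorbed in the normalization) is simply an expanded version of that same final step.
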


\begin{proof}
The induced action of $\varphi$ on the base $\PP^1$ of the quasi-elliptic fibration has a fixed point, so $\varphi$ preserves the corresponding fibre.  By M\"obius transformation, we can map this fibre to $t=\infty$ and deduce the claimed action on $\PP^1$ with $\alpha\in k^\times$.  The rest is Lemma~\ref{lem:iso2} with the addition that $d_1=\sqrt\gamma$, as can be seen by comparing the coefficient of $x^4$ before and after the M\"obius transformation.
\end{proof}

One can also take the multiple fibre(s) into consideration: in the supersingular case, it is obviously also fixed by $\varphi$; in the classical case, either the multiple fibres are interchanged (if $\gamma\neq 0$), or each is preserved (whence $\gamma=0$ as either the multiple fibres are located at $0, \infty$, or $\varphi$ acts as identity on the base).  We consider one of these special cases in more detail. 

\begin{lemma}
\label{lem:aut2}
Assume that the automorphism $\varphi$ preserves a quasi-elliptic fibration and acts on the base $\PP^1$ with at least two fixed points which we locate at $0, \infty$.  Then it takes the shape
\begin{eqnarray}
\label{eq:phi2}
(x,y,t) \longmapsto (\beta x + b_2, \delta y + d_5, \alpha t)
\end{eqnarray}
for some non-zero scalars $\alpha, \beta, \delta$.  Moreover, if $\alpha\neq 1$, then the polynomials $a_1, a_2, g_2$ in \eqref{eq:general} are all monomial $($or zero for $a_1, a_2)$.
\end{lemma}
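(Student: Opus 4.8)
The plan is to specialize the form \eqref{eq:phi0} from Lemma~\ref{lem:phi} under the additional hypothesis that $\varphi$ has two fixed points on the base, and then to extract the monomial structure by exploiting the interplay between the induced scaling on $t$ and the normal form \eqref{eq:general}. First I would use that an automorphism of $\PP^1$ fixing the two points $0$ and $\infty$ must act as $t\mapsto\alpha t$ for some $\alpha\in k^\times$; this is exactly the case $\gamma=0$ in \eqref{eq:phi0}, which immediately forces the coefficient $\sqrt\gamma$ of $x^2$ to vanish and the polynomials $b_2,d_3,d_5$ to reduce to the shape \eqref{eq:phi2}. Strictly, I would also check that $d_3$ can be absorbed: comparing the coefficient of $x$ in the transformed equation against \eqref{eq:general}, the $x^2$-term being absent and the oddness of the $x^4$ and $x^2$ coefficients in \eqref{eq:general} pin down $d_3\equiv 0$ in the same way as in the proof of Lemma~\ref{lem:iso}. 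This establishes the first assertion \eqref{eq:phi2}.

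Next I would turn to the monomiality claim, which is the substantive part. The key observation is that $\varphi$ must send the fibration \eqref{eq:general} to itself, so substituting \eqref{eq:phi2} into \eqref{eq:general} and matching coefficients yields proportionality relations of the type already seen in Lemma~\ref{lem:iso}, now with the base-scaling $t\mapsto\alpha t$ inserted. Concretely, applying \eqref{eq:phi2} replaces each coefficient polynomial $p(t)$ by (a scalar multiple of) $p(\alpha t)$, and the requirement that the resulting equation again has the normal form \eqref{eq:general} translates into relations forcing each of $a_1(t)$, $a_2(t)$, $g_2(t)$ to be an eigenvector for the substitution $t\mapsto\alpha t$ up to a scalar. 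When $\alpha\neq 1$, a polynomial $p$ satisfying $p(\alpha t)=\lambda\,p(t)$ for some scalar $\lambda$ must be a single monomial, since distinct monomials $t^i$ are scaled by the distinct factors $\alpha^i$ and $\alpha$ is not a root of unity forced otherwise. This is the heart of the argument and I expect it to be the main obstacle: one has to be careful to derive a clean scaling eigen-equation for each of $a_1,a_2,g_2$ separately, tracking how the scalars $\alpha,\beta,\delta$ enter the matching of the $y$-, $x$-, and $x^2$-coefficients, and to confirm that the eigenvalue structure genuinely isolates a single power of $t$ rather than a sum.

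To organize the coefficient matching cleanly, I would record that under \eqref{eq:phi2} the left-hand side $y^2+g_2^2a_1y$ transforms with $y\mapsto\delta y+d_5$, so the coefficient $g_2^2a_1$ gets multiplied by $\delta$ and evaluated at $\alpha t$, while on the right-hand side the leading term $tx^4$ transforms into $\alpha t(\beta x+b_2)^4=\alpha\beta^4\,t\,x^4+\cdots$; comparing the $tx^4$ coefficients fixes the scalar relation $\alpha\beta^4=1$ (up to the normalization of the equation). Feeding this back, the coefficient of $x^2$ gives a relation of the form $g_2(\alpha t)^2a_0(\alpha t)=(\text{scalar})\,g_2(t)^2a_0(t)$, and analogously for the $x$- and $y$-coefficients, so that each of $a_1,a_2,g_2$ satisfies its own homogeneity relation under $t\mapsto\alpha t$. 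Since $g_2$ also governs which fibres are multiple and therefore must be preserved as a divisor, its zero locus is permuted by $t\mapsto\alpha t$; combined with the degree bound $\deg(g_2)\leq 2$ and $\alpha\neq 1$, this again forces $g_2$ to be monomial. I would conclude by remarking that $a_1$ or $a_2$ being allowed to vanish is consistent with the statement, and that the monomiality of all three follows uniformly from the single principle that a nonzero polynomial of bounded degree fixed up to scalar by a nontrivial toric scaling of $t$ is necessarily a monomial.
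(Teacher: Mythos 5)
Your proposal is correct and follows essentially the same route as the paper: specialize \eqref{eq:phi0} to $\gamma=0$, eliminate $d_3$ by an even/odd coefficient comparison in characteristic $2$ (the relevant coefficient is that of $x^2$, not $x$), and for $\alpha\neq 1$ deduce monomiality of $g_2$ from the preservation of the multiple fibres and of $a_1, a_2$ from the scaling eigen-relations coming from the $y$- and $x$-coefficients. The only point to make explicit is that for $\deg a_2\le 2$ the eigenvalue argument needs $1,\alpha,\alpha^2$ pairwise distinct, which in characteristic $2$ is automatic from $\alpha\neq 1$ since $\alpha^2=1$ forces $\alpha=1$.
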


\begin{proof}
We may start with the shape of $\varphi$ given by \eqref{eq:phi0}, but then with two or more fixed points on $\PP^1$, we have $\gamma=0$ as stated.  We now substitute \eqref{eq:phi0} into \eqref{eq:general} and consider the coefficient of $x^2$.  Decomposing into even and odd terms, we find that $d_3\equiv 0$.  This proves that $\varphi$ takes the shape of \eqref{eq:phi2}.

Now assume that $\alpha\neq 1$.  As argued above, this locates the multiple fibres at $t=0$, say, and $\infty$ in the classical case.  That is, up to normalizing, $g_2=t$, resp.\ $g_2=t^2$; in particular, $g_2$ is monomial, as claimed.  Comparing further coefficients upon the substitution, we  subsequently deduce 
\begin{itemize}
\item
from the coefficient of $y$ that $a_1$ is monomial or zero,
\item
from the coefficient of $x$ that $a_2$ is monomial or zero.
\end{itemize}
This completes the proof of Lemma~\ref{lem:aut2}
\end{proof}

One can retrieve further information from studying the constant term of the normal form and then imposing the invariance of the normal form under $\varphi$, but we will only pursue this for one example from Proposition~\ref{prop:numer} and for some special cases needed to prove Theorems~\ref{thm3} and~\ref{thm:ct}.

\begin{example}
Lemma~\ref{lem:aut2} directly applies to the numerically trivial automorphisms on the classical Enriques surfaces from Proposition~\ref{prop:numer} because they automatically preserve reducible fibres (and hence all multiple fibres).

For instance, for the fourth example from Proposition~\ref{prop:numer} with three reducible fibres, we deduce that any numerically trivial automorphism $\varphi$ acts trivially on the base, so $\alpha=\beta=\delta=1$ in \eqref{eq:phi2}.  But then, comparing coefficients upon substitution, we find that $\deg(b_2)\leq 1$ and $\deg(d_5)\leq 3$, eventually leading to the following three cases with $\varphi\neq\mathrm{id}$:
\[
(b_2,d_5) =
(0,(1+t)^3), \; (1+t, 1+t^2), \; (1+t, t+t^3).
\]
The first two involutions permute the components of the simple fibre of type $\III$ at $t=0$, so they are not numerically trivial.  Meanwhile, going through the resolution of singularities, the third involution is checked to be numerically trivial.  Note that by \cite[Corollary~7.2]{DM}, it is not cohomologically trivial.
\end{example}

\begin{remark}
We note again that the surfaces given by \eqref{eq:ct} have infinite automorphism group.
\end{remark}

\section{Finite automorphism groups of Enriques surfaces in characteristic 2}
\label{s:finite}

Enriques surfaces with finite automorphism groups are notorious because a general Enriques surface has infinite automorphism group (contrary to the case of K3 surfaces, for instance).  The property of having finite automorphism group can be described combinatorially, namely in the incidence graph $\Gamma$ of smooth rational curves -- which in particular has to be finite.  This was employed to give a full classification over $\mathbb C$ by Kond\=o \cite{Kondo-aut} and Nikulin \cite{Nikulin}, and in odd characteristic by Martin \cite{Martin}.  In fact, Martin's work also covers the case of singular Enriques surfaces, while all possible graphs $\Gamma$ have been determined for the cases of classical and supersingular Enriques surfaces in \cite{KKM}.  In \textit{op.~cit.}, the authors also provide examples for each $\Gamma$, but no uniqueness or irreducibility of moduli is claimed (except for the extra-special cases covered in \cite{Salomonsson}).  In particular, the precise finite automorphism groups could not be classified in most cases.  Theorem~\ref{thm3} remedies this based on our normal form arguments.

\subsection{Overall idea}

Given a graph $\Gamma$ of smooth rational curves, we identify a divisor of Kodaira type which induces a quasi-elliptic fibration on the Enriques surface.  In particular, this determines the shape of the relative Jacobian, so Ito's equations in \cite{I} can be translated back to our Enriques surfaces using Theorem~\ref{thm2}, in particular, using the explicit calculations from Section~\ref{ss:explicit}.  Ideally, this quasi-elliptic fibration already fixes all the curves in~$\Gamma$, but in three cases we also have to consider a second fibration to verify the findings from \cite{KKM} (see Sections~\ref{ss:62} and~\ref{ss:VIII}).  In particular, this will prove that the examples and automorphism groups in \cite{KKM} are essentially complete, and thus verify Theorem~\ref{thm3}.

In what follows, we go through all possible graphs $\Gamma$ one by one; throughout, we employ the notation and findings of \cite{KKM}.

\subsection{$\boldsymbol{\Gamma = \tilde E_7 + \tilde A_1^{(1)}}$}
\label{ss:711}

These Enriques surfaces admit a quasi-elliptic fibration with reducible fibres of types III$^*$ (multiple) and III (simple).  Locating these fibres at $t=0$ and $t=\infty$, respectively, we see that Table~\ref{tab2} leads to the normal form
\begin{eqnarray}
\label{eq:72-1}
y^2 + t^3g_1^2 y = tx^4 + t^3 c_1^4, 
\end{eqnarray}
where $\deg(g_1)=1$ (with classical Enriques surfaces if $t\nmid g_1$ and supersingular Enriques surfaces otherwise).  Hence we normalize $g_1=t+a, c_1= b(t+1)$ such that $b\neq 0$.  The $2$-dimensional classical family is given by $a\neq 0$, and the $1$-dimensional supersingular family occurs at $a=0$.  This confirms that the examples of \cite{KKM} are complete, with anticipated moduli dimensions, and so are the possible automorphism groups computed.

\subsection{$\boldsymbol{\Gamma = \tilde E_7 + \tilde A_1^{(2)}}$}
\label{ss:712}

The Enriques surfaces with this graph also admit a quasi-elliptic fibration with reducible fibres of types III$^*$ and III, but contrary to Section~\ref{ss:711}, the special graph $\Gamma = \tilde E_7 + \tilde A_1^{(2)}$ implies that  the III fibre is also multiple.  Thus this only concerns the classical case and leads to the second family from Proposition~\ref{prop:numer} (given by $g_1=1$ in terms of \eqref{eq:72-1}), again confirming \cite{KKM}.

\subsection{$\boldsymbol{\Gamma = \tilde E_8}$}

There is a quasi-elliptic fibration with multiple II$^*$ fibre (which we locate at $t=0$).  From Table~\ref{tab2}, we obtain a $2$-dimensional family of torsors given by
\begin{eqnarray}
\label{eq:711}
y^2 = tx^4 + t^4g_1^2 x + t^3 c_1^4 \quad (t\nmid g_2).
\end{eqnarray}
We derive a $1$-dimensional family of classical Enriques surfaces where $t\nmid g_1$ (whence we normalize $g_1=\beta$ (with $\beta\neq 0$), $c_1=1+t$), and a single supersingular Enriques surface at $g_1=t$.  This is again in perfect agreement with \cite{KKM}.

\subsection{$\boldsymbol{\Gamma = \tilde D_8}$}

This graph induces a quasi-elliptic fibration with a multiple fibre of type I$_4^*$ which we locate at $t=0$.  The $2$-dimensional family of classical Enriques surfaces thus appears in Proposition~\ref{prop:numer}; the $1$-dimensional family of supersingular Enriques surfaces is obtained from Table~\ref{tab2} by setting $g_2=t^2$ and normalizing $c_1=\gamma\in k^\times$ by the M\"obius transformation.  Then the shape of \eqref{eq:general} can be restored by an easy transformation in $x$ and $y$, resulting in the normal form (with $\beta=\gamma^4$)
\[
y^2 = tx^4+ t^5x^2 + t^8x +\beta t^3.
\]

\subsection{$\boldsymbol{\Gamma = \tilde D_4 + \tilde D_4}$}

These Enriques surfaces admit a quasi-elliptic fibration with two multiple fibres of type I$_0^*$.  Hence they are given by the $2$-dimensional family of classical Enriques surfaces in Proposition~\ref{prop:numer}.

\subsection{$\boldsymbol{\Gamma =  \text{VII}}$}

Among all possible configurations of smooth rational curves on classical and supersingular Enriques surfaces with finite automorphism group, this graph is singled out by the property that it only induces elliptic fibrations.  It follows from the classification of the fibrations in \cite[Appendix~A.1]{KKM} that the universal cover has 12 $A_1$ singularities, so the minimal resolution is the supersingular K3 surface of Artin invariant $\sigma=1$.  Its Enriques quotients (classical and supersingular) have been determined in \cite{Kondo}.  In particular, this confirms the findings of \cite{KKM}.

\subsection{$\boldsymbol{\Gamma = \tilde E_6 + \tilde A_2}$}
\label{ss:62}

This type was covered in \cite{Salomonsson}, but we decided to include an independent proof for completeness; it also serves as a good illustration of the interplay of our normal forms with concrete linear systems.

\subsubsection{Classical case}
\label{sss:cl}

In the classical case, these surfaces admit a quasi-elliptic fibration with multiple fibres of type $\III$ at $t=0$ and $\II$ at $t=\infty$, plus a simple fibre of type $\III^*$ at $t=1$.  Using Table~\ref{tab2}, we compute the equation
\begin{eqnarray}
\label{eq:E6A2c}
S: \;\; y^2 + (t+1)t^2y = tx^4 + t^3 x^2 + (t+1)t^4 x + at^3 (t+b)^4, 
\end{eqnarray}
where $ab\neq 0$.  This has curve of cusps $C$ at $\infty$, making for the following graph of smooth rational curves together with the fibre components:

\begin{figure}[ht!]
\setlength{\unitlength}{.4in}
\begin{picture}(7,4.5)(0,-.5)
\thinlines

\put(-.2,-.5){$\Theta_1$}
\put(.8,-.5){$\Theta_2$}
\put(1.8,-.5){$\Theta_3$}
\put(2.8,-.5){$\Theta_4$}
\put(3.8,-.5){$\Theta_3'$}
\put(4.8,-.5){$\Theta_2'$}
\put(5.8,-.5){$\Theta_1'$}

\put(3.2,.86){$\Theta_0$}
\put(3.2,1.86){$C$}
\put(3.2,2.86){$C_0$}
\put(3.2,3.86){$C_1$}

\put(0,0){\line(1,0){6}}
\multiput(0,0)(1,0){7}{\circle*{.15}}

\put(3,0){\line(0,1){3}}
\multiput(3,1)(0,1){4}{\circle*{.15}}

\put(2.925,3){\line(0,1){1}}
\put(3.075,3){\line(0,1){1}}

\end{picture}
\caption{Smooth rational fibre components and curve of cusps for $f$}
\label{Fig:E6A2}
\end{figure}
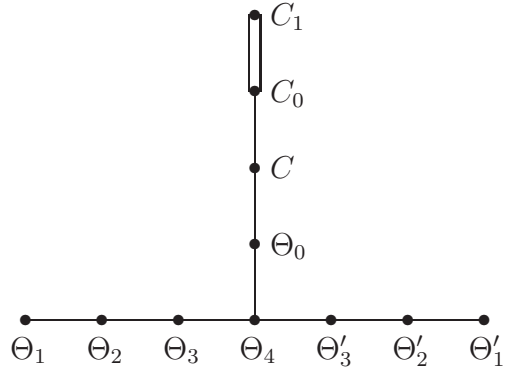

Note the divisor $D= C + 2\Theta_0 + 3\Theta_4 + 2 (\Theta_3+\Theta_3')+\Theta_2+\Theta_2'$ of Kodaira type $\IV^*$ central to the diagram.  We continue by exhibiting the linear system $|2D|$; by inspection of the diagram, we see that this will induce an elliptic fibration
\[
f'\colon S \lra \PP^1
\]
with multiple fibre of type $\IV^*$, nodal bisections $C_0, \Theta_1, \Theta_1'$ and another reducible fibre containing the curve $C_1$.

To make all of this explicit, we resolve the singularity in the fibre at $t=1$ as in the proof of Proposition~\ref{prop:ade}.  First we substitute $a=c^4$ and change variables $y=(t+1)y' + x^2 + tx + c^2t(t + b)^2$, amounting to the blow-up along $\Theta_0$ from Section~\ref{ss:1st}.  This produces the 4-fold fibre component
$$\Theta_4=(t+1=x + \gamma = 0),$$ 
where $\gamma^2 = (bc + c + 1)(b + 1)c$. 
After the  change of variables $x=x'+\gamma$,
the remaining pairs of fibre components $\Theta_i, \Theta_i'$ 
of multiplicity $i=1,2,3$  are successively uncovered in the three blow-ups
$t=x'^it_i+1$.
This shows that the function
\begin{eqnarray}
\label{eq:w}
w = \dfrac{(x+t\gamma)^2}{t(t+1)^2} = \dfrac{(1+\gamma t_1)^2}{(x't_1+1)t_1^2}
\end{eqnarray}
has pole divisor exactly $2D$ outside the multiple fibres at $t=0, \infty$.  Meanwhile, on the two multiple fibres of the original quasi-elliptic fibration, the resolution of singularities as in Section~\ref{s:res} shows that $w$ is regular and non-constant on the fibre component met by the curve of cusps.  In particular, on $C_0$, it gives a bisection, and we have $w=c^2b^2$ on the other fibre component $C_1$ at $t=0$.  That is, $C_1$ is a component of the (reducible) fibre of $f'$ at $w=c^2b^2$.

\begin{lemma}
The general member of the family of Enriques surfaces given by \eqref{eq:E6A2c} does not have finite automorphism group.
\end{lemma}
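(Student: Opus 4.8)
The plan is to use the second, elliptic fibration $f'$, which carries a multiple fibre of Kodaira type $\IV^*$ over $w=\infty$, to produce infinitely many automorphisms for a general member of \eqref{eq:E6A2c}. Since $S$ is an Enriques surface, its relative Jacobian $\Jac(f')$ is a rational elliptic surface by \cite{BM}; in particular $\Jac(f')$ has Picard number $10$, so the Shioda--Tate formula reads
\[
\operatorname{rank}\operatorname{MW}(\Jac(f')) = 8 - \sum_v (m_v-1),
\]
the sum running over the reducible fibres, with $m_v$ the number of components of the fibre over $v$. I would show that this rank is positive for general $(a,b)$. The Mordell--Weil group acts on $S$, viewed as a torsor under $\Jac(f')$, by fibrewise translation, and any section of infinite order then gives an automorphism of $S$ of infinite order preserving $f'$; hence $\Aut(S)$ is infinite.

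First I would locate the reducible fibres. The fibre over $w=\infty$ is the multiple $\IV^*$ fibre $D$, contributing $6$ to the sum. By the discussion around \eqref{eq:w}, the curve $C_1$ lies in the fibre over $w=c^2b^2$; being a smooth rational curve, it cannot be an entire fibre of a genus~$1$ fibration, so it is a proper component and that fibre is reducible, contributing at least $1$. This forces $\sum_v(m_v-1)\geq 7$, hence $\operatorname{rank}\operatorname{MW}(\Jac(f'))\leq 1$ for \emph{every} member. It therefore suffices to achieve the value $\sum_v(m_v-1)=7$ generically, \textit{i.e.}~to show that for general parameters the fibre through $C_1$ is exactly of type $\I_2$ (or $\III$) and that no further reducible fibre occurs.

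I would carry this out by passing to the generic fibre of $f'$ over $k(w)$, read off from \eqref{eq:E6A2c} together with the relation $(x+t\gamma)^2 = w\,t(t+1)^2$, bringing it into Weierstrass form (in the spirit of Section~\ref{s:rJ} and \cite{I}) and factoring its discriminant $\Delta(w)$. Any enhancement of the $C_1$-fibre to type $\IV$ or $\I_3$, or the appearance of a second reducible fibre, corresponds to $\Delta(w)$ acquiring an extra multiple root, which is a proper closed condition on $(a,b)$. Since the number of components of the fibres is upper semicontinuous in $(a,b)$, the Mordell--Weil rank is lower semicontinuous; thus it is enough to exhibit a single member of \eqref{eq:E6A2c} with $\sum_v(m_v-1)=7$, whereupon the general member automatically satisfies $\operatorname{rank}\operatorname{MW}(\Jac(f'))=1$ and $\Aut(S)$ is infinite. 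The special members with $\sum_v(m_v-1)=8$ are precisely those for which $f'$ is extremal, and these constitute the finite-automorphism subfamily \eqref{c4}.

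The main obstacle is the explicit determination of the fibre configuration of $\Jac(f')$, in particular the Weierstrass reduction in the coordinate $w$ and the factorization of $\Delta(w)$ needed to see that extremality is genuinely a positive-codimension condition rather than forced for all $(a,b)$. The semicontinuity argument above is precisely what reduces this to the production of one non-extremal member, so that the general case need not be treated symbolically.
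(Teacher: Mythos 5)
Your framework coincides with the paper's own proof: both arguments run through the elliptic fibration $f'$ induced by $|2D|$, the Shioda--Tate formula for the rational elliptic surface $\Jac(f')$, the fact that a section of infinite order acts on the torsor $S$ as an infinite-order automorphism, and the observation that the multiple $\IV^*$ fibre together with the reducible fibre containing $C_1$ forces $\sum_v(m_v-1)\geq 7$. Your semicontinuity reduction is also sound in principle: non-extremality of $\Jac(f')$ is stable under generization, so one non-extremal member would make the general member non-extremal.

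The genuine gap is that you never produce that one member, and nothing in your argument excludes the possibility that \emph{every} member of \eqref{eq:E6A2c} is extremal. Your assertion that an enhancement of the $C_1$-fibre, or the appearance of a further reducible fibre, ``is a proper closed condition on $(a,b)$'' is exactly the lemma restated -- closedness is easy, but properness is the entire content, and it cannot be obtained abstractly: extremal rational elliptic surfaces are not rigid in general (for instance the isotrivial surfaces with two $\I_0^*$ fibres move in a one-parameter family), so no moduli count shows the extremal locus is a proper subset; moreover, in characteristic $2$ wild ramification allows multiple roots of the discriminant at irreducible additive fibres, so the fibre configuration genuinely has to be computed. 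This computation, which you defer as ``the main obstacle,'' is precisely what the paper's proof consists of: substituting \eqref{eq:w} into the partially resolved equation, massaging it into the genus-$1$ curve \eqref{eq:w-spec} over $k(w)$, and factoring its discriminant as $\Delta'=w^{12}(w^2+w+c^4)(w+c^2b^2)^2$ (with $a=c^4$). For general $(b,c)$ the root $w=c^2b^2$ is exactly double and the roots of $w^2+w+c^4$ are simple, so the fibre through $C_1$ has type $\I_2$, no other reducible fibres occur, and $\operatorname{rank}\operatorname{MW}(\Jac(f'))=1$. Your closing sentence, identifying the extremal members with the subfamily \eqref{c4}, likewise presupposes this computation (carried out after the lemma in the paper, where the condition $c=b/(1+b)^2$ and the type $\I_3$ are established) rather than deriving it.
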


\begin{proof}
For the surface to fall into the finite automorphism group case $\Gamma = \tilde E_6 + \tilde A_2$, the second reducible fibre of the fibration $f'$ at $w=c^2b^2$ has to have type $\I_3$ or $\IV$.  Otherwise, the Jacobian would have positive Mordell--Weil rank by \cite{Lang, MWL}; thus it would induce an infinite-order automorphism on $S$.  We therefore continue by calculating the Kodaira type of this very fibre, depending on $b$ and $c$.  To this end, we substitute \eqref{eq:w} for $x'$ in the equation corresponding to \eqref{eq2}.  Rescaling the coordinate $y'$ by $h^2/t_1^5$ for $h=b^2\alpha^2 t_1 + b\alpha t_1 + \alpha^2 t_1 +\alpha t_1 + 1$, where $\alpha = \sqrt{c/(b+1)}$, we obtain an equation
\[
y''^2 + wt_1h^2 y'' = r \quad \text{with}\ h\in k[t_1,w] \text{ of degree $6$ in } t_1.
\]
This is not quite visibly a curve of arithmetic genus~$1$, but writing $r=r_0^2+r_1$, where $r_1$ is odd with respect to $t_1$ or $w_1$, the coordinate change $y'' = hu + r_0$ gives a curve of arithmetic genus~$1$ over $k(w)$:
\begin{eqnarray}
\label{eq:w-spec}
u^2 + wt_1hu = wR \quad \text{with}\ R\in k[t_1,w] \text{ of degree $4$ in } t_1.
\end{eqnarray}
This has discriminant
\[
\Delta' = w^{12} (w^2 + w + c^4)(w + c^2b^2)^2
\]
 vanishing generally to order $2$ at $w=c^2b^2$; since we already know that the fibre at $w=c^2b^2$ is reducible as it contains the smooth rational component $C_1$, this verifies the Kodaira type $\I_2$.  The lemma follows.
\end{proof}

In order to determine the subfamily with finite automorphism group, it remains to check when the fibre at $w=c^2b^2$ degenerates to Kodaira type $\I_3$.  The discriminant $\Delta'$ acquires a triple root at $w=c^2b^2$ if and only if $c=b/(1+b)^2$.  One verifies that the fibre type is indeed $\I_3$ by checking that the fibre of \eqref{eq:w-spec} at $w=c^2b^2$ always contains exactly one singular point (a surface singularity of type $A_1$ where $h$ vanishes), but it becomes reducible on the given subfamily.  (Alternatively, both fibre types $\I_2$ and $\I_3$ follow generally from the classification of wild ramification of additive fibres in characteristic $2$ in \cite{SSc} as this implies that $\Delta'$ has vanishing order at least $4$ at any additive fibre.)  With fibres of type $\IV^*$ and $\I_3$, the Jacobian of $f'$ has $\mathrm{MW}(\Jac(f'))\cong\Z/3\Z$.  But then the symmetry imposed by the induced order $3$ automorphism on $S$ implies that the curves in Figure~\ref{Fig:E6A2} augmented by the two additional components of the $\I_3$ fibre of $f'$ produce exactly the graph of smooth rational curves for type $\Gamma = \tilde E_6 + \tilde A_2$ from \cite{KKM}.  This gives the claimed irreducible $1$-dimensional family of classical Enriques surfaces with finite automorphism group from Theorem~\ref{thm3}.

\subsubsection{Supersingular case}
\label{sss:ss}

In the supersingular case,
the above set-up degenerates to a quasi-elliptic fibration with a single multiple fibre 
of type $\III$ at $t=0$
and another reducible fibre, simple as before, of type $\III^*$ at $t=1$.
From Table~\ref{tab2}, we obtain the equation
\begin{eqnarray}
\label{eq:E6A2s}
S: \;\; y^2 + (t+1)t^4y = tx^4 + t^5 x^2 + (t+1)t^7 x + ct^3 (t+1)^4, 
\end{eqnarray}
where $c\neq 0$.  Note that we used a different normalization than in \eqref{eq:supersingular} because the given one turns out to be much more convenient as it automatically leads to a unique Enriques surface.  The graph of smooth rational curves in Figure~\ref{Fig:E6A2} is still standing and so is the divisor $D$ of Kodaira type $\IV^*$, but now the linear system $|2D|$ is generated by
\[
u = \frac{x^2+c(t + 1)^3t}{t^2(t+1)^2}.
\]
In terms of the induced fibration $f'$, the smooth rational curve $C_1$ is a component of the fibre at $u=c$.  The fibre contains a single singular point which is never a surface singularity for any $c\neq 0$; its projectivized tangent cone consists of a single line unless $c=1$.  We now combine Kodaira's and Tate's classifications of singular fibres, see \cite{K, Tat}, with an Euler--Poincar\'e characteristic reasoning taking into account the multiple fibre of type $\IV^*$ at $u=\infty$.  Presently this implies that the fibre can only have type $\III$ for $c\neq 0,1$.  But then $S$ inherits an automorphism of infinite order from $\Jac(f')$ by the Shioda--Tate formula, ruling out all values $c\neq 0,1$.

On the other hand, for $c=1$, the fibre acquires three smooth rational irreducible components which meet in the singular point (with different tangent directions).  This verifies that the fibre has type $\IV$.  In particular, we have $\mbox{MW}(\Jac(f')) = \Z/3\Z$ by \cite{MWL}, and the symmetry imposed by the induced automorphism on $S$ exactly leads to the required graph $\Gamma$ of smooth rational curves.  Thus, at $c=1$, the Enriques surface $S$ has finite automorphism group.

\subsubsection{Conclusion for type $\boldsymbol{\Gamma = \tilde E_6 + \tilde A_2}$}
\label{sss:62-aut}

We have not explicitly identified our Enriques surfaces with type $\Gamma$ with those from \cite{KKM}.  For the supersingular case, it follows from the uniqueness of the surface in Section~\ref{sss:ss} that it has to agree with the one from \cite{KKM}.  Thus  the automorphism group also agrees (but it can also be computed directly using Lemma~\ref{lem:aut2}).

In the classical case, our $1$-dimensional family from Section~\ref{sss:cl} is irreducible and unique and thus contains the family from \cite{KKM}.  Instead of identifying the two families explicitly, we proceed by computing the automorphism group for the family from Section~\ref{sss:cl}.

The symmetry group of the graph $\Gamma$ is $\mathfrak S_3$, and these automorphisms are always induced from the Mordell--Weil groups of the Jacobians of genus~$1$ fibrations on $S$ (it suffices to consider those fibrations denoted by $f$ and $f'$ in Section~\ref{sss:cl}).

It remains to compute the numerically trivial automorphisms.  Necessarily, they preserve any genus~$1$ fibration on $S$, and they fix any reducible fibre (componentwise).  We can thus apply Lemma~\ref{lem:aut2} to the quasi-elliptic fibrations from \eqref{eq:E6A2c} with $a=b^4/(1+b)^8$ (family \eqref{c4} in Theorem~\ref{thm:c-eqn}) and consider the automorphism $\varphi$ given by \eqref{eq:phi2}.  Since $a_1$ is not monomial, Lemma~\ref{lem:aut2} implies that $\alpha=1$.  Substituting \eqref{eq:phi2} and comparing the coefficients of $y$ and $x^2$ gives $\delta=\beta=1$.  This leaves the constant coefficient, where vanishing orders at $0$ and at $\infty$ yield
 \[
 b_2 = b't,\quad d_5 = dt^2+d't^3 \quad (b',d,d'\in k).
 \]
 The three constants satisfy a system of three equations which is seen to have exactly two solutions:
\[
(b',d,d') \in \{(0,0,0), \; (0,1,1)\}.
\] 
The automorphism $\varphi$ corresponding to the second solution is an involution which acts non-trivially on $\Num(S)$ as it interchanges the two branches of the $\III^*$ fibre at $t=1$ (it is induced by translation by the $2$-torsion section on $\mathrm{MW}(\Jac(f))$).  Thus we conclude that $\Aut_{nt}(S)=\{\mathrm{id}\}$, confirming the examples from \cite{KKM}.

\subsection{$\boldsymbol{\Gamma = \text{VIII}}$}
\label{ss:VIII}

By \cite{KKM}, Enriques surfaces of this type admit a quasi-elliptic fibration with two multiple fibres of type $\III$ and a simple fibre of type $\I_2^*$.  By Table~\ref{tab2}, the normal form is given as
\begin{eqnarray}
\label{eq:VIII}
S: \;\; y^2 + t^2 (t+1) y = tx^4 + t^3 (at+b)^4 \quad (a,b\in k^*).
\end{eqnarray}
The diagram of reducible fibre components enriched by the curve of cusps $C$

\begin{figure}[ht!]
\setlength{\unitlength}{.4in}
\begin{picture}(5,4.4)(0,-0.2)
\thinlines

\multiput(0,0)(4,0){2}{\circle*{.15}}
\multiput(0,4)(4,0){2}{\circle*{.15}}
\multiput(4,3)(1,0){2}{\circle*{.15}}
\multiput(4,1)(1,0){2}{\circle*{.15}}
\multiput(2,2)(1,0){2}{\circle*{.15}}
\multiput(1,1)(0,2){2}{\circle*{.15}}

\put(2,2.2){$C$}
\put(2.7,1.55){$\Theta_0$}
\put(4,1.2){$\Theta_1'$}
\put(4,2.55){$\Theta_1$}
\put(1,3.2){$C_0$}
\put(1,.55){$C_\infty$}

\put(1,1){\line(1,1){1}}
\put(1,3){\line(1,-1){1}}
\put(2,2){\line(1,0){1}}
\put(3,2){\line(1,1){1}}
\put(3,2){\line(1,-1){1}}
\put(4,1){\line(1,0){1}}
\put(4,1){\line(0,-1){1}}
\put(4,3){\line(1,0){1}}
\put(4,3){\line(0,1){1}}

\put(0,0.075){\line(1,1){1}}
\put(0.075,0){\line(1,1){1}}

\put(0,3.925){\line(1,-1){1}}

\put(0.075,4){\line(1,-1){1}}
\end{picture}
\end{figure}

\noindent
features a divisor $D=C_0+C_\infty+2C+2\Theta_0+\Theta_1+\Theta_1'$ of Kodaira type $\I_1^*$.  It follows that $|2D|$ induces an elliptic fibration
\[
f'\colon S\lra\PP^1
\]
 with multiple fibre of type $\I_1^*$  and at least four smooth rational bisections
 as well as two smooth rational $4$-sections.
Explicitly, the fibration can be exhibited through the elliptic parameter
\[
u = \frac{(x^2 + tx + a^2t^3 + at^2 + bt^2 + b^2t)^2}{t^3(t+1)^2}
\]
which has pole divisor exactly $2D$.  The fibration $f'$ visibly has a double fibre at $u=0$, with smooth support.  In addition to the double $\I_1^*$ fibre at $u=\infty$, there are generally two reducible fibres, of Kodaira type $\I_2$ each, at $u=a^2$ and $u=b^2$.  It follows that $S$ has infinite automorphism group (induced from $\mbox{MW}(\Jac(f'))$ unless $a=b$.  To see that this exactly recovers the family
$$
   v^2 = tu^4 + at^2u^3 + at^3(t+1)^2u + t^3(t+1)^4 \quad (a \neq 0)
$$
from \cite[Appendix A.2(3)]{KKM}, one may apply the following birational transformation with  $c ={1}/{a^2}$: 
$$
x = {\frac{(t+1)v}{\sqrt{a}(u^2 + t(t+1)^2)}},\quad
y = {\frac{t^3(t+1)^3}{u^2 + t(t+1)^2}}.
$$

\subsection{Proof of Theorem~\ref{thm3}}
\label{ss:concl}

With all possible graphs of smooth rational curves covered, each type yields irreducible families of classical and supersingular Enriques surfaces of the expected dimensions.  These families agree with the examples exhibited in \cite{KKM}, or they contain the examples from \cite{KKM} for type $\Gamma=\tilde E_6+\tilde A_2$, but then we checked in Section~\ref{sss:62-aut} that the automorphism groups agree anyway.  The computations of the numerically (resp.\ cohomologically) trivial automorphism groups from \cite{KKM} remain valid (confirmed by \cite{DM, DM2}).  This completes the proof of Theorem~\ref{thm3}.  \qed

\subsection{Equations}

For the convenience of the reader, we collect normal forms for all quasi-elliptic Enriques surfaces with finite automorphism group.

\begin{theorem}
\label{thm:c-eqn}
The normal forms of quasi-elliptic classical Enriques surfaces with finite automorphism group and their numbers $n$ of moduli are given as follows:

\small{
\begin{table}[H]
  \centering
{\setlength{\extrarowheight}{1pt}
\begin{tabular}{|c|c|l|c|}
\hline
     &\rm Type & \quad {\rm Normal form} &  $n$
\\ \hline
{\rm (c1)} & $\tilde{E_8}$ & $y^2= tx^4 + at^5x + t^3(1+t)^4$ \quad $(a \neq 0)$ & 1
\\ \hline
\rule[0pt]{0pt}{\heightof{$\tilde{A_1}^{(1)}$}+.5ex}{\rm (c2)} & $\tilde{E_7}+\tilde{A_1}^{\!(1)}$ &$y^2 + at^3y = tx^4 + bt^5x + t^3(1+t)^4$ \quad $(a\neq 0, b\neq 0)$ & 2
\\ \hline
\rule[0pt]{0pt}{\heightof{$\tilde{A_1}^{(1)}$}+.5ex}{\rm (c3)} &$\tilde{E_7}+\tilde{A_1}^{\!(2)}$ & $y^2 + at^3y = tx^4 + t^3(1 + t)^4$ \quad $(a\neq 0)$ & 1
\\ \hline
\refstepcounter{foo}\otherlabel{c4}{c4}{\rm (c4)} & $\tilde{E}_6+\tilde{A_2}$ & $y^2 + (t+1)t^2y=tx^4 +t^3x^2 +(t+1)t^4x + t^3(t+b)^4b^4/(b+1)^8$ & 1 \\
&& 
\hfill$(b \neq 0,1)$  & \\
 \hline
{\rm (c5)} & $\tilde{D_8}$ & $y^2 = tx^4+ at^3x^2 + bt^3x + t^3(1+t)^4$\quad $(b \neq 0)$ &2
\\ \hline
{\rm (c6)} & $\tilde{D}_4 + \tilde{D}_4$  & $y^2 = tx^4 + at^3x^2  + bt^4x + t^3(1 + t)^4$\quad $(b \neq 0)$  &2
\\ \hline
{\rm (c8)} & {\rm VIII} & 
$y^2 + t^2(t+1)y = tx^4 + ct^3(1+t)^4$ \quad  $(c \neq 0)$  &1
\\ \hline
\end{tabular}}
\end{table}
}
\end{theorem}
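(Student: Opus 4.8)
The plan is to read Theorem~\ref{thm:c-eqn} as a compilation: for each graph $\Gamma$ of $(-2)$-curves admissible on a classical Enriques surface with finite automorphism group, I will exhibit the explicit normal form and verify the moduli count $n$. The admissible graphs are classified in \cite{KKM}, so the proof ranges over finitely many cases. Since the theorem concerns \emph{quasi-elliptic} surfaces, the graph VII---which by \cite[Appendix~A.1]{KKM} supports only elliptic fibrations---is excluded, explaining the absence of a row (c7); the surviving graphs are precisely (c1)--(c6) and (c8).

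For a given $\Gamma$, I would proceed in three steps. First, I identify from $\Gamma$ a divisor of suitable Kodaira type carrying a quasi-elliptic fibration, together with its configuration of multiple and simple reducible fibres. Second, this fibre configuration fixes the relative Jacobian within Ito's list in \cite{I}, so Theorem~\ref{thm2}---concretely, Table~\ref{tab2}---produces the Enriques torsor in the form \eqref{eq:general}, with the multiple fibres recorded by the roots of $g_2$. Third, I normalize: M\"obius transformations on $\PP^1$ place the distinguished fibres at $0$, $\infty$ and, when a third reducible fibre is present, at $1$, while the scalings and admissible transformations of Lemma~\ref{lem:iso} (and Lemma~\ref{lem:iso2}) eliminate the redundant scalars, and the number that survive is $n$. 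For the three graphs $\tilde E_7+\tilde A_1^{(2)}$, $\tilde D_8$ and $\tilde D_4+\tilde D_4$ this is already carried out in Proposition~\ref{prop:numer}, yielding (c3), (c5) and (c6) directly; the graphs $\tilde E_8$, $\tilde E_7+\tilde A_1^{(1)}$ and VIII are handled in their dedicated subsections and give (c1), (c2) and (c8) after the same normalization.

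The genuine obstacle is (c4), $\Gamma=\tilde E_6+\tilde A_2$, where the quasi-elliptic fibration does not pin down the surface: the general member of \eqref{eq:E6A2c} has infinite automorphism group. Here the argument of Section~\ref{ss:62} is indispensable. I pass to the auxiliary elliptic fibration $f'$ induced by $|2D|$ for the $\IV^*$-divisor $D$, read off the Kodaira type of its second reducible fibre (at $w=c^2b^2$) from the discriminant $\Delta'$, and show that only the relation $c=b/(1+b)^2$ forces this fibre to degenerate to type $\I_3$, whence $\mathrm{MW}(\Jac(f'))\cong\Z/3\Z$ and the induced order-$3$ symmetry completes the graph to $\tilde E_6+\tilde A_2$. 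Substituting $c=b/(1+b)^2$ (equivalently $a=b^4/(1+b)^8$) into \eqref{eq:E6A2c} yields the displayed equation, with the single surviving parameter $b\neq 0,1$ giving $n=1$. With (c4) settled, the remaining rows are a routine assembly of the computations already recorded in Sections~\ref{ss:711}--\ref{ss:VIII}, and a final comparison with \cite{KKM} confirms that these families exhaust all quasi-elliptic classical Enriques surfaces with finite automorphism group.
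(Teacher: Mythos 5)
Your proposal is correct and takes essentially the same approach as the paper: the paper's proof of Theorem~\ref{thm:c-eqn} consists precisely of the observation that every row was already derived in Sections~\ref{ss:711}--\ref{ss:VIII} (via Table~\ref{tab2}, Proposition~\ref{prop:numer} and the normalizations permitted by Lemmas~\ref{lem:iso} and~\ref{lem:iso2}), with the case $\Gamma=\tilde E_6+\tilde A_2$ requiring exactly the auxiliary fibration argument you single out. The only slight imprecision is calling (c8) a matter of ``the same normalization'': type VIII likewise needs the second fibration $f'$ to cut the two-parameter torsor family \eqref{eq:VIII} down to the locus $a=b$, but since you defer to Section~\ref{ss:VIII}, where this is carried out, the argument stands.
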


\begin{theorem}
\label{thm:s-eqn}
The normal forms of quasi-elliptic supersingular Enriques surfaces with finite automorphism group and their numbers $n$ of moduli are given as follows:

\begin{table}[!htb]
\centering
{\setlength{\extrarowheight}{1pt}
\begin{tabular}{|c|c|l|c|}
\hline
     &\rm Type & \quad {\rm Normal form} &  $n$
\\ \hline
{\rm (s1)} & $\tilde{E_8}$ & $y^2= tx^4 + x + t^7$  & 0
\\ \hline
\rule[0pt]{0pt}{\heightof{$\tilde{A_1}^{(1)}$}+.5ex}{\rm (s2)} & $\tilde{E_7}+\tilde{A_1}^{\!(1)}$ & $y^2 + y = tx^4 + ax + t^7$ \quad $(a\neq 0)$&1
\\ \hline
 \refstepcounter{foo}\otherlabel{s3}{s3}{\rm (s3)} 
   & $\tilde{E}_6+\tilde{A_2}$ & 
$y^2 + t^4 y = tx^4 + t^3$ & 0
\\  \hline

{\rm (s4)} & $\tilde{D_8}$ & $y^2 = tx^4+ tx^2 + ax +t^7$\quad $(a \neq 0)$ &1
\\ \hline
\end{tabular}}
\end{table}
\end{theorem}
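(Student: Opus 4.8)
The plan is to treat Theorem~\ref{thm:s-eqn} as the supersingular counterpart of Theorem~\ref{thm:c-eqn}, assembling its four entries from the case-by-case analysis carried out in Section~\ref{s:finite}. First I would recall from \cite{KKM} the complete list of graphs $\Gamma$ realizing supersingular Enriques surfaces with finite automorphism group and single out those that are quasi-elliptic. The graph VII is excluded immediately, as it supports only elliptic fibrations. For the remaining graphs the decisive input is that a supersingular Enriques surface carries exactly one multiple fibre (Section~\ref{s:basics}), equivalently that the polynomial $g_2$ in \eqref{eq:general} is a square (Theorem~\ref{rem:both}, \textit{cf.}~Proposition~\ref{prop:multi}). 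This lets me discard the types $\tilde E_7 + \tilde A_1^{(2)}$, $\tilde D_4 + \tilde D_4$ and VIII, whose associated quasi-elliptic fibrations were shown in Section~\ref{s:finite} to carry two multiple fibres and hence to force the classical case, leaving precisely the four types $\tilde E_8$, $\tilde E_7 + \tilde A_1^{(1)}$, $\tilde E_6 + \tilde A_2$ and $\tilde D_8$ recorded in the table.

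For each of these four I would extract the supersingular specialization from the relevant subsection and reduce it to the stated normal form. From \eqref{eq:711} the type $\tilde E_8$ specializes at $g_1 = t$; from \eqref{eq:72-1} the type $\tilde E_7 + \tilde A_1^{(1)}$ specializes at $t \mid g_1$; the type $\tilde D_8$ was already written in Section~\ref{s:finite} in the form $y^2 = tx^4 + t^5x^2 + t^8x + \beta t^3$; and the type $\tilde E_6 + \tilde A_2$ is the unique surface obtained at $c=1$ in Section~\ref{sss:ss}. In each case I would then pass to the table shape by the residual weighted scaling $(x,y,t) \mapsto (\alpha x, \alpha^3 y, \alpha^2 t)$ recorded after Theorem~\ref{thm}, combined with a M\"obius transformation of the base normalizing the position of the single multiple fibre and that of the remaining reducible fibre; that the resulting equations still define Enriques surfaces is guaranteed by Theorem~\ref{rem:both}. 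The modulus count is then read off from the parameters surviving these normalizations: the scaling and M\"obius freedom exhaust all continuous parameters for (s1) and (s3), yielding unique surfaces, whereas a single coefficient $a$ persists for (s2) and (s4); in the latter two one also checks that $a \neq 0$ is forced in order to retain the prescribed configuration of reducible fibres.

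The main obstacle I anticipate is bookkeeping rather than conceptual: the table equations are considerably more compact than the forms produced in Section~\ref{s:finite}, so the real work is to exhibit the explicit coordinate changes realizing these simplifications while keeping precise track of which symmetries have already been consumed. This is most delicate for entry (s3), since in Section~\ref{sss:ss} the $\tilde E_6 + \tilde A_2$ surface was deliberately presented in a non-standard normalization (chosen to make uniqueness transparent); reconciling $y^2 + (t+1)t^4 y = tx^4 + t^5 x^2 + (t+1)t^7 x + t^3(t+1)^4$ with the compact shape $y^2 + t^4 y = tx^4 + t^3$ of \eqref{eq:supersingular} calls for an explicit isomorphism of the form permitted by Lemma~\ref{lem:iso2}. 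Because that surface is already known to be unique, I would prefer to avoid the direct computation and instead verify that $y^2 + t^4 y = tx^4 + t^3$ is itself a supersingular Enriques surface whose quasi-elliptic fibration realizes the $\tilde E_6 + \tilde A_2$ configuration, so that uniqueness identifies it with the surface of Section~\ref{sss:ss}. A comparable but lighter check is needed to confirm that the scalings spent on (s1) and (s3) genuinely leave no modulus, justifying the value $n = 0$ in those rows.
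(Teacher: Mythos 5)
Your overall route---assembling the table from the case-by-case analysis of Section~\ref{s:finite}, passing to the compact forms by the residual scalings and M\"obius transformations, and handling (s3) via uniqueness---is essentially the paper's own proof, which consists of the remark that all equations were derived in equivalent form before, with (s3) obtained from \eqref{eq:E6A2s} at $c=1$ by moving the $\III^*$ fibre to $\infty$ plus a coordinate change restoring the shape of \eqref{eq:general}. Your alternative for (s3) (verify that $y^2+t^4y=tx^4+t^3$ itself realizes the $\tilde E_6+\tilde A_2$ configuration and invoke uniqueness) is legitimate and is in fact the same mechanism the paper uses in Section~\ref{sss:62-aut}; just be aware it is more laborious than the direct transformation, since ``realizing the configuration'' requires redoing the auxiliary fibration $f'$ and its type $\IV$ fibre on the compact equation, not merely checking the $\III+\III^*$ fibre pattern.

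There is, however, one step in your plan that fails as stated: the claim that for (s2) ``$a\neq 0$ is forced in order to retain the prescribed configuration of reducible fibres.'' Set $a=0$ in (s2): the equation $y^2+y=tx^4+t^7$, read in the chart at infinity ($x=ut^2$, $y=vt^5$, $s=1/t$), becomes $v^2+s^5v=su^4+s^3$, which is \eqref{eq:supersingular} with $a_1=s\not\equiv 0$, $a_0=a_2=0$. By Theorem~\ref{rem:both} this is still a supersingular Enriques surface, and its Jacobian $Y^2=X^3+s^3X$ still has a $\III^*$ fibre at $s=0$ (multiple on $S$) and a $\III$ fibre at $s=\infty$; the simple $\III$ fibre merely slides to $t=0$. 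So the prescribed configuration \emph{is} retained at $a=0$, and your check cannot exclude this value; carried out naively it would either wrongly enlarge the family or leave you stuck. The true content of the condition is different: under the M\"obius identification with the family of Section~\ref{ss:711} (supersingular locus $y^2+t^5y=tx^4+b^4t^3(t+1)^4$, $b\neq 0$, from \eqref{eq:72-1}), the point $t=a^4$ is the position of the simple $\III$ fibre while $t=0$ is the zero of $c_1$, so $a=0$ is exactly the degenerate position where the zero of $c_1$ collides with the $\III$ fibre---a case silently excluded by the normalization $c_1=b(t+1)$ in Section~\ref{ss:711}, and one which must be ruled out by an argument about extra $(-2)$-curves or by appeal to the irreducibility statement of Theorem~\ref{thm3} and \cite{KKM}, not by the fibre configuration. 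Note also that for (s4) your check succeeds but for a different reason than the one you give: at $a=0$ the right-hand side becomes $t(x^2+x+t^3)^2$, so $(a_1,a_2)\equiv(0,0)$ and the equation is geometrically reducible---it is not an Enriques surface losing its $\I_4^*$ fibre, it is not an Enriques surface at all.
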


\begin{proof}
All equations have been derived in equivalent form before, only \eqref{s3} meriting some explanation.  The given equation results from $c=1$ in \eqref{eq:E6A2s} by moving the $\III^*$ fibre to $\infty$ combined with an easy coordinate transformation used to recover the general shape of \eqref{eq:general}.
\end{proof}

\section{Proofs of Theorem~\ref{thm:ct} and Corollary~\ref{cor:ct}}

\subsection{Proof of Theorem~\ref{thm:ct}}

Let $\varphi$ be a cohomologically trivial automorphism of odd order $n>1$ acting on an Enriques surface $S$.  Until this paper, the only examples originated from the supersingular Enriques surfaces with finite automorphism groups pioneered in \cite{KKM}.  By Theorem~\ref{thm3}, the examples from \cite{KKM} are complete, so there are no more cohomologically trivial automorphisms of odd order $n>1$ acting on Enriques surfaces with finite automorphism group.  But then it follows from \cite[Section~7]{DM} that the only case left is $n=3$.  Concretely, by \cite[Proposition~7.9]{DM}, the Enriques surface $S$ admits a configuration of rational curves amounting to a quasi-elliptic fibration with two fibres of type I$_0^*$, exactly one of which is multiple.  Moreover, the cohomologically trivial action implies that $\varphi$ preserves any genus~$1$ fibration on $S$; more precisely, by \cite[Lemma~7.5]{DM}, $\varphi$ acts non-trivially on the base of the fibration.

By Table~\ref{tab2}, we may assume that $g_2=t^2$ and $S$ is given by
\begin{eqnarray}
\label{eq:2D4-3}
S: \;\; y^2 = tx^4 + \alpha t^5 x^2 + t^7 x + t^3 c_1^4 \quad (\alpha\in k).
\end{eqnarray}
Since we know that $t\nmid c_1$, we can normalize $c_1=1+\eta t$ to obtain a $2$-dimensional family of Enriques surfaces.  By assumption, $\varphi$ preserves the curve of cusps and each irreducible fibre component of the two I$^*_0$ fibres.  By Lemma~\ref{lem:aut2}, the automorphism is given by
\begin{eqnarray}
\label{eq:phi3}
\varphi\colon 
(x,y,t) \longmapsto (\beta x + b_2, \delta y + d_5, \zeta t), 
\end{eqnarray}
where $\zeta$ is a primitive third root of unity.  Upon substituting, we compare coefficients of the constant term to deduce that $b_2=\beta't^2 \, (\beta'\in k)$ and $d_5\equiv 0$.  But then the coefficients of $t^3$ and $t^7$ combine for $\zeta^3t^3(1+\zeta\eta t)^4$.  This is proportional to the original term $t^3c_1^4$ if and only if $\eta=0$, leading to the family of supersingular Enriques surfaces \eqref{eq:ct} from Theorem~\ref{thm:ct}.  Finally, the coefficient at $t^9$ gives
\begin{eqnarray}\label{eq:beta'}
\beta'(\beta'^3+\alpha\zeta\beta'+1) = 0.
\end{eqnarray}
Solving for \eqref{eq:ct} to be preserved by $\varphi$ leads to
\[
\beta=\zeta^2, \quad \delta=1. 
\]
Note that for choices $\beta'\neq 0$ in \eqref{eq:beta'}, the order of $\varphi$ is even, so we are left with $\beta'=0$, giving the order $3$ automorphism
\begin{eqnarray}
\label{eq:zeta^3}
\varphi\colon (x,y,t) \longmapsto (
\zeta^2 x, y,
\zeta t)
\end{eqnarray}
displayed in Theorem~\ref{thm:ct}.  It remains to prove that the action of $\varphi$ on the Enriques surfaces given by \eqref{eq:ct} is indeed cohomologically trivial.  This is easily checked by resolving the singularities as explained in Sections~\ref{s:ade} and~\ref{s:res}. 

\begin{remark}
The same approach shows that the special Enriques surface at $\alpha=0$ admits an automorphism $\psi$ of order $9$, with $\zeta$ a primitive ninth root of unity.  However, $\psi$ is verified to permute three of the simple fibre components at $t=0$, and considering $\psi^3$ reduces to the automorphism $\varphi$ from \eqref{eq:zeta^3}.

\end{remark}

\subsection{Proof of Corollary~\ref{cor:ct}}

The results of Corollary~\ref{cor:ct} can be found in \cite{DM} with two restrictions:
\begin{enumerate}[(1)]
\item
The possible lists of numerically trivial subgroups of the automorphism group in \eqref{c:ct-3} and \eqref{c:ct-4} rely on the validity of the computations of \cite{KKM} for all classical and supersingular Enriques surfaces with finite automorphism groups which we confirmed in Theorem~\ref{thm3}. 
\item
The appearance of $\Z/3\Z$ as a group of numerically trivial automorphisms depends on Theorem~\ref{thm:ct}.
\end{enumerate}
Thus Corollary~\ref{cor:ct} is proved in its entirety.
\qed


\newcommand{\etalchar}[1]{$^{#1}$}

\end{document}